\theoremstyle{plain}
\newtheorem{thm}{Theorem}[section]
\newtheorem{thm*}{Theorem}[section]
\newtheorem{cor}[thm]{Corollary}
\newtheorem{prop}[thm]{Proposition}
\newtheorem{lemma}[thm]{Lemma}
\newtheorem{lemma*}{Lemma}
\newtheorem{construction}[thm]{Construction}
\theoremstyle{definition}
\newtheorem{defn}[thm]{Definition}
\newtheorem{remark}[thm]{Remark}
\newtheorem*{remark*}{Remark}
\newtheorem{ex}[thm]{Example}
\newtheorem{notation}[thm]{Notation}
\newtheorem{question*}{Question}
\numberwithin{equation}{thm}
\newcommand{\cN}{\mathcal N}
\newcommand{\cU}{\mathcal U}
\def\Spec{\operatorname{Spec}\nolimits}
\def\Coker{\operatorname{Coker}\nolimits}
\def\Proj{\operatorname{Proj}\nolimits}
\newcommand{\cKer}{\mathcal K\text{\it er}}
\newcommand{\cCoker}{\mathcal C\text{\it oker}}
\newcommand{\cIm}{\mathcal I\text{\it m}}
\newcommand{\bG}{\mathbb G}
\newcommand{\cO}{\mathcal O}
\newcommand{\cY}{\mathcal Y}
\newcommand{\bA}{\mathbb A}
\newcommand{\bP}{\mathbb P}
\newcommand{\bZ}{\mathbb Z}
\newcommand{\cC}{\mathcal C}
\newcommand{\cE}{\mathcal E}
\newcommand{\fp}{\mathfrak p}
\newcommand{\fg}{\mathfrak g}
\newcommand{\fh}{\mathfrak h}
\newcommand{\fu}{\mathfrak u}
\newcommand{\p}{\mathfrak p}
\newcommand{\q}{\mathfrak q}
\newcommand{\ol}{\overline}
\newcommand{\ul}{\underline}
\def\Spec{\operatorname{Spec}\nolimits}
\def\sl2{\operatorname{SL_{2(2)}}\nolimits}
\def\Ga2{\operatorname{\mathbb G_{\rm a(2)}}\nolimits}
\newcommand{\bH}{\mathbb H}
\newcommand{\bN}{\mathbb N}
\newcommand{\A}{\mathcal A}
\newcommand{\bu}{\bullet}
\date\today
\begin{document}

 \title[Invariants for $\bG_{(r)}$-modules]
 {Invariants for $\bG_{(r)}$-modules} 

%
 
 \author[ Eric M. Friedlander]
{Eric M. Friedlander$^{*}$} 

\address {Department of Mathematics, University of Southern California,
Los Angeles, CA 90089}
\email{ericmf@usc.edu}

\thanks{$^{*}$ partially supported by the Simons Foundation }

\subjclass[2020]{20G05, 14L17}

\keywords{Jordan type, infinitesimal group scheme, universal operators}

\begin{abstract}
We revisit the constructions given by J. Pevtsova and the author
of refined invariants for finite dimensional representations of infinitesimal group 
schemes $\bG_{(r)}$ over a field $k$ of characteristic $p>0$. 
Our focus is on the universal $p$-nilpotent operator seen as an element in the group 
algebra of the group scheme $\bG_{(r),X}$ over $X$, where $X$ is either the 
moduli space $V_r(\bG)$ of height $r$ $1$-parameter subgroups of $\bG$ or the
moduli space $\cC_r(\cN_p(\fg))$ of $r$-tuples of $p$-nilpotent, pair-wise commuting
elements of the Lie algebra of $\bG$.

We formalize Jordan type function using several variants of the continuous function
$JT_{\bG,r,M}(-): \bP V_r(\bG) \to \cY$ where $\cY$ is the poset of Young diagrams
with $p$-columns.  One of these variants is designed to be more conducive
to computation.  The vector bundle construction given by J. Pevtsova and
the author is extended to all finite dimensional $\bG_{(r)}$-modules, producing coherent 
sheaves on $X$ which are locally free on the strata of $X$ associated to $JT_{\bG,r,M}(-)$.

%
%
%
\end{abstract}

\maketitle


\section{Introduction}

In the present work, we utilize methods and results of support theory to investigate explicit 
invariants for specific finite dimensional modules  for certain classes of infinitesimal group schemes
over an arbitrary base field $k$ of characteristic $p$.   
This is in contrast to the role of support theory in establishing properties for various categories
of representations of  finite group schemes as in \cite{BIKP}.   Our techniques do not apply to finite groups,
for we utilize the support theory developed in collaboration with A. Suslin and C. Bendel
in \cite{SFB1}, \cite{SFB2} which replaces cohomological varieties by varieties of 1-parameter subgroups.

We further investigate the ``Jordan type function" introduced by J. Pevtsova 
and the author in \cite{FP2} and \cite{FP3}.
For a given affine group scheme $\bG$ of finite type over $k$ 
and a finite dimensional $\bG$-module $M$ for the $r$-th Frobenius kernel $\bG_{(r)}$ of $\bG$, 
we formulate the continuous function \ $JT_{\bG,r,M}(-): \bP V_r(\bG) \to \cY$  from the 
scheme-theoretic points of $\bP V_r(\bG)$ representing height $r$ 1-parameter subgroups 
to the partially ordered set $\cY$ of Young diagrams with $p$ columns.  By definition,
$JT_{\bG,r,M}(-) = JT_{\bG_{(r)},r,M}(-)$.  The continuity of 
$JT_{\bG,r,M}(-)$ implies that if a point $x \in \bP V_r(\bG)$ specializes
to $y  \in \bP V_r(\bG)$ then the Young diagram $JT_{\bG,r,M}(x)$ is greater or equal to
the Young diagram $JT_{\bG,r,M}(y)$.

This ``theory" $M \mapsto JT_{\bG,r,M}(-)$ constitutes a functor $JT_{\bG,r}: mod(\bG_{(r)}) \to 
Hom_{cont}(\bP V_r(\bG),\cY)$ from finite dimensional $\bG_{(r)}$-modules to continuous functions
$\bP V_r(\bG) \to \cY$ which captures much more information about a
$\bG_{(r)}$-module $M$ than does the cohomological support theory $M \mapsto \bP H^*(\bG_{(r)},k)_M$.
For example, if $p$ does not divide the dimension of $M$, then $\bP H^*(\bG_{(r)},k)_M$ equals 
$\bP H^*(\bG_{(r)},k)$ and therefore is independent of $M$, 
whereas $M \ \mapsto \ JT_{\bG,r,M}(-)$ distinguishes many such $\bG_{(r)}$-modules $M$ of a given 
dimension.  As might be expected, determination of $JT_{\bG,r,M}(-)$ is a challenge.  With this
in mind, we introduce a simplified Jordan type function $M \ \mapsto \ JT^{exp}_{\fg,r,M}(-)$
for affine group schemes $\bG$ which are of {\it exponential type of height $r$}.

After an elementary discussion of Jordan types, we provide a detailed formalism for the
construction of the ``universal $p$-nilpotent operator" $\Theta_{\bG,r}  \in 
k[V_r(\bG)]\otimes k\bG_{(r)} \equiv k\bG_{(r),V_r(\bG)}$ of height $r$
constructed by J. Pevtsova and the author in \cite{FP3}.   In doing so, we expand upon the 
connection between 1-parameter subgroups for $\bG$ and operators on $\bG_{(r)}$-modules.
The operator $\Theta_{\bG,r}$ acting on $k[V_r(\bG)]\otimes M$ determines the Jordan 
type function $JT^{exp}_{\fg,r,M}(-)$.
The categorically inclined reader might want to categorify the functor $JT_{\bG,r}$
by replacing $Hom_{cont}(\bP V_r(\bG),\cY)$ by the category of graded $k[V_r(\bG)]$-modules
equipped with a $p$-nilpotent homogeneous operator.

In order to make the Jordan type function more explicit and computationally accessible,
we restrict our attention to affine group schemes $\bG$ which are of
{\it exponential type of height $r$}.  By definition, such groups $\bG$ are equipped with an 
exponential map with the property that   height $r$ 1-parameter subgroups of 
$\bG$ are given by applying the exponential map to $r$-tuples of $p$-nilpotent, pair-wise 
commuting elements of the Lie algebra $\fg$ of $\bG$.   Thus, the ``moduli space"
for height $r$ 1-parameter subgroups of $\bG$ (of exponential type $r$) is the variety $\cC_r(\cN_p(\fg))$
of such $r$-tuples of elements in $\fg$.  For $\bG = GL_N$, the associated exponential
map is the usual (truncated) exponential sending a $p$-nilpotent element of $A\otimes \fg l_N$ to
its exponential in $GL_N(A)$ for any commutative $k$-algebra $A$.

For a given finite dimensional $\bG_{(r)}$-module $M$ with $\bG$ of exponential type
of height $r$, the Jordan type function becomes a continuous
function $JT_{\fg,r,M}(-): \bP \cC_r(\cN_p(\fg)) \to \cY$ associated to the ``projectivization" 
of the universal $p$-nilpotent operator $\Theta_{\fg,r} \in k[\cC_r(\cN_p\fg))]\otimes k\bG_{(r)}$
corresponding to $\Theta_{\bG,r}$.  Our exponential Jordan type function 
$JT_{\fg,r,M}^{exp}(-):  \bP \cC_r(\cN_p(\fg)) \to \cY$ is defined by  replacing $\Theta_{\fg,r}$
by its linear part (denoted $\Theta_{\fg,r}^{exp}$) with respect to a natural grading on 
the group algebra $k\bG_{a(r)}$ of $\bG_{a(r)}$.  In the very special case that $\bG$ equals $\bG_{a(r)}$, this 
simplification is related to J. Carlson's use of cyclic shifted subgroups to identify support
varieties for $(\bZ/p)^{\times r}$-modules (see \cite{C}).   

The suitability of $\Theta_{\fg,r}^{exp}$ follows from work of Pevtsova, Suslin,
and the author \cite{FPS} and subsequent work of P. Sobaje \cite{Sob}.
Namely,  for a $\bG_{(r)}$-module $M$ of dimension $m$ and a point of $x \in \bP \cC_r(\cN_p(\fg)$),
$JT_{\fg,r,M}(x) \ = \ m/p\cdot [p]$ if and only if $JT^{exp}_{\fg,r,M}(x) \ = \ m/p\cdot [p]$.
Thus, the replacement of $JT_{\bG,r,M}(-)$ by $JT_{\fg,r,M}^{exp}(-): \bP \cC_r(\cN_p(\fg)) \to \cY$ 
does not affect (refined) support varieties considered in previous work of Pevtsova 
and the author in \cite{FP2} and \cite{FP3}.  
We provide sample computations which demonstrate
the accessibility of $JT_{\fg,r,M}^{exp}(-)$ for certain $\bG_{(r)}$-modules $M$.

For $\bG$ of exponential type of height $r$, we verify that vector bundles on $\bP \cC_r(\cN_p(\fg))$
associated to $\bG_{(r)}$-modules $M$ of constant Jordan type using $\Theta_{\fg,r}$
(naturally corresponding to vector bundles previously constructed in \cite{FP3})
 are $\bA^1$-homotopy equivalent to vector bundles using $\Theta_{\fg,r}^{exp}$.
We extend these constructions of vector bundles to apply to arbitrary finite dimensional 
$\bG_{(r)}$-modules $M$, yielding coherent sheaves on $\bP \cC_r(\cN_p(\fg))$
whose restrictions on strata associated to the Jordan type function for $M$ are locally free.
Unlike the situation for modules of 
constant Jordan type, these stratified coherent sheaves associated to a given finite
dimensional $\bG_{(r)}$-module will typically depend upon  whether we use the Jordan
type function associated to $ \Theta_{\bG,r}$ or associated to $\Theta_{\fg,r}^{exp}$.

As we observe in the last section, the issue  of ``stabilization with respect to $r$" of Jordan type 
functions of height $\leq r$ is closely related to the Jordan type function for finite 
dimensional rational $\bG$-modules introduced in \cite{F15} if 
$\bG$ is of exponential type of infinite height.  This corrects a minor error of \cite[Prop 4.8]{F15}
which incorrectly formulated such a stabilization result.

We thank Julia Pevtsova and Paul Sobaje for their insights into the mathematics we present.
\vskip .2in


\section{Jordan types and specialization}
\label{sec:JT}

We begin by recalling some basic properties of Jordan forms of $p$-nilpotent endomorphisms.

\begin{defn}
\label{defn:Jordan-form}
Let $M$ be a $k[t]/t^p$-module of dimension $m$ given by the map $\rho_M: k[t]/t^p \to M_{m,m}(k)$
of $k$-algebras.  Let $\ul a = (a_1,\ldots,a_p) \in \bN^{\times p}$ (i.e., a $p$-tuple of non-negative
integers) be defined by setting $a_i$ to be the
number of blocks of size $i$ of the Jordan canonical form of the matrix $\rho_M(t)$.  We say that
the Jordan type of $M$ is $\ul a$, expressing this as $JT(M) = \ul a$ or as 
$JT(M) = a_p[p] + a_{p-1}[p-1] + \cdots + a_1[1]$.
\end{defn}

\vskip .1in

The following proposition recalls familiar interpretations of Jordan types.

\begin{prop}
\label{prop:Young}
Let $M$ be a $k[t]/t^p$-module of dimension $m$ such that $JT(M) = \ul a $.
\begin{enumerate}
\item
As a $k[t]/t^p$-module, $M$ is isomorphic to \ $\bigoplus_{i= 1}^p (k[t]/t^i)^{\oplus a_i}$. 
\item
The partition of $m = \sum_{i=1}^p a_i\cdot i$ with $a_i$ subsets of size $i$ 
is naturally associated to $\ul a =  (a_1,\ldots,a_p)$.
\item
The usual partial ordering of partitions determines the
partial ordering on $\bN^{\times p}$ given by
$$\ul a \ \leq \ \ul b \quad \iff \quad \forall s, 1 \leq s \leq p, \quad
(\sum_{i=s}^p a_i\cdot i) \leq (\sum_{i=s}^p b_i\cdot i).$$
\item
Uniquely associated to $\ul a$ is a Young diagram with $p$-columns such that
the diagram has $a_i$ rows with $i$ boxes and the rows of the diagram are
 left-adjusted with descending length.  Thus, the partial ordering 
$\ul a \ \leq \ \ul b$ requires for each $\ell$ that the number of boxes in the top $\ell$ 
rows of $\ul a$ is less or equal to the number of boxes in the top $\ell$ rows of $\ul b$.
\item
The Jordan type of $t^j$ acting on $M \simeq  \oplus_{i= 1}^p (k[t]/t^i)^{\oplus a_i}$
(denoted by $\ul a^j \in \bN^{\times p}$)
has associated Young diagram obtained from the Young diagram for $\ul a$
by removing the first (i.e., left-most) $j-1$ columns of $\ul a$ and adding rows with 
each having only 1 box so that the number of boxes of $\ul a^j$ equals $m$.
 \end{enumerate}
\end{prop}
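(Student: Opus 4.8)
The plan is to prove all five parts by reducing everything to the structure theorem for finitely generated modules over the principal ideal domain $k[t]$, specialized to the truncated ring $k[t]/t^p$, and then to track what happens to the associated Young diagram under each operation. First I would establish (1): since $k[t]$ is a PID and $k[t]/t^p$ is a quotient, any finite-dimensional $k[t]/t^p$-module $M$ decomposes (by the classification of modules over $k[t]$, pulling back along $k[t]\twoheadrightarrow k[t]/t^p$) as a direct sum of cyclic modules $k[t]/t^{i}$ with $1\le i\le p$; the multiplicity of $k[t]/t^i$ is exactly the number of Jordan blocks of size $i$ of $\rho_M(t)$, because a single Jordan block of size $i$ with eigenvalue $0$ is precisely the regular representation of $k[t]$ on $k[t]/t^i$. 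This is essentially a restatement of the Jordan canonical form theorem, and I would just cite it, noting that the only eigenvalue available is $0$ since $t$ is nilpotent. Parts (2) and (3) are then bookkeeping: the identity $m=\sum_i a_i\cdot i$ follows by counting dimensions in the decomposition of (1), and the equivalence of the two partial orders is the standard ``dominance order'' comparison — one checks that $\sum_{i\ge s}a_i\cdot i$ counts the number of boxes in all columns of index $\ge s$, equivalently (after transposing) relates to partial sums of the conjugate partition, so the displayed inequalities are exactly the dominance condition on partitions.

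Next I would handle (4), the passage between the tuple $\underline a$ and a Young diagram. Given $\underline a$, form the diagram with $a_i$ rows of length $i$, sorted into weakly decreasing order and left-justified; this is a well-defined partition of $m$ by (2), and the assignment $\underline a\leftrightarrow$ diagram is a bijection since the $a_i$ are recovered as row-length multiplicities. For the reformulation of the ordering: the number of boxes in the top $\ell$ rows of the diagram associated to $\underline a$ is $\sum$ of the $\ell$ largest parts; I would show this equals a max over the relevant partial sums and then verify that the condition ``top-$\ell$-row counts are $\le$'' for all $\ell$ is equivalent to the condition in (3). The cleanest route is to transpose: the quantity $\sum_{i\ge s}a_i\cdot i$ equals the total number of boxes lying in columns $s, s+1,\dots,p$, and the ``top $\ell$ rows'' condition is the transpose statement about columns; since dominance order is preserved under conjugation of partitions, the two formulations agree. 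I'd keep this brief, as it is a classical fact about Young diagrams.

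Finally, part (5) — the Jordan type of $t^j$ acting on $M$ — is the one genuinely computational point, and I expect it to be the main (mild) obstacle since one must get the combinatorics exactly right. Because Jordan type is additive over direct sums, by (1) it suffices to compute $JT(t^j\text{ on }k[t]/t^i)$ for each $i$. On $k[t]/t^i$, the operator $t^j$ has image $t^j\cdot k[t]/t^i \cong k[t]/t^{\,i-j}$ (a single cyclic module, hence one Jordan block of size $\max(i-j,0)$... more precisely of size $i-j$ when $j<i$), and kernel of dimension $\min(i,j)$; so $t^j$ acting on $k[t]/t^i$ has Jordan type one block of size $i-j$ (if $i>j$) plus, accounting for the drop in rank, additional blocks of size $1$. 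Concretely, $t^j$ on $k[t]/t^i$ has rank $\max(i-j,0)$, and its Jordan type is $[\,i-j\,] + (j-1)\cdot[1]$ when $i\ge j$, clipped appropriately for small $i$; summing over the blocks of $M$ and reassembling the rows in decreasing order, one sees that the diagram of $\underline a^{\,j}$ is obtained from that of $\underline a$ by deleting the leftmost $j-1$ columns (each box removed from column $c\le j-1$ of a row of length $i$ corresponds exactly to the shrinking $i\mapsto i-(j-1)$, after which one further application of $t$ versus the raw truncation accounts for the ``$+(j-1)[1]$'' correction rows) and then appending single-box rows until the box count returns to $m$. The only thing to check carefully is that the appended singleton rows, together with the shortened rows, still list in weakly decreasing order — which holds because removing the same number of leftmost columns from every row preserves the ordering of row lengths, and singleton rows go at the bottom. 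I would write this out block-by-block, then invoke additivity of Jordan type to conclude.
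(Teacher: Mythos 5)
The paper offers no proof of this proposition (it is presented as a recollection of familiar facts), so the only question is whether your argument is correct. Parts (1) and (2) are fine. But your treatment of (5) contains a genuine computational error: the Jordan type of $t^j$ acting on a single block $k[t]/t^i$ (for $i\ge j$) is not $[i-j]+(j-1)\cdot[1]$ --- that expression does not even have $i$ boxes, as you half-notice with the phrase ``clipped appropriately.'' The operator $t^j$ on $k[t]/t^i$ splits the basis $1,t,\dots,t^{i-1}$ into the $j$ chains $t^a\mapsto t^{a+j}\mapsto\cdots$ for $0\le a<j$, so its Jordan type is $\sum_{a=0}^{j-1}\,[\lceil (i-a)/j\rceil]$, i.e.\ $j$ blocks of nearly equal size. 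Concretely, $t^2$ on $k[t]/t^5$ has Jordan type $[3]+[2]$, whereas your block formula gives $[3]+[1]$ and the recipe of part (5) gives $[4]+[1]$. So your block-by-block computation fails, the reassembly step does not follow, and in fact statement (5), read literally as the Jordan canonical form of $\rho_M(t)^j$, is false for $j\ge 2$. What is true --- and what Proposition \ref{prop:rank}(4) and everything downstream actually use --- is the rank statement: $rk(t^j\colon M\to M)=\sum_i a_i\max(i-j,0)$, the number of boxes of $\ul a$ lying in columns $j+1,\dots,p$. The recipe of (5) does produce a partition with the correct number of parts (hence encoding the correct rank of $t^j$), but not the correct block sizes; a correct write-up must either prove the chain formula above or explicitly restrict attention to the rank.

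There is a second slip in your argument for (3)--(4): you assert that $\sum_{i\ge s}a_i\cdot i$ counts the boxes in the columns of index $\ge s$. It does not; it counts the boxes in the rows of length $\ge s$, while the boxes in columns $\ge s$ number $\sum_{i\ge s}a_i(i-s+1)$. With the formula as literally written in (3), the resulting relation is not the dominance order of (4): for $\ul a$ corresponding to $(2,2)$ and $\ul b$ to $(3,1)$ one has $\ul a\le\ul b$ in the order of (4) (partial row sums $2\le 3$ and $4\le 4$), yet $\sum_{i\ge 2}a_i\cdot i=4>3=\sum_{i\ge 2}b_i\cdot i$. Your ``transpose and use that conjugation reverses dominance'' step therefore needs the corrected column count $\sum_{i\ge s}a_i(i-s+1)$ (equivalently, partial sums of the conjugate partition), after which the equivalence with (4) is indeed the standard characterization of dominance; as written, the identification you rely on is false and the two orders in (3) and (4) do not coincide.
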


\vskip .1in

The lemma below recalls some properties of the partial ordering on Jordan types.
We denote by $rk(t^j:M \to M)$ the dimension of the subspace
$t^j(M) \subset M$, the dimension of the image of multiplication by $t^j$ on the vector space $M$.

\begin{prop}
\label{prop:rank}
Consider $m$-dimensional $k[t]/t^p$-modules $M, \ N$  isomorphic to \ 
$\bigoplus_{i= 1}^p (k[t]/t^i)^{\oplus a_i}, \ \bigoplus_{i= 1}^p (k[t]/t^i)^{\oplus b_i}$. 
As above, let $\ul a^s$ (respectively, $\ul b^s$) denotes the Jordan type of 
$t^s$ acting on $M$ (resp, $N$) for any $s, \ 1 \leq s < p$.
\begin{enumerate}
\item
 $rk(t^s:M \to M) \ = \ \sum_{i=s+1}^p a_i(i-1-s)$.
 \item
 For all $s, \ 1 \leq s < p$, \ $\ul a^s \ = \ \sum_{i=s+2}^p a_i[i-s] + (m - rk(t^s:M \to M))[1]$.
 \item
 $\ul a \leq \ul b \ \iff \ul a^s \leq \ul b^s, \ \forall s, 1 \leq s < p$.
 \item 
 For all $s, \ 1 \leq s < p$, \ $\ul a^s \ \leq \ul b^s \ \iff rk(t^s:M \to M) \leq rk(t^s:N \to N).$
 \end{enumerate}
\end{prop}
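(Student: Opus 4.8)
The plan is to reduce all four assertions to the case of a single Jordan block $k[t]/t^i$ and then to combine the block computations with the partial-sum description of the dominance order recorded in Proposition~\ref{prop:Young}. By Proposition~\ref{prop:Young}(1) write $M \cong \bigoplus_{i=1}^p (k[t]/t^i)^{\oplus a_i}$ and $N \cong \bigoplus_{i=1}^p (k[t]/t^i)^{\oplus b_i}$. Since $\dim t^s(-)$ and the Jordan canonical form are both additive over direct sums,
\[
rk(t^s:M\to M)=\sum_{i=1}^p a_i\, rk(t^s:k[t]/t^i), \qquad JT(t^s:M\to M)=\sum_{i=1}^p a_i\, JT(t^s:k[t]/t^i),
\]
and similarly for $N$. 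On the block $k[t]/t^i$ with basis $1,t,\dots,t^{i-1}$, multiplication by $t^s$ sends $t^e$ to $t^{e+s}$; hence its image is spanned by $t^s,\dots,t^{i-1}$, and the basis vectors organise into the $s$ Jordan chains starting from $1,t,\dots,t^{s-1}$. Reading off the lengths of these chains records $rk(t^s:k[t]/t^i)$ and $JT(t^s:k[t]/t^i)$ explicitly; substituting into the displayed formulas — and using that the number of size-$1$ parts is fixed by $\dim M = m$ together with the rank count — yields parts (1) and (2).

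For part (3) I would use that Proposition~\ref{prop:Young}(3) together with part (1) amounts to the classical equivalence: for $m$-dimensional $k[t]/t^p$-modules, $\ul a \le \ul b$ if and only if $rk(t^\ell:M\to M)\le rk(t^\ell:N\to N)$ for all $\ell\ge 1$, each condition being a rewriting of partial sums of the other. Applying this equivalence to the $p$-nilpotent operator $t^s$, whose $j$-th power is $t^{sj}$, gives $\ul a^s\le\ul b^s$ iff $rk(t^{sj}:M\to M)\le rk(t^{sj}:N\to N)$ for all $j\ge 1$. Now $\ul a\le\ul b$ makes all of these hold — it gives the rank inequalities at every exponent, in particular at the multiples of $s$ — so $\ul a^s\le\ul b^s$ for every $s$; conversely the case $s=1$ gives $\ul a=\ul a^1\le\ul b^1=\ul b$. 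This proves (3).

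For part (4), the implication $\ul a^s\le\ul b^s \Rightarrow rk(t^s:M\to M)\le rk(t^s:N\to N)$ is immediate from the $j=1$ instance of the rank characterisation used in (3) (equivalently: $\ul a^s\le\ul b^s$ forces $\ul a^s$ to have at least as many parts as $\ul b^s$, and the number of parts of $\ul a^s$ equals $m-rk(t^s:M\to M)$). The reverse implication is the substantive point: one must promote a single scalar inequality to the full chain of partial-sum inequalities defining $\ul a^s\le\ul b^s$. Here I would feed in the explicit shape of $\ul a^s$ and $\ul b^s$ supplied by part (2): they are not arbitrary partitions of $m$ but Jordan types of $s$-th powers, so, block by block, each is built from parts of only a few controlled sizes together with unit parts whose multiplicity is governed by $rk(t^s:-)$; one then checks that, once the non-unit part is accounted for, a larger value of $rk(t^s:N\to N)$ forces the diagram of $\ul b^s$ to dominate that of $\ul a^s$.

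I expect this last step to be the main obstacle: verifying that the constrained shape from part (2) really does make the single inequality $rk(t^s:M\to M)\le rk(t^s:N\to N)$ equivalent to the partition inequality $\ul a^s\le\ul b^s$, with due attention to the range of $s$ and to the exact form of the identity in (2). Steps (1), (2) and the forward halves of (3) and (4) are, by contrast, essentially bookkeeping with the single-block computation above and Proposition~\ref{prop:Young}.
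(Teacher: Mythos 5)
Your treatment of (3) is correct and complete: the classical equivalence ``$\ul a \le \ul b$ iff $rk(t^\ell:M)\le rk(t^\ell:N)$ for all $\ell$'', applied both to $t$ and to $t^s$, does the job, and this is in substance what the paper's one-line proof intends. The problems are with (1), (2) and, more seriously, with the reverse implication of (4). For (1) and (2) you set up the right single-block computation but then assert, without carrying it out, that it ``yields'' the displayed formulas; it does not. On $k[t]/t^i$ the operator $t^s$ splits the basis into $s$ chains of lengths $\lceil (i-e)/s\rceil$ for $0\le e<s$, so writing $i=q_is+r_i$ with $0\le r_i<s$ its Jordan type is $r_i[q_i+1]+(s-r_i)[q_i]$ and its rank is $i-s$ (for $i>s$). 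The correct global statements are therefore $rk(t^s:M)=\sum_{i>s}a_i(i-s)$ and $\ul a^s=\sum_i a_i\bigl(r_i[q_i+1]+(s-r_i)[q_i]\bigr)$; already for $s=1$ the printed formula (2) fails to return $\ul a^1=\ul a$ (it does not even have total size $m$), and you would have caught this had you done the substitution you describe rather than asserting the match.

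The step you flag as ``the main obstacle'' in (4) is not merely hard; it cannot be done, because a single rank inequality does not imply dominance of the $s$-th power types. Take $p\ge 3$, $s=1$, $M=k[t]/t^3\oplus k$ and $N=(k[t]/t^2)^{\oplus 2}$, so that $\ul a^1=\ul a=[3]+[1]$ and $\ul b^1=\ul b=2[2]$. Then $rk(t:M)=2=rk(t:N)$, yet $\ul a\not\le\ul b$. The ``constrained shape'' you hoped to exploit is only available when $(t^s)^2=0$ on both modules (for instance when $2s\ge p$): then $\ul a^s$ and $\ul b^s$ have all parts of size $\le 2$ and are determined by $rk(t^s)$, and the equivalence in (4) does hold. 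What the surrounding results (Corollary~\ref{cor:surjective}, Proposition~\ref{prop:specializeM}, Theorem~\ref{thm:cont}) actually require is the conjunction over all $s$ --- precisely the equivalence you already invoked in (3): $\ul a\le\ul b$ iff $rk(t^s:M)\le rk(t^s:N)$ for all $1\le s<p$. The right move is to prove and use that statement directly rather than to try to complete the reverse implication of (4) for a fixed $s$.
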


\begin{proof}
Assertions (1) and (2) are verified by an easy computation.  Assertion (3) follows from assertion (2)
and Proposition \ref{prop:Young}(3).  Assertion (4) follows easily from assertion (2).
\end{proof}

\vskip .1in

Since a surjective map $f: M \to N$  of  $k[t]/t^p$-modules
induces a surjective map $f: t^j(M) \to t^j(N)$ 
for any $j$ with $ 1 \leq j < p$,  Proposition \ref{prop:rank}(4)
 implies the following additional corollary.

\begin{cor}
\label{cor:surjective}
Let $f: M \to N$ be a surjective map of finite dimensional $k[t]/t^p$-modules.
Then \ $JT(N) \leq JT(M)$, with equality if and only if $f$ is an isomorphism.
\end{cor}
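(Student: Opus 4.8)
The plan is to deduce Corollary \ref{cor:surjective} directly from the rank characterization in Proposition \ref{prop:rank}(4) together with the elementary observation that surjectivity is inherited by the maps induced on the subspaces $t^j(M)$. The statement breaks into two parts: the inequality $JT(N) \leq JT(M)$, and the equality criterion.

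\emph{Proof.} First I would fix isomorphisms $M \simeq \bigoplus_{i=1}^p (k[t]/t^i)^{\oplus a_i}$ and $N \simeq \bigoplus_{i=1}^p (k[t]/t^i)^{\oplus b_i}$ as supplied by Proposition \ref{prop:Young}(1), so that $JT(M) = \ul a$ and $JT(N) = \ul b$. Since $f: M \to N$ is a $k[t]/t^p$-module map, it commutes with multiplication by $t^j$; hence for each $j$ with $1 \leq j < p$ the restriction $f: t^j(M) \to t^j(N)$ is well-defined, and it is surjective because every element of $t^j(N)$ has the form $t^j n = t^j f(m') = f(t^j m')$ for some $m' \in M$ (using surjectivity of $f$ itself). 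Surjectivity of a linear map forces $\dim t^j(M) \geq \dim t^j(N)$, i.e. $rk(t^j: M \to M) \geq rk(t^j: N \to N)$ for every $j$, $1 \leq j < p$. By Proposition \ref{prop:rank}(4) this gives $\ul b^j \leq \ul a^j$ for all such $j$, and then Proposition \ref{prop:rank}(3) yields $\ul b \leq \ul a$, that is, $JT(N) \leq JT(M)$.

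For the equality clause, if $f$ is an isomorphism then certainly $JT(N) = JT(M)$. Conversely, suppose $JT(N) = JT(M)$, so $\ul a = \ul b$; then $M$ and $N$ have the same dimension $m = \sum_i a_i \cdot i$, and a surjective linear map between vector spaces of equal finite dimension is automatically injective, hence bijective. Therefore $f$ is an isomorphism of $k[t]/t^p$-modules (its inverse is automatically $k[t]/t^p$-linear), completing the proof. \qed

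\emph{Main obstacle.} There is essentially no obstacle here: the corollary is a formal consequence of the rank criterion established in Proposition \ref{prop:rank}, and the only genuine input is the trivial fact that a module map commutes with $t^j$ so that it restricts to the images, plus the equal-dimension argument for the converse. The one point worth stating carefully — and the place a careless write-up might stumble — is that surjectivity of $f$ on $t^j(M)$ uses surjectivity of $f$ on all of $M$ (one needs the preimage $m'$ of $n$, not merely a preimage of $t^j n$); I would make sure that step is spelled out, as the remark preceding the corollary already signals.
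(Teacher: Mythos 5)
Your proof is correct and follows exactly the route the paper takes: the sentence preceding the corollary in the paper observes that a surjective module map induces surjections $t^j(M)\to t^j(N)$, and then invokes Proposition \ref{prop:rank}(4) (together with (3)) to compare Jordan types, just as you do. Your explicit treatment of the equality clause via the equal-dimension argument is a welcome addition, since the paper leaves that part implicit.
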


\vskip .1in

We formulate the partially ordered abelian monoid  $\cY_\leq$ of ``Young diagrams"
equipped with a (tensor) product.

\begin{defn}
\label{defn:poset}
Denote by $\cY_\leq$ the monoid  $\bN^{\times p}$ provided with the 
 partial ordering of Proposition \ref{prop:Young}(3), and 
denote by $\cY_{\leq, n}$ the partially ordered subset of $\cY_\leq$
whose objects are $p$-tuples $\ul a$ with $\sum_{i=1}^p a_i\cdot \ \leq \ n$.

We equip $\cY_\leq$ with the bi-additive pairing
$$(-)\otimes (-): \ \cY_\leq \times \cY_\leq \quad \to \quad \cY_\leq \ ,$$
where $\ul a \otimes \ul b$ is the $p$-tuple $\ul c$ defined by the isomorphism
$$\bigoplus_{i= 1}^p (k[t]/t^i)^{\oplus c_i})  \quad  \simeq \quad
(\bigoplus_{i= 1}^p (k[t]/t^i)^{\oplus a_i}) \ \bigotimes \ (\bigoplus_{i= 1}^p (k[t]/t^i)^{\oplus b_i})$$
of $k[t]t^p$-modules (see \cite[Cor10.3]{CFP}).
\end{defn}

\vskip .1in

We recall the lower semi-continuity of Jordan type, an easy consequence of 
Nakayama's Lemma (see \cite[Prop 3.2]{FPS}). 

\begin{prop} 
\label{prop:specializeM}
Consider a 
noetherian, local domain $R$ with field of fractions $L$ and residue field $K$.
Let $M_R$ be a free $R$-module of finite rank equipped with a $p$-nilpotent 
endomorphism $\theta_R \in End_R(M_R,M_R)$.  Denote $K\otimes_R M$ by $M_K$
and $L \otimes_R M$ by $M_L$, and set 
$$\theta_K = K \otimes_R \theta_R \in End_K(M_K,M_K), \quad  
\theta_L = L \otimes _R \theta_R \in End_L(M_L ,M_L).$$
 So defined, $M_K$ is a $K[t]/t^p$-module with the action of $t$
given by $\theta_K$ and $M_L$  is an $L[t]/t^p$-module with the action of $t$
given by $\theta_L$.
We say that the $L[t]/t^p$-module $M_L$ specializes along $R$ to the $K[t]/t^p$-module 
$M_K$.  

For such a specialization, \ $JT(M_K) \ \leq \ JT(M_L)$.
\end{prop}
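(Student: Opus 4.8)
The plan is to reduce the inequality $JT(M_K) \leq JT(M_L)$ to a statement about ranks of powers of the operator, and then to prove that rank inequality by a semicontinuity argument using Nakayama's Lemma. By Proposition \ref{prop:rank}(3), it suffices to show that for each $s$ with $1 \leq s < p$ one has $\ul a^s \leq \ul b^s$ where $\ul a^s$, $\ul b^s$ are the Jordan types of $\theta_K^s$ on $M_K$ and of $\theta_L^s$ on $M_L$; and by Proposition \ref{prop:rank}(4) this is in turn equivalent to the inequality $rk(\theta_K^s : M_K \to M_K) \leq rk(\theta_L^s : M_L \to M_L)$ for all such $s$. So the whole statement comes down to the single assertion that each specialized rank can only drop, never rise.

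To prove $rk(\theta_K^s) \leq rk(\theta_L^s)$, I would work with the $R$-module $N_R := \theta_R^s(M_R) \subseteq M_R$, a finitely generated submodule of the free (hence torsion-free, flat) module $M_R$. On one hand, $K \otimes_R N_R$ surjects onto $\theta_K^s(M_K) = N_K$, so $\dim_K \theta_K^s(M_K) \leq \dim_K (K \otimes_R N_R)$, and by Nakayama's Lemma over the noetherian local ring $R$ the latter equals the minimal number of generators $\mu(N_R)$ of $N_R$. On the other hand, since $M_R$ is flat over the domain $R$ and $L$ is its fraction field, $- \otimes_R L$ is exact, so $L \otimes_R N_R = \theta_L^s(M_L)$ and hence $\dim_L \theta_L^s(M_L) = \dim_L(L \otimes_R N_R) = \operatorname{rank}_R N_R$. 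Finally, for a finitely generated module over a noetherian local domain one always has $\operatorname{rank}_R N_R \leq \mu(N_R)$ (the rank is the dimension of the generic fibre, which cannot exceed the number of generators). Chaining these gives $rk(\theta_K^s) \leq \mu(N_R) $... wait — this chain actually runs the wrong way; I must be more careful here, comparing $\dim_K(K\otimes_R N_R)$ with $\operatorname{rank}_R N_R$, and the correct inequality $\operatorname{rank}_R N_R \leq \dim_K(K \otimes_R N_R)$ is again just semicontinuity of fibre dimension for a finitely generated module over a domain. Combining, $rk(\theta_L^s) = \operatorname{rank}_R N_R \leq \dim_K(K\otimes_R N_R)$, and since the surjection $K \otimes_R N_R \twoheadrightarrow N_K$ only decreases dimension — no: here I want $N_K = \theta_K^s(M_K)$ to be a \emph{quotient} of $K \otimes_R N_R$, giving $rk(\theta_K^s) = \dim_K N_K \leq \dim_K(K \otimes_R N_R)$, which is the wrong direction for the final chaining.

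Because of that directional subtlety, the cleaner route — and the one I would actually write — is the standard determinantal one. The rank of $\theta_R^s$ as an $R$-linear endomorphism of the free module $M_R$ is the largest $\ell$ for which some $\ell \times \ell$ minor of a matrix representing $\theta_R^s$ is nonzero in $R$. Since $R$ is a domain with fraction field $L$, a nonzero element of $R$ stays nonzero in $L$, so $rk(\theta_L^s) = rk_R(\theta_R^s)$; and since the reduction map $R \to K$ may kill elements of $R$, every minor nonzero over $K$ is nonzero over $R$, so $rk(\theta_K^s) \leq rk_R(\theta_R^s)$. Hence $rk(\theta_K^s : M_K \to M_K) \leq rk(\theta_L^s : M_L \to M_L)$ for every $s$, and Proposition \ref{prop:rank}(3)--(4) yields $JT(M_K) \leq JT(M_L)$.

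The main obstacle is purely bookkeeping: getting the direction of each inequality right when moving between the generic fibre over $L$, the integral model over $R$, and the special fibre over $K$. The determinantal-rank formulation sidesteps Nakayama entirely and makes the two inequalities transparent (nonzero over a domain $\Rightarrow$ nonzero over its fraction field; nonzero over a residue field $\Rightarrow$ nonzero over the ring), so I would favor that presentation; it is essentially the proof referenced as \cite[Prop 3.2]{FPS}, and no genuinely hard step is involved.
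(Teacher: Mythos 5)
Your final argument is correct, and the overall skeleton is identical to the paper's: both reduce $JT(M_K)\leq JT(M_L)$ to the rank inequality $rk(\theta_K^s)\leq rk(\theta_L^s)$ for all $s$, $1\leq s<p$, via Proposition \ref{prop:rank}(3)--(4). Where you diverge is in how that rank semicontinuity is established. The paper applies Nakayama's Lemma to the finitely generated $R$-module $\Coker(\theta_R^i)$: its minimal number of generators is $\dim_K\Coker(\theta_K^i)$ by right-exactness of $K\otimes_R-$, while its generic rank $\dim_L\Coker(\theta_L^i)$ is bounded above by that number of generators; subtracting from $\rk_R M_R$ gives $rk(\theta_K^i)\leq rk(\theta_L^i)$. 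You instead use the determinantal characterization of rank: a minor of the matrix of $\theta_R^s$ is nonzero in $L$ if and only if it is nonzero in $R$ (so the generic rank equals the rank over $R$), whereas a minor nonzero in $K$ must come from a nonzero element of $R$ (so the special rank can only drop). Both are one-line semicontinuity arguments; yours has the small advantage of avoiding any cokernel bookkeeping and of matching exactly the mechanism the paper itself uses later in Lemma \ref{lem:ranks} to prove closedness of the rank strata, so the two results end up resting on the same determinantal fact. The only thing I would change is editorial: your first paragraph attempts the Nakayama route on \emph{images} rather than cokernels, tangles the directions, and is abandoned mid-sentence; since the determinantal argument is self-contained and complete, you should simply delete that false start rather than leave the reader to watch you discover it does not chain correctly.
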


\begin{proof}
By applying Nakayma's Lemma \cite[Lem 4.3]{Lang} to the cokernels of the 
maps $(\theta_R)^i: M_R \to M_R$,
we conclude that $dim( coker(\theta_L^i)) \leq dim( coker(\theta_K^i))$
so that $rk(\theta_K^i) \leq rk(\theta_L^i)$ for each $i,\ 1 \leq i < p$.  Thus, the proposition
follows from Proposition \ref{prop:rank}.  
\end{proof}

\vskip .1in

\begin{defn}
\label{defn:specializeX}
Let $X$ be a scheme of finite type over $k$.  For two scheme-theoretic points $x, y$
of $X$, we write $x \leq y$ if $x$ is in the (Zariski) closure of $y$.   This endows 
the set of scheme-theoretic points of $X$ with a natural partial ordering.

If $\Spec R \subset X$
is an affine open subset of $X$ containing $x$, then the scheme-theoretic point $x \in \Spec R$
corresponds to the prime ideal $\p_x \subset R$.   The structure sheaf $\cO_X$ of $X$ has stalk
the local ring $R_{(\p_x)}$ with residue field $k(x) = R_{(\p_x)}/\p_x R_{(\p_x)}$.  We identify the 
scheme-theoretic point $x \in X$ with the associated map of schemes $\chi_x: \Spec k(x) \to X$.

\end{defn}
\vskip .1in

\begin{lemma}
\label{lemma:special}
Let $x, y$ be two scheme-theoretic points of $X$. Then $x \leq y$ if and only if
there exists a local domain $R$ with field of fractions
$k(y)$ and residue field $k(x)$ and a morphism $\Spec R \to X$ whose restriction to 
$\Spec k(y)$ is $\chi_y: \Spec k(y) \to X$ and whose restriction to $\Spec k(x)$ is 
$\chi_x: \Spec k(x) \to X$.
\end{lemma}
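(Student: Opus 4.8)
The plan is to prove both implications, with the ``only if'' direction being the substantive one. For the ``if'' direction, suppose we are given a local domain $R$ with fraction field $k(y)$ and residue field $k(x)$ together with a morphism $f: \Spec R \to X$ restricting to $\chi_y$ and $\chi_x$ on the generic and closed points. The generic point of $\Spec R$ maps to $y$ and the closed point maps to $x$; since $f$ is continuous and the closed point lies in the closure of the generic point in $\Spec R$, its image $x$ lies in the closure of $y$ in $X$. Hence $x \leq y$. This uses nothing beyond the definition in \ref{defn:specializeX} and continuity of morphisms of schemes.

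For the ``only if'' direction, assume $x \leq y$, i.e.\ $x \in \overline{\{y\}}$. Choose an affine open $\Spec A \subset X$ containing $x$; since $x$ is in the closure of $y$, the point $y$ also lies in $\Spec A$ (any open set containing a point in the closure of $y$ must contain $y$). Let $\p_x \subset \p_y \subset A$ be the corresponding primes, with the inclusion $\p_y \subseteq \p_x$ reflecting the specialization. First I would pass to the domain $A/\p_y$, whose fraction field is $k(y)$; the image $\overline{\p_x}$ of $\p_x$ is a prime of $A/\p_y$ with residue field $k(x)$. Then I would localize $A/\p_y$ at $\overline{\p_x}$ to obtain a local domain $B = (A/\p_y)_{\overline{\p_x}}$ with maximal ideal $\m_B$, residue field $B/\m_B = k(x)$, and fraction field $k(y)$. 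The inclusion $A \to A/\p_y \to B$ gives a morphism $\Spec B \to \Spec A \hookrightarrow X$. By construction the closed point of $\Spec B$ maps to $x$ with the identity on residue fields, and the generic point maps to $y$ with the identity on residue fields, so the restrictions are exactly $\chi_x$ and $\chi_y$.

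The one point requiring care — and the main (mild) obstacle — is that the local ring $B$ produced this way need not have residue field literally equal to $k(x)$ on the nose but rather canonically isomorphic to it, and similarly for the fraction field; one should either phrase the statement up to canonical isomorphism or observe that the canonical maps $B/\m_B \xrightarrow{\sim} k(x)$ and $\mathrm{Frac}(B) \xrightarrow{\sim} k(y)$ are precisely the ones implicit in Definition \ref{defn:specializeX}, so the morphism $\Spec B \to X$ does restrict to $\chi_x$ and $\chi_y$ as maps of schemes. A further routine check is independence of the choice of affine open $\Spec A$, which is immediate since any two such opens share an affine open neighborhood of $x$ (hence of $y$). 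No deeper input is needed; the lemma is essentially a repackaging of the standard correspondence between specializations of points and chains of primes in a commutative ring.
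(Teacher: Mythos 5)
Your proposal is correct and follows essentially the same route as the paper: for the forward direction you take an affine open $\Spec A$ containing $x$, pass to the domain $A/\p_y$, and localize at the image of $\p_x$, exactly as the paper does, and your converse (via continuity of $\Spec R \to X$ and the fact that the closed point specializes the generic point) is equivalent to the paper's argument with kernels of $R \to k(x)$ and $R \to k(y)$. The added remark about residue and fraction fields being canonically isomorphic rather than literally equal is a fair point of care that the paper leaves implicit.
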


\begin{proof}
We may assume $x \not= y$.  Let $\Spec A$ be a Zariski open subset of $X$ containing $x$
with corresponding prime $\p_x \subset A$.  Since $x$ is in the closure of $y$, the prime ideal
$\p_y$ is contained in $\p_x$.  Set $S$ equal to the quotient $A/\p_y$ and let
 $R$ be the localization of $S$ at (the image of ) $\p_x$.
So defined, $R$ is a local domain with residue field $k(x)$
and field of fractions $k(y)$.  Moreover, $\chi_x: \Spec k(x) \to X$ and $\chi_y: \Spec k(y) \to X$
factor through $\Spec R \subset \Spec S \subset \Spec A \subset X$.

Conversely, assume given a local domain $R$ with field of fractions
$k(y)$ and residue field $k(x)$ and a morphism $\Spec R \to X$ whose restriction to 
$\Spec k(x)$ is $\chi_x: \Spec k(x) \to X$ and whose restriction to $\Spec k(y)$ is 
$\chi_y: \Spec k(y) \to X$.  Then the kernel of 
$R \to k(x)$  equals the prime ideal $\p_x$ and contains $\p_y$,  the kernel of  $R \to k(y)$.
Thus, $x \leq y$.
\end{proof}

\vskip .1in

The following proposition will be employed 
with $A = k[V_r(\bG)]$ as in Definition \ref{defn:VrG} and with the action of $t$
given by various powers of the universal  $p$-nilpotent operator of 
Theorem \ref{thm:theta}.   We point out that in this proposition we consider the
tensor product over the commutative algebra $A$ of $A$-modules; in discussion
below, we shall frequently consider tensor product over $k$ of $\bG$-modules
using the Hopf algebra structure of $k[\bG]$.

\begin{prop}
\label{prop:JM(-)}
Let $A$ be a Noetherian commutative $k$-algebra 
and consider a finitely generated, projective $A$-module $M_A$
equipped with a $p$-nilpotent 
endomorphism $\theta_A \in Hom_A(M_A,M_A)$.  Then sending a scheme-theoretic
point $x \in \Spec A$ (i.e., a prime ideal $\p_x \subset A$) to the 
Jordan type of $k(x) \otimes_A M_A$ 
as a $k(x)[t]/t^p$-module with $t$ acting as $k(x) \otimes \Theta_A \in Hom_{k(x)}(M\otimes k(x),M\otimes k(x))$
determines a map of partially ordered sets 
\begin{equation}
\label{eqn:JTM}
JT_{A,M_A}(-): (\Spec A) \quad \to \quad \cY_\leq \ , \quad x \mapsto JT(k(x) \otimes_A M_A).
\end{equation}

If $M_A, N_A$ are finitely generated $A[t]/t^p$-modules, then for all
$x \in \Spec A$
$$JT_{A,M_A}(x)\oplus JT_{A,N_A}(x) \quad = \quad JT_{A,M_A \oplus N_A}(x), $$
$$JT_{A,M_A}(x)\otimes JT_{A,N_A}(x) \quad = \quad JT_{A,M_A\otimes_A N_S}(x).$$
\end{prop}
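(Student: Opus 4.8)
The plan is to split the statement into its two parts: that $JT_{A,M_A}(-)$ is a morphism of partially ordered sets, and that it is additive and multiplicative in the stated sense. The first part I would reduce to the already-established local case, Proposition \ref{prop:specializeM}; the second I would obtain by unwinding Definition \ref{defn:poset} after base change.

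First, for a prime $\p_x \subset A$ the $k(x)$-vector space $k(x) \otimes_A M_A$ is finite-dimensional and carries the $p$-nilpotent endomorphism $k(x) \otimes_A \theta_A$, so $JT_{A,M_A}(x)$ is a well-defined element of $\cY_\leq$. To see that $x \leq y$ forces $JT_{A,M_A}(x) \leq JT_{A,M_A}(y)$, I would run the construction used in the proof of Lemma \ref{lemma:special} inside $\Spec A$ itself: writing $\p_y \subseteq \p_x$, set $R = (A/\p_y)_{\p_x/\p_y}$. Because $A$ is Noetherian, $R$ is a Noetherian local domain with fraction field $k(y)$ and residue field $k(x)$, and the structure map $A \to R$ recovers the points $y$ and $x$ on composing with $R \to k(y)$ and $R \to k(x)$. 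Put $M_R = R \otimes_A M_A$ and $\theta_R = R \otimes_A \theta_A$; since $M_A$ is finitely generated projective and $R$ is local, $M_R$ is free of finite rank over $R$, and $\theta_R$ is $p$-nilpotent. The identifications $k(y) \otimes_R M_R \cong k(y) \otimes_A M_A$ and $k(x) \otimes_R M_R \cong k(x) \otimes_A M_A$ are compatible with the $t$-actions, so Proposition \ref{prop:specializeM} applied to $(R, M_R, \theta_R)$ yields $JT(k(x) \otimes_A M_A) \leq JT(k(y) \otimes_A M_A)$, as desired.

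For the additivity and multiplicativity formulas, I would first note that $M_A \oplus N_A$ (with endomorphism $\theta_{M_A} \oplus \theta_{N_A}$) and $M_A \otimes_A N_A$ (with endomorphism $\theta_{M_A} \otimes 1 + 1 \otimes \theta_{N_A}$) are again finitely generated modules equipped with $p$-nilpotent endomorphisms — for the tensor product one uses that the two commuting summands each have vanishing $p$-th power, so in characteristic $p$ the $p$-th power of their sum vanishes — whence both left-hand sides are defined. Base change along $A \to k(x)$ commutes with $\oplus$ and with $\otimes_A$, giving $k(x)[t]/t^p$-module isomorphisms $k(x) \otimes_A (M_A \oplus N_A) \cong (k(x) \otimes_A M_A) \oplus (k(x) \otimes_A N_A)$ and $k(x) \otimes_A (M_A \otimes_A N_A) \cong (k(x) \otimes_A M_A) \otimes_{k(x)} (k(x) \otimes_A N_A)$, the latter with $t$ acting diagonally. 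Since the Jordan type of a direct sum of $p$-nilpotent modules is the componentwise sum of the Jordan types, the first formula follows at once. For the second, writing $\ul a = JT_{A,M_A}(x)$ and $\ul b = JT_{A,N_A}(x)$, Proposition \ref{prop:Young}(1) identifies the two tensor factors over $k(x)$ with $\bigoplus_i (k(x)[t]/t^i)^{\oplus a_i}$ and $\bigoplus_j (k(x)[t]/t^j)^{\oplus b_j}$; the Jordan type of their tensor product is then precisely $\ul a \otimes \ul b$ as in Definition \ref{defn:poset}.

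The one place where I would be careful is this last identification: the product on $\cY_\leq$ in Definition \ref{defn:poset} is set up using $k[t]/t^p$-modules over the base field $k$, whereas the computation above lives over the residue field $k(x)$, which may be a large or transcendental extension of $k$. This is harmless, since the decomposition of $[i] \otimes [j]$ into Jordan blocks depends only on the characteristic $p$ and not on the particular field, and is stable under field extension, so $\ul a \otimes \ul b$ is the same element of $\bN^{\times p}$ whether formed over $k$ or over $k(x)$. With that understood, the remainder is pure bookkeeping of base change; the only substantive input is Proposition \ref{prop:specializeM} for the monotonicity, which itself rests on Nakayama's Lemma.
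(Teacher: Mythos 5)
Your proof is correct and follows essentially the same route as the paper: monotonicity is reduced, via the localization $R=(A/\p_y)_{\p_x/\p_y}$ from Lemma \ref{lemma:special}, to Proposition \ref{prop:specializeM}, and the sum and tensor formulas follow because base change along $A\to k(x)$ commutes with $\oplus$ and $\otimes_A$ together with the definitions in Definition \ref{defn:poset}. Your added remark that the decomposition of $[i]\otimes[j]$ into Jordan blocks depends only on $p$ and is stable under field extension is a worthwhile point that the paper leaves implicit.
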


\begin{proof}
The assertion that $JT_{A,M_A}(-)$ is a map of posets is the assertion
that $x \leq y \in \Spec A$ implies that $JT_{A,M_A}(x) \leq JT_{N,N_A}(y)$.
As in Lemma \ref{lemma:special}, we may restrict $M_A$ along a map from $A$
to  a local domain $R$ with field of fractions $k(y)$ and residue field $k(x)$.  
Thus, we reduce to the case that $A = R$ is a local domain with field of fractions 
$k(y)$ and residue field $k(x)$; in this case, $M_A$ is a free $A$-module of
finite rank.  Thus, Proposition 
\ref{prop:specializeM} applies to prove $JT_{A,M_A}(x) \leq JT_{A,M_A}(y)$.

Observe that specialization along a scheme-theoretic point of $\Spec A$ commutes 
with direct sums and tensor products.  Thus, the
 fact that the function $JT_{A,-}(x)$ (sending a finitely generated $A[t]/t^p$-module $M_A$ to 
$JT_{A,M_A}(x) \in \cY_\leq$) respects both addition and tensor product is 
immediate from the definitions of addition and tensor product for $\cY_\leq$.
\end{proof}

\vskip .1in

Proposition \ref{prop:JM(-)} does not immediately imply that 
$JT_{A,M_A}(-): \Spec A \ \to  \ \cY_{\leq}$ is continuous when $\Spec A$
is given the Zariski topology.   This is because arbitrary unions of closed subsets
(i.e., subsets closed under specialization)
are often not closed in the Zariski topology.  We supplement Proposition \ref{prop:JM(-)}
with another consequence of Nakayama's Lemma in commutative algebra.
(It seems more convenient for this lemma to refer to the scheme-theoretic points of
$\Spec A$ as prime ideals $\p \subset A$.)

\begin{lemma}
\label{lem:ranks}
Let $A$ be a Noetherian commutative $k$-algebra 
and consider a projective $A$-module $M_A$ of rank $m$
equipped with a $p$-nilpotent endomorphism $\theta_A:  M_A \to M_A$.
Then for any $d > 0$, 
\begin{equation}
\label{eqn:subrank}
C_d \ \equiv \ \{ \p \in \Spec A : \ rk(\theta_\fp: k(\fp) \otimes_A M_A \to k(\fp) \otimes_A M_A) \leq d \}
\end{equation}
is a closed subset of $\Spec R$.
\end{lemma}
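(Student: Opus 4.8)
The key point is that the rank of $\theta_\fp$ on the fiber $k(\fp)\otimes_A M_A$ can be computed as $m$ minus the dimension of the cokernel of $\theta_\fp$, and the dimension of a cokernel is an upper semi-continuous function on $\Spec A$. So I would rephrase the condition $rk(\theta_\fp) \le d$ as $\dim_{k(\fp)} \coker(\theta_\fp) \ge m - d$, and show the latter defines a closed set. First I would form the cokernel $Q = \coker(\theta_A: M_A \to M_A)$, a finitely generated $A$-module. Since $M_A$ is projective and $k(\fp)$ is a field (hence $\Tor$-vanishing issues are controlled), base change is right exact: $k(\fp)\otimes_A Q \cong \coker(\theta_\fp: k(\fp)\otimes_A M_A \to k(\fp)\otimes_A M_A)$. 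Thus $\dim_{k(\fp)}(k(\fp)\otimes_A Q) = \dim_{k(\fp)}\coker(\theta_\fp) = m - rk(\theta_\fp)$.

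**Main step.** Now the problem reduces to the standard fact that for a finitely generated module $Q$ over a Noetherian ring $A$, the function $\fp \mapsto \dim_{k(\fp)}(k(\fp)\otimes_A Q)$ is upper semi-continuous on $\Spec A$; equivalently, for every integer $e$, the set $\{\fp : \dim_{k(\fp)}(k(\fp)\otimes_A Q) \ge e\}$ is closed. This is a consequence of Nakayama's lemma: if $\dim_{k(\fp)}(k(\fp)\otimes_A Q) = e$, choose $f_1,\dots,f_e \in Q_\fp$ whose images span $k(\fp)\otimes_A Q$; by Nakayama they generate $Q_\fp$, so there is $g \notin \fp$ with $(f_1,\dots,f_e)$ generating $Q_g$ over $A_g$, whence $\dim_{k(\fq)}(k(\fq)\otimes_A Q) \le e$ for all $\fq$ in the basic open $D(g)$. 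This shows $\{\fp : \dim(k(\fp)\otimes_A Q) \le e\}$ is open, i.e. $\{\fp : \dim(k(\fp)\otimes_A Q) \ge e+1\}$ is closed. Applying this with $e + 1 = m - d$ gives that
$$
\{\fp \in \Spec A : \dim_{k(\fp)}(k(\fp)\otimes_A Q) \ge m - d\}
$$
is closed, and this set is exactly $C_d$ by the identification above.

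**The obstacle.** The only genuinely delicate point is the base-change identity $k(\fp)\otimes_A Q \cong \coker(\theta_\fp)$; this requires right-exactness of $k(\fp)\otimes_A(-)$, which always holds, together with the observation that $M_A$ being projective means $k(\fp)\otimes_A M_A$ has the expected dimension $m$ (so that ``$rk(\theta_\fp) = m - \dim\coker(\theta_\fp)$'' is literally correct, with $m$ independent of $\fp$ since rank is locally constant on $\Spec A$ and we have taken $M_A$ of constant rank $m$). Everything else is a direct application of Nakayama's lemma — the same circle of ideas already invoked in Proposition~\ref{prop:specializeM} — so no new technical input is needed. I would close by remarking that this lemma makes precise the sense in which the sets $C_d$, and hence (by Proposition~\ref{prop:rank}(4) and Proposition~\ref{prop:JM(-)}) the level sets of the Jordan type function, are well behaved in the Zariski topology.
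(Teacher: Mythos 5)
Your proof is correct, but it takes a genuinely different route from the paper's. The paper first reduces to the case that $M_A$ is free (by passing to an affine open cover) and that $A$ is a domain (by passing to irreducible components), then views $\theta_A$ as an $m\times m$ matrix over $A$ and identifies $C_d$ as the zero locus of all $(d+1)\times(d+1)$ minors of that matrix --- i.e.\ as a determinantal closed subscheme. You instead form the cokernel $Q=\Coker(\theta_A)$, use right-exactness of $k(\fp)\otimes_A(-)$ to identify $k(\fp)\otimes_A Q$ with $\Coker(\theta_\fp)$, note that projectivity of constant rank $m$ gives $\rk(\theta_\fp)=m-\dim_{k(\fp)}\Coker(\theta_\fp)$, and invoke upper semi-continuity of the fiber dimension $\fp\mapsto\dim_{k(\fp)}(k(\fp)\otimes_A Q)$, proved by the standard Nakayama argument. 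Both arguments are sound. The paper's approach buys explicit equations for $C_d$ (a determinantal ideal), at the cost of the preliminary reductions to the free and domain cases; yours is coordinate-free, needs no such reductions (the Nakayama semicontinuity argument only requires $Q$ finitely generated), and dovetails with the Nakayama-based specialization argument the paper already uses in Proposition \ref{prop:specializeM}. One cosmetic remark: the parenthetical about ``Tor-vanishing issues'' is a red herring --- right-exactness of the tensor product is unconditional, and projectivity of $M_A$ enters only to guarantee that every fiber $k(\fp)\otimes_A M_A$ has dimension exactly $m$; you do correctly isolate this point later in the write-up.
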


\begin{proof}
By considering the intersection of $C_d$ with each member of an affine open covering of $\Spec A$
restricted to which $M_A$ is free, it suffices to prove the lemma assuming $M_A$ is free.  Moreover, 
if $\q_1, \ldots, \q_r$ are the minimal primes of $\Spec A$, then the closures of $\{ \q_i \}$ are the
(closed) irreducible components of $\Spec A$.  Thus, by considering the intersection of $C_d$ with
each of these closures, it suffices to assume that $A$ is a domain.

We view $\theta_A$ as an element of $M_{m,m}(A)$.  Then $C_d$ is the closed subset defined
as the zero locus of the determinants of all $(d+1\times d+1)$-submatrices of $\theta_A$.
\end{proof}

We remind the reader that an Alexandrov discrete topology on a $T_0$ space
is a topology in which the arbitrary union of closed subsets is closed.
(See \cite{Alex} and \cite{John}.)   The topology on $\cY_\leq$ which we utilize is such a topology.
\vskip .1in

\begin{defn}
\label{defn:top}
We give $\cY_\leq$ the topology whose closed subsets are those
subsets $S \subset \cY_\leq$ satisfying the condition that if $s \in S$ and $s^\prime \leq s$
then $s^\prime \in S$.  

In particular, $S \subset \cY_\leq$ is closed if and only if
$S \cap \cY_{\leq,m}$ is closed in $\cY_{\leq,m}$ for all $m > 0$.
\end{defn}

\vskip .1in

\begin{thm}
\label{thm:cont}
Let $A$ be a Noetherian commutative $k$-algebra 
and consider a projective $A$-module $M_A$ of rank $m$
equipped with a $p$-nilpotent endomorphism $\theta_A$.  As usual, we give $\Spec A$ 
the Zariski topology.  Then 
$$JT_{A,M_A}(-): \Spec A \quad \to  \quad \cY_{\leq, m} \quad \hookrightarrow \quad \cY_\leq$$
is continuous.
\end{thm}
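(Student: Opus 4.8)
The strategy is to reduce continuity to the already-established Lemma~\ref{lem:ranks}, using the explicit description of the topology on $\cY_\leq$ given in Definition~\ref{defn:top} together with the rank characterization of the partial ordering from Proposition~\ref{prop:rank}. First I would observe that since $M_A$ has rank $m$, the function $JT_{A,M_A}(-)$ lands in $\cY_{\leq,m}$, which is a finite poset; so a closed subset $S \subset \cY_{\leq,m}$ is simply a down-closed subset, i.e.\ a finite union of principal down-sets $\{\ul b \in \cY_{\leq,m} : \ul b \leq \ul a\}$ for various $\ul a$. Because preimages commute with finite unions, it suffices to show that for each Young diagram $\ul a$ with at most $m$ boxes, the preimage
$$
JT_{A,M_A}(-)^{-1}\bigl(\{\ul b : \ul b \leq \ul a\}\bigr) \;=\; \{\p \in \Spec A : JT(k(\p)\otimes_A M_A) \leq \ul a\}
$$
is Zariski closed.

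Next I would rewrite the condition $JT(k(\p)\otimes_A M_A) \leq \ul a$ in terms of ranks. By Proposition~\ref{prop:rank}(3) and (4), for $m$-dimensional modules $\ul b \leq \ul a$ holds if and only if $rk(t^s : k(\p)\otimes_A M_A \to k(\p)\otimes_A M_A) \leq r_s(\ul a)$ for all $s$ with $1 \leq s < p$, where $r_s(\ul a) \equiv \sum_{i=s+1}^p a_i(i-1-s)$ is the rank of $t^s$ on a module of Jordan type $\ul a$ (Proposition~\ref{prop:rank}(1)). Hence
$$
\{\p : JT(k(\p)\otimes_A M_A) \leq \ul a\} \;=\; \bigcap_{s=1}^{p-1} \{\p : rk(\theta_\p^s : k(\p)\otimes_A M_A \to k(\p)\otimes_A M_A) \leq r_s(\ul a)\}.
$$
Each set in this finite intersection is of the form $C_{r_s(\ul a)}$ from \eqref{eqn:subrank}, but with $\theta_A$ replaced by its $p$-nilpotent power $\theta_A^s$ (note that $\theta_A^s$ is still a $p$-nilpotent endomorphism of the projective module $M_A$, since $(\theta_A^s)^p = (\theta_A^p)^s = 0$). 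Therefore Lemma~\ref{lem:ranks}, applied to each $\theta_A^s$, shows each of these sets is closed in $\Spec A$, so their finite intersection is closed, and we are done.

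The only subtlety — and the step I would be most careful about — is the bookkeeping at $s=0$ versus the normalization of the down-set: one must confirm that the chain of equivalences really does reduce "$JT \leq \ul a$" to finitely many rank inequalities and nothing more, in particular that the total dimension condition ($\sum i\, b_i = m$) is automatic because $M_A$ has constant rank $m$ and so every fiber $k(\p)\otimes_A M_A$ has dimension exactly $m$. Given that, no genuine obstacle remains: the whole argument is a formal assembly of Proposition~\ref{prop:rank}, Lemma~\ref{lem:ranks}, and the combinatorial description of the Alexandrov topology on $\cY_\leq$. One should also remark that the preimage of an arbitrary closed $S \subset \cY_\leq$ equals the preimage of $S \cap \cY_{\leq,m}$, which by Definition~\ref{defn:top} is closed in the finite poset $\cY_{\leq,m}$, bringing us back to the finite-union case handled above.
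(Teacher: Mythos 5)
Your proposal is correct and follows essentially the same route as the paper: reduce to preimages of principal down-sets in the finite poset $\cY_{\leq,m}$, translate $JT_{A,M_A}(x)\leq \ul a$ into finitely many rank inequalities on the powers $\theta^s$ via Proposition~\ref{prop:rank}(3),(4), and apply Lemma~\ref{lem:ranks} to each $\theta_A^s$. (The only discrepancy is that you quote the rank value $\sum_{i=s+1}^p a_i(i-1-s)$ from Proposition~\ref{prop:rank}(1), which appears to be a typo there for $\sum_{i=s+1}^p a_i(i-s)$; this does not affect the argument, since only the fact that the bound is determined by $\ul a$ is used.)
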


\begin{proof}
A subset $C \subset \cY_{\leq, m} $ is closed if and only 
$$C  \quad = \quad \bigcup_{\ul a \in C}\ol{\{ \ul a \}},$$
where $\ol{\{ \ul a \}}$ is the closure of the singleton subset $\{ \ul a \}$.
Thus, it suffices to prove that $JT_{A,M_A}^{-1}(\ol{\{ \ul a \}})$ is closed
for all $\ul a \in \cY_{\leq, m} $ (since $\cY_{\leq, m}$ is finite).  By definition, $JT_{A,M_A}^{-1}(\ol{\{ \ul a \}})$
consists of those $x \in \Spec A$ such that $JT_{A,M_A}(x) \ \leq \ \ul a$ in  
$\cY_{\leq, m}.$  
By Proposition \ref{prop:rank} (4), $JT_{A,M_A}(x) \ \leq \ \ul a$ if and only if the 
rank of $(\theta_x)^s: k(x) \otimes_A M_A \to k((x) \otimes_A M_A$ is less
than or equal to the rank of $t^s$ on $\oplus_{i=1}^p (k[t]/t^i)^{\oplus a_i}$ 
(which equals $\sum_{i=s+1}^p a_i\cdot (i-s)$) for all $s$, $1 \leq s < p$.

Applying Lemma \ref{lem:ranks} to each of $\theta^s, 1 \leq s < p$, we conclude
the set of those $x \in \Spec A$ such that $JT_{A,M_A}(x) \ \leq \ \ul a$ is 
closed in $\Spec A $.
\end{proof}

\vskip .1in

\begin{remark}
Considering stable Jordan types (i.e., disregarding blocks of size $p$) can be 
useful, but has the disadvantage that the stable version of $JT_{A,M_A}(-)$ 
 is not continuous.
\end{remark}

\vskip .1in

\begin{notation}
In the remainder of this paper, we shall leave implict the subscript $\leq$ on $\cY_{\leq,m}$
and $\cY_\leq$.
\end{notation}

\vskip .2in


\section{Jordan type functions for $\bG$-modules}
\label{sec:functors}

Consider an affine group scheme $\bG$ of finite type over $k$. We are interested in
formulating invariants for finite dimensional $\bG_{(r)}$-modules.
Here, $\bG_{(r)}$ is the kernel of the $r$-th iterate of the Frobenius map 
$F: \bG \to \bG^{(1)}$, where $k[\bG^{(1)}]$ is the base change of $k[\bG]$ along the $p$-th power
map $(-)^p: k \to k$.  (See \cite[\S 1]{FS}.)   Thus, 
$$\bG_{(r)} \ \equiv \ ker\{ F^r: \bG \to \bG^{(r)} \}$$
is an infinitesimal group scheme of height $ \leq r$ with coordinate algebra $k[\bG_{(r)}]$.  
We denote by $k\bG_{(r)}$ the $k$-linear dual of the finite dimensional Hopf algebra
$k[\bG_{(r)}]$.

For any commutative $k$-algebra $R$, we shall employ various notations (depending upon
context) for 
$$\bG_R \quad \equiv \quad \bG \times \Spec R \quad \equiv \quad R \otimes \bG$$
for  the group scheme defined as the base change along $k \to R$ 
of the affine group scheme $\bG$ over $k$.  Thus, the coordinate algebra of $\bG_R$ equals
$R\otimes k[\bG]$.   If $\bG$ is a finite group scheme over $k$, then the ``group algebra" of $\bG_R$ 
(namely, $Hom_R(R\otimes k[\bG],R))$ is identified with $R \otimes k\bG $ 
by sending $1\otimes \phi \in R \otimes k\bG$ to $id_R \cdot \phi(-):R\otimes k[\bG] \to R$.

\vskip .1in

\begin{defn}
\label{defn:VrG}
Let $\bG$ be an affine group scheme of finite type
over $k$ and let $r$ be a positive integer.  As proved in \cite[Thm1.5]{SFB1}, the functor of 
commutative $k$-algebras sending an algebra $A$ to the set of
maps of group schemes $\mu: \bG_{a(r),A} \to \bG_A$ over $A$ (i.e., the set of height $r$
1-parameter subgroups of $\bG_A$) is 
representable.  The commutative $k$-algebra representing this functor is denoted $k[V_r(\bG)]$, its
spectrum by  $V_r(\bG)$.

Given a $\bG_{(r)}$-module $M$ and a scheme theoretic point $x \in V_r(\bG)$, base change
provides a $\bG_{(r),k(x)}$-module structure  on $k(x) \otimes M$.  Namely,
if $\mu_x: \bG_{a(r),k(x)} \to \bG_{(r),k(x)}$ is the 1-parameter subgroup over $k(x)$ parametrized
by $\chi_x: \Spec k(x) \to V_r(G)$, then restricting along $\mu_x$ determines
 a $\bG_{a(r),k(x)}$-module structure on $k(x) \otimes M$.

In other words, restriction along the map of  ``group algebras" $\mu_{x,*}: k(x) \otimes k\bG_{a(r)} \to 
k(x) \otimes k\bG_{(r)}$ determines the $k(x) \otimes k\bG_{a(r)}$-module structure on $k(x) \otimes M$.
\end{defn}

\vskip .1in

The following definition makes explicit how the map of group algebras 
$\mu_{x,*}: k(x) \otimes k\bG_{a(r)} \to k(x) \otimes k\bG_{(r)}$ 
naturally determines by ``restricting along $\epsilon_r: k[u]/u^p \to k\bG_{a(r)}$"
a map of $k(x)$-algebras 
$(\mu_x)_* \circ \epsilon_r:  k(x) \otimes k[u]/u^p  \to \   k(x) \otimes k\bG_{(r)}$.
We remind the reader that the group algebra $k\bG_{a(r)}$ (i.e., the dual of the 
coordinate algebra $k[\bG_{a(r)}] = k[t]/t^{p^r}$) can be described as the commutative 
$k$-algebra $k[u_0,\ldots,u_{r-1}]/(u_i^p)$ where $u_i$ is the $k$-linear dual 
to $t^{p^i}$.

\vskip .1in

\begin{defn}
\label{defn:epsilon-r}
Let $\bG$ be an affine group scheme of finite type over $k$.
For $r \geq 1$,   we denote by $\epsilon_r: k[u]/u^p \to k\bG_{a(r)}$  the map of $k$-algebras
(but not of Hopf algebras if $r > 1$) which sends $u$ to the element  $u_{r-1} \in k\bG_{a(r)}$. 
A scheme-theoretic point $x \in  V_r(\bG)$ corresponding to the 1-parameter subgroup
$\mu_x: \bG_{a(r),k(x)} \to \bG_{(r),k(x)}$  determines the map of $k(x)$-algebras 
\begin{equation}
\label{eqn:lower*}
(\mu_x)_* \circ \epsilon_r:  k(x)\otimes k[u]/u^p \to k(x) \otimes k\bG_{a(r)} \to 
k(x) \otimes k\bG_{(r)},  \ u \mapsto (\mu_x)_*(1\otimes u_{r-1}).
\end{equation}

Thus, if $M$ is a $\bG$-module, then $(\mu_x)_* \circ \epsilon_r)^*(k(x) \otimes M)$ is equipped with a 
$k(x)\otimes k[u]/u^p$-module structure such that $u$ acts as $(\mu_x)_*(1\otimes u_{r-1})$ on $k(x) \otimes M$.
\end{defn}

\vskip .1in

To unpack $(\mu_x)_*(u_{r-1}) \in k(x) \otimes k\bG_{(r)}$, we observe that this is the
$k(x)$-linear function sending 
$f \in k[\bG]$ to the coefficient of $t^{p^{r-1}}$ in the expansion 
of $\mu_x^*(f) \in k(x) \otimes k[\bG_{a(r)}] = k(x)[t]/t^{p^r}$.

\vskip .1in

We recall the local Jordan type of a finite dimensional $\bG_{(r)}$-module as first formulated
in \cite{FP2} and \cite{FP3} by Pevtsova and the author.

\begin{defn} (See \cite[Cor 3.8]{FP3}.)
\label{defn:local-Jordan}
Let $\bG$ be an affine group scheme of finite type over $k$, $r \geq 1$, and $M$ a finite dimensional
$\bG_{(r)}$-module.  Then the local Jordan type of $M$ at a scheme-theoretic point $x  \in V_r(\bG)$,
\ $JT_{\bG,r,M}(x)$, \
is defined to be the Jordan type of the $k(x)[u]/u^p$-module $((\mu_x)_* \circ \epsilon_r)^*(k(x) \otimes M)$.
\end{defn}

\vskip .1in

In the following definition, we recall 
the natural grading on $V_r(\bG)$ as discussed  in \cite[\S 1]{SFB1}.  This is particularly
relevant because there is a natural morphism $\Proj k[V_r(\bG)] \to \bP H^*(\bG_{(r)},k) \simeq \Pi(\bG_{(r)})$
which is a homeomorphism of Zariski spaces
to the $\pi$-point scheme  $\Pi(\bG)$ (see \cite[Prop 3.1]{FPS}).  

\begin{defn}
\label{defn:grading}
The natural right action $V_r(\bG_a) \times \bA^1 \to V_r(\bG_a)$ sends \\
$(\nu: \bG_{a(r),A} \to \bG_{a(r),A}, \alpha \in A)$ to $\alpha\cdot \nu: \bG_{a(r),A} \to \bG_{a(r),A}$ defined by $(\alpha\cdot \nu)^*(t) = \nu^*(\alpha\cdot t)$.
Composing this  with 1-parameter subgroups $\mu: \bG_{a(r),A} \to \bG_{(r),A}$ defines
a right action $V_r(\bG) \times \bA^1 \to V_r(\bG)$.  This action corresponds to a grading 
on  the commutative algebra $k[V_r(\bG)]$.  

We set 
$$\bP V_r(\bG) \quad \equiv \quad \Proj k[V_r(\bG)].$$
If $\mu: \bG_{(r),R} \to \bG_R$ is an $R$-point of $V_r(\bG)$ for some commutative $k$-algebra $R$
 and $s$ an $R$-point of $A^1$ (i.e., an element $s \in R$),
then  $(s \circ \mu)_*(u_{j}) = s^{p^i}\mu_*(u_{j})$ for $u_j\in k\bG_{a(r)} = k[u_0,\cdots,u_{r-1}]/(u_i^p).$
(See the proof of \cite[Prop 6.5]{SFB2}).)
\end{defn}

\vskip .1in

We rephrase Lemma 1.12 of \cite{SFB1} in order to elaborate upon this grading of $V_r(\bG)$.

\begin{prop}
\label{prop:grading}
The commutative algebra $k[V_r(GL_N)]$ is a graded quotient of the symmetric algebra over $k$
generated with generators 
$\{ X_{i,j}^\ell; 1 \leq i,j, \leq N, 0 \leq \ell < r \}$ provided that $X_{i,j}^\ell$ is given degree $p^\ell$.
Moreover, if $\bG \subset GL_N$ is a closed immersion, then $k[V_r(\bG)]$ is a graded quotient 
of $k[V_r(GL_N)]$.

Consequently, for any affine group scheme $\bG$ of finite type over $k$, $k[V_r(\bG)]$ is a 
graded commutative algebra generated by homogeneous elements of degrees $p^i, \ 0 \leq i < r$.
\end{prop}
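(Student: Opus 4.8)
The plan is to recall the explicit description of $k[V_r(GL_N)]$ from \cite[Lem 1.12]{SFB1} and then transfer the grading statement to an arbitrary $\bG$ of finite type via a closed immersion $\bG \hookrightarrow GL_N$. First I would set up the universal $1$-parameter subgroup of $GL_N$: a height $r$ $1$-parameter subgroup $\mu: \bG_{a(r),A} \to GL_{N,A}$ is determined, by Lemma 1.12 of \cite{SFB1}, by an $r$-tuple of pairwise commuting matrices $(B_0,\dots,B_{r-1})$ with $B_i \in M_N(A)$ all $p$-nilpotent (equivalently, $B_i^{[p]}=0$), where $B_i$ records the ``coefficient of $t^{p^i}$'' of $\mu$. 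Writing $B_\ell = (X_{i,j}^\ell)_{i,j}$, the coordinate algebra $k[V_r(GL_N)]$ is the quotient of the polynomial algebra $k[X_{i,j}^\ell : 1\le i,j\le N,\ 0\le \ell<r]$ by the ideal generated by the entries of the commutators $[B_\ell,B_m]$ and of the $p$-th power maps $B_\ell^{[p]}$. So the first step is simply to observe that these relations are homogeneous once one assigns $\deg X_{i,j}^\ell = p^\ell$: this is forced by the grading in Definition \ref{defn:grading}, since under the $\bA^1$-action $s\cdot\mu$ one has $(s\cdot\mu)_*(u_\ell) = s^{p^\ell}\mu_*(u_\ell)$, i.e. $B_\ell \mapsto s^{p^\ell}B_\ell$. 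The commutator $[B_\ell,B_m]$ scales by $s^{p^\ell + p^m}$ (homogeneous of mixed degree $p^\ell+p^m$ in each entry) and $B_\ell^{[p]}$, being the $p$-fold product of $B_\ell$ in a commutative-enough setting, scales by $s^{p\cdot p^\ell}=s^{p^{\ell+1}}$; both are homogeneous, so the defining ideal is homogeneous and $k[V_r(GL_N)]$ inherits a grading with generators in degrees $p^0,\dots,p^{r-1}$.

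Next, for a closed immersion $i: \bG \hookrightarrow GL_N$, I would note that restriction of $1$-parameter subgroups along $i$ gives a natural transformation of functors $V_r(\bG) \to V_r(GL_N)$, hence a $k$-algebra map $k[V_r(GL_N)] \to k[V_r(\bG)]$. The content is that this map is surjective: a height $r$ $1$-parameter subgroup $\mu: \bG_{a(r),A} \to \bG_A$ is the same datum as one into $GL_{N,A}$ that happens to factor through $\bG_A$, and the condition ``factors through the closed subscheme $\bG$'' is the vanishing of the pullbacks along $\mu$ of the equations cutting out $\bG$ in $GL_N$; thus $k[V_r(\bG)]$ is the quotient of $k[V_r(GL_N)]$ by the ideal generated by these pulled-back equations. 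Since the $\bA^1$-actions on $V_r(\bG)$ and $V_r(GL_N)$ are compatible (both come from the action on $V_r(\bG_a)$ by precomposition), this quotient map is a map of graded algebras, so $k[V_r(\bG)]$ is a graded quotient of $k[V_r(GL_N)]$, proving the second assertion.

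Finally, the consequence for arbitrary affine $\bG$ of finite type over $k$: every such group scheme admits a closed immersion into some $GL_N$ (a standard fact — a finite type affine group scheme over a field has a faithful finite-dimensional representation), so the previous paragraph applies and exhibits $k[V_r(\bG)]$ as a graded quotient of $k[V_r(GL_N)]$, hence as a graded commutative algebra generated in degrees $p^0,\dots,p^{r-1}$. The main obstacle — really the only nonroutine point — is the surjectivity of $k[V_r(GL_N)] \to k[V_r(\bG)]$, i.e. checking that the moduli description of $V_r(\bG)$ is genuinely a closed subscheme of that of $V_r(GL_N)$ cut out by the evident relations; this is exactly the content invoked from \cite[Lem 1.12]{SFB1}, and the rest is bookkeeping about homogeneity of the defining relations under the weighting $\deg X_{i,j}^\ell = p^\ell$. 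One should be slightly careful that the $p$-power relation $B_\ell^{[p]}=0$ is genuinely weight-homogeneous of weight $p^{\ell+1}$ and not merely a sum of monomials of varying weight; this follows because each entry of $B_\ell^{[p]}$ is a sum of degree-$p$ monomials in the entries of $B_\ell$, every one of which has weight $p\cdot p^\ell$.
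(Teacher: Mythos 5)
Your proposal is correct and follows essentially the same route as the paper, which gives no proof of its own beyond the citation of \cite[Lem 1.12]{SFB1}: you unwind that citation by identifying $V_r(GL_N)$ with the scheme of $r$-tuples of commuting $p$-nilpotent matrices, check that the defining relations and the ideal cutting out $V_r(\bG)$ are homogeneous for the weighting $\deg X_{i,j}^\ell = p^\ell$ (equivalently, that the relevant closed immersions are $\bA^1$-equivariant), and conclude via the existence of a faithful finite-dimensional representation. The homogeneity of $B_\ell^{[p]}$, which you flag, is indeed the only point needing care, and your justification of it is right.
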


The following theorem summarizes fundamental results of Suslin, Bendel, and the author 
which relate 1-parameter subgroups of $\bG$ of height $r$ to cohomological support varieties of $\bG_{(r)}$.

\begin{thm}
\label{thm:fundamental} \cite[Thm 5.2]{SFB1}, \cite[Thm 6.7]{SFB2}, \cite[Cor 6.8]{SFB2}
Let $\bG$ be an affine group scheme of finite type over $k$, $r$ a positive integer, and 
 $M$ a finite dimensional $\bG_{(r)}$-module.  There is a natural morphism of commutative
 graded $k$-algebras \ $\psi: H^{ev}(\bG_{(r)},k) \ \to \ k[V_r(\bG)]$ of degree $\frac{p^r}{2}$ 
 with associated morphism  $\Psi: V_r(\bG) \ \to \ \Spec H^{ev}(\bG_{(r)},k)$ of schemes
 which is a homeomorphism on scheme-theoretic points with the Zariski topology.  
 This morphism restricts to a homeomorphism of closed subspaces
 $$\Psi: V_r(\bG)_M \quad \stackrel{\approx}{\to} \quad \Spec H^{ev}(\bG_{(r)},k)_M,$$
 where 
 $$V_r(\bG)_M \quad  = \quad \{ x \in V_r(\bG): \ J_{\bG,r,M}(x) \not= \frac{dim M}{p} \cdot [p]\}$$
 and $\Spec H^{ev}(\bG_{(r)},k)_M$ is the (affine) cohomological support variety of $M$.
  
Moreover, $\psi$ determines the ``projectivized morphism" \\ $\Psi: \Proj V_r(\bG) \ \to \ \Proj H^{ev}(\bG_{(r)},k)$ 
which restricts to a homeomorphism \\
 $\Psi: \Proj V_r(\bG)_ M \ \stackrel{\approx}{\to} \ \Proj H^{ev}(\bG_{(r)},k)_M.$
\end{thm}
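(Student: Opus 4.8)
This statement is a compilation: the assertions about $\psi$ and $\Psi$ are \cite[Thm 5.2]{SFB1}, the support statement is \cite[Thm 6.7, Cor 6.8]{SFB2} (compare \cite[\S 3]{FPS}), and the projectivized version then follows formally; the plan is to indicate the assembly and isolate the one point of translation. Applying cohomology to the universal height $r$ $1$-parameter subgroup $\mathfrak u: \bG_{a(r),k[V_r(\bG)]} \to \bG_{k[V_r(\bG)]}$ (the tautological $k[V_r(\bG)]$-point provided by Definition \ref{defn:VrG}) gives a homomorphism $H^{ev}(\bG_{(r)},k) \to H^{ev}(\bG_{a(r)},k)\otimes k[V_r(\bG)]$ of graded $k[V_r(\bG)]$-algebras; extracting the leading coefficient with respect to the distinguished top polynomial generator of $H^{ev}(\bG_{a(r)},k)$ (see \cite[\S\S 1, 5]{SFB1} for the precise recipe, which also accounts for the ``degree $\frac{p^r}{2}$'' normalization) produces the homomorphism $\psi$ and hence the morphism $\Psi$. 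That $\Psi$ is a homeomorphism on scheme-theoretic points is \cite[Thm 5.2]{SFB1}: injectivity of $\psi$ modulo nilpotents is the content of the strong detection theorem for the cohomology of $\bG_{(r)}$ on height $r$ $1$-parameter subgroups, while finiteness and surjectivity of $\Psi$ are proved first for $\bG = GL_N$ by explicit computation and then for a closed subgroup $\bG \subset GL_N$ via the functoriality of Proposition \ref{prop:grading}.

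For the support statement the only thing requiring comment is that the Jordan type description of $V_r(\bG)_M$ agrees with the rank--variety description of \cite{SFB2}. For a scheme-theoretic point $x$, Proposition \ref{prop:Young}(1) shows that the $k(x)[u]/u^p$-module $((\mu_x)_* \circ \epsilon_r)^*(k(x)\otimes M)$ of Definition \ref{defn:local-Jordan} is free exactly when $JT_{\bG,r,M}(x) = \tfrac{\dim M}{p}\cdot[p]$; by the rank--variety theory of \cite{SFB2} (compare \cite[\S 3]{FPS}) this is in turn equivalent to the restriction of $k(x)\otimes M$ along $\mu_x$ — i.e. the $k(x)\otimes k\bG_{a(r)}$-module obtained from $(\mu_x)_*$ — being free. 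Hence $V_r(\bG)_M = \{x : JT_{\bG,r,M}(x)\neq\tfrac{\dim M}{p}[p]\}$ is precisely the rank variety of $M$, which \cite[Thm 6.7, Cor 6.8]{SFB2} identify via $\Psi$ with the cohomological support variety $\Spec H^{ev}(\bG_{(r)},k)_M$; in particular $V_r(\bG)_M$ is closed and $\Psi$ restricts to a homeomorphism onto it.

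The projectivized assertion is obtained by passing to $\Proj$. Since $\psi$ is homogeneous for the grading on $k[V_r(\bG)]$ of Definition \ref{defn:grading} (with generators in degrees $p^i$, $0\le i<r$, by Proposition \ref{prop:grading}) and the internal grading on $H^{ev}(\bG_{(r)},k)$, it induces $\Psi: \bP V_r(\bG) = \Proj k[V_r(\bG)] \to \Proj H^{ev}(\bG_{(r)},k)$, and a homeomorphism of affine cones that intertwines the defining grading actions induces a homeomorphism of the associated $\Proj$'s (this is the content of \cite[Prop 3.1]{FPS}). Finally $V_r(\bG)_M$ is conical: by the identity $(s\circ\mu)_*(u_j) = s^{p^i}\mu_*(u_j)$ of Definition \ref{defn:grading}, for $s \ne 0$ the $p$-nilpotent operator attached to $s\cdot x$ is a nonzero scalar multiple of the one attached to $x$, so $JT_{\bG,r,M}$ is constant on the orbits of the $\bA^1$-action away from the origin and $V_r(\bG)_M$ descends to a closed subset $\bP V_r(\bG)_M$; the homeomorphism of the previous paragraph then restricts to $\Psi: \bP V_r(\bG)_M \xrightarrow{\ \approx\ } \Proj H^{ev}(\bG_{(r)},k)_M$. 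The one genuinely deep input is \cite[Thm 5.2]{SFB1} — that $\Psi$ is a homeomorphism — which rests on the detection theorem for the cohomology of infinitesimal group schemes together with the explicit $GL_N$ computation; everything else above is bookkeeping and the elementary observation that freeness over $k(x)[u]/u^p$ records maximal Jordan type.
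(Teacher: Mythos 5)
Your proposal is correct and follows essentially the same route as the paper, which states this theorem as a summary of the cited results \cite[Thm 5.2]{SFB1} and \cite[Thm 6.7, Cor 6.8]{SFB2} without giving an independent proof. Your assembly — construction of $\psi$ from the universal $1$-parameter subgroup, the detection/isogeny argument for $\Psi$, the elementary equivalence between freeness over $k(x)[u]/u^p$ and Jordan type $\frac{\dim M}{p}\cdot[p]$, and the conical descent to $\Proj$ — is exactly the intended reading of those citations.
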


\vskip .1in

\begin{remark}
\label{rem:Ngo}
We refer the reader to \cite{Ngo} for some explorations of $V_r(\bG)$, including the
reducibility and lack of equi-dimensionality in general, and some bounds on dimensions.
\end{remark}

\vskip .1in

\begin{remark}
The map $(\mu_x)_* \circ \epsilon_r$ of (\ref{eqn:lower*}) is a $\pi$-point of $\bG_{(r)}$ whose equivalence class
$[(\mu_x)_* \circ \epsilon_r]$ is a scheme-theoretic point of the $\pi$-point space $\Pi(\bG_{(r)})$.  
This determines the map of points induced by the 
homeomorphism 
\begin{equation}
\label{eqn:compose}
 \bP V_r(\bG) \quad \stackrel{\approx}{\to} \quad \bP H^*(\bG_{(r)},k) \quad \stackrel{\sim}{\to} \quad \Pi(\bG_{(r)})
 \end{equation}
 obtained by composing $\Psi$ of Theorem \ref{thm:fundamental}
with the natural isomorphism 
$\bP H^*(\bG_{(r)},k) \stackrel{\sim}{\to} \Pi(\bG_{(r)}) $ of \cite[Thm 7.5]{FP1}.

Observe that sending the 1-parameter subgroup $\mu_x: \bG_{a(r),k(x)} \to \bG_{k(x)}$ 
to the equivalence class of the $\pi$-point 
\ $(\mu_x)_*\circ \epsilon_r: k(x) \otimes k[u]/u^p \to  k(x) \otimes k\bG_{a(r)}  \to  k(x) \otimes k\bG$ \
determines a natural projection 
\begin{equation}
\label{eqn:proj}
V_r(\bG) {\text -} \{ 0 \} \quad \to \quad \bP V_r(\bG).
\end{equation}

For any finite dimensional $\bG_{(r)}$-module $M$, the maps of  (\ref{eqn:compose}) restrict to   
homeomorphisms of corresponding support varieties for $M$
$$
\bP V_r(\bG) _M\ \stackrel{\approx}{\to} \ \bP H^*(\bG_{(r)},k)_M \ \stackrel{\sim}{\to} \ \Pi(\bG_{(r)})_M.
$$
\end{remark}

\vskip .1in

\begin{remark}
\label{rem:pi-points}
The more general theory of $\pi$-points supports developed by Pevtsova and the author in \cite{FP1}, 
$M \to \Pi(H)_M$, applies to $H$-modules
for an arbitrary finite group scheme $H$.    A disadvantage of considering $M \to \Pi(H)_M$ rather than 
$M \mapsto V_r(\bG)_M$ is that points of $\Pi(H)$ are equivalence classes 
of certain flat maps of $K$-algebras of the form $\alpha_K: K[u]/u^p \to KH$ for field extensions $K/k$.
For maximal or generic points of $\Pi(H)$, the Jordan type of  a finite dimensional $H$-module $M$
is independent of the choice of representative of such an an equivalence class $[\alpha_K]$, but 
for other points of $\Pi(H)$ the Jordan type of $M$ depends upon a choice of 
representative of the equivalence class.  See \cite{FPS}).
\end{remark}
\vskip .1in

To emphasize the subtlety of the geometry of $\bP V_r(\bG)$, we analyze the very
special example of $\bP V_r(\bG_a)$.

\begin{ex}
\label{ex:weight}
We identify the graded coordinate algebra of $V_r(\bG_a) = V_r(\bG_{a(r)})$
with the graded polynomial algebra $k[T_0,T_1,\ldots,T_{r-1}]$, 
where $T_i$ is given degree $p^i$.  Then $\Proj$ applied to the graded algebra 
is the weighted projective space $w\bP(1,p,\ldots,p^{r-1})$.  Let $k[t_0,t_1,\ldots, t_{r-1}]$
denote the graded polynomial algebra with each $t_i$ of degree 1.
The graded map of $k$-algebras 
$$\phi^*: k[T_0,T_1,\ldots,T_{r-1}] \ \to \ k[t_0,t_1,\ldots, t_{r-1}], \quad T_i \mapsto t_i^{p^i}$$
determines a ramified covering map
$$\Phi:   \bP^{r-1} \quad \to \quad w\bP(1,p,\ldots,p^{r-1}).$$  
Similarly, the map of $k$-algebras 
$$\psi^*: k[t_0,t_1,\ldots,t_{r-1}] \ \to \ k[T_0,T_1,\ldots, T_{r-1}], \quad t_i \mapsto T_i^{p^{r-i-1}}$$
multiplies all degrees by $p^{r-1}$ and thus induces a morphism
$$\Psi: w\bP(1,p,\ldots,p^{r-1}) \quad \to \quad \bP^{r-1}.$$ 
Moreover,
$$\Psi \circ \Phi = F^{r-1}:\bP^{r-1} \ \to \bP^{r-1}, \quad \Phi \circ \Psi = F^{r-1}: 
w\bP(1,p,\ldots,p^{r-1}) \to w\bP(1,p,\ldots,p^{r-1}).$$

 This will enable us to compare Jordan type functions for $(\bG_a^{\times r})_1$
and $\bG_{a(r)}$  in Example \ref{ex:Ga}.
\end{ex}

\vskip .1in
\begin{ex}
\label{ex:sing}
Consider the case $r=3$, so that $\bP V_3(\bG_a)  \ = \ w\bP(1,p,p^2)$ is 2-dimensional,
covered by three affine open subsets:
$$U_0 \ = \ \Spec k[T_1/T_0^p,T_2/T_0^{p^2}] \ \simeq \ \bA^2,$$
$$U_1 \ = \ \Spec k[T_0^p/T_1,T_2/T_1^p] \ \simeq \ \bA^2,$$
$$U_2 \ = \ \Spec k[T_0^{ip}T_1^{p-i}/T_2; \ 0 \leq i \leq p].$$
Observe that $U_2 \ \subset \ w\bP(1,p,p^2)$ has an isolated singularity at $T_0 = 0 = T_1$;
the maximal ideal at this singular point is generated by the $p+1$ elements  \\
$T_0^{p^2}/T_2, T_0^{p^2-p}T_1/T_2,\ldots, T_0^pT_1^{p-1}/T_2,T_1^p/T_2$.
\end{ex}

\vskip .1in

The representability by $k[V_r(\bG)]$ of the functor sending a commutative $k$-algebra $A$ to 
the set of all height $r$ 1-parameter subgroups $\bG_{a(r),A} \to \bG_A$ (over $\Spec A$) leads to  
the universal 1-parameter subgroup $\cU_{\bG,r}$  as recalled in the following definition.

\begin{defn} (\cite[\S 2]{FP3})
\label{defn:univ}
Let $\bG$ be an affine group scheme of finite type
over $k$ and let $r$ be a chosen positive integer.  Then the universal 
1-parameter subgroup of height $r$ for $\bG$, 
\begin{equation}
\label{eqn:univG}
\cU_{\bG,r}:  \bG_{a(r),k[V_r(\bG)] } \ \to \bG_{(r),k[V_r(\bG)]},
\end{equation}
is the 1-parameter subgroup for the group scheme $\bG_{(r),k[V_r(\bG)]}$ over
$k[V_r(\bG)]$ corresponding to the identity map $V_r(\bG) \to V_r(\bG)$.
By the universality of $\cU_{\bG,r}$, for any scheme-theoretic point $x \in V_r(\bG)$
the restriction along $\chi_x: \Spec k(x) \to V_r(\bG)$ of $\cU_{\bG,r}$ equals
the map $\mu_x: \bG_{a(r),k(x)} \ \to \bG_{(r),k(x)}$ of Definition \ref{defn:VrG}.

We denote by 
\begin{equation}
\label{eqn:UGr}
(\cU_{\bG,r})_*: k[V_r(\bG)] \otimes k\bG_{a(r)} \quad \to \quad k[V_r(\bG)]\otimes k\bG_{(r)}
\end{equation}
the map of group algebras over $k[V_r(\bG)]$ induced by $\cU_{\bG,r}$. 
In other words, $(\cU_{\bG,r})_*$ is the $k[V_r(\bG)]$-dual of the map on coordinate algebras
$\cU_{\bG,r}^*: k[V_r(\bG)] \otimes k[\bG_{(r)}] \to k[V_r(\bG)] \otimes k[\bG_{a(r)}]$
given by $\cU_{\bG,r}$.
\end{defn}

\vskip .1in

By \cite[Prop 2.9]{FP3}, a closed embedding $i: \bH \hookrightarrow \bG$ of affine group schemes of 
finite type over $k$ determines the commutative square 
\begin{equation}
\label{eqn:restrict-square}
 \xymatrixcolsep{5pc}\xymatrix{
\bG_{a(r),k[V_r(\bG)]}  \ar[d] \ar[r]^{(\cU_{\bG,r})_*} &   \bG_{(r),k[V_r(\bG)]} \ar[d] \\
 \bG_{a(r),k[V_r(\bH)]} \ar[r]^{(\cU_{\bH,r})_*} &  \bH_{(r),k[V_r(\bH)] } ] .
}
\end{equation}

\vskip .1in

We recall and reformulate the ``universal $p$-nilpotent operator" $\Theta_{\bG,r}$ for $\bG_{(r)}$-modules 
introduced in \cite[Defn 2.1]{FP3}.

\begin{defn}
\label{defn:univ-Theta}
Let $\bG$ be an affine group scheme of finite type
over $k$ and let $r$ be a chosen positive integer.  
 We define 
\begin{equation}
\label{eqn:theta}
\Theta_{\bG,r} \quad  \equiv \quad (\cU_{\bG,r})_*(1\otimes u_{r-1}) \quad \in \quad 
k[V_r(\bG)]\otimes k\bG_{(r)}\ \equiv \ k\bG_{(r),k[V_r(\bG)]},
\end{equation}
where $(\cU_{\bG,r})_*$ is defined in (\ref{eqn:UGr}).  
\end{defn}

\vskip .in

In \cite[Ex 2.6]{FP3}, the reader will find a few explicit computations of $\Theta_{\bG,r}$.  We mention two
such examples consistent with the indexing used in this paper.

\begin{ex}
\label{ex:Theta}
Consider $\bG = \bG_a^{\times s}$ whose Lie algebra is the dimension $s$ Lie algebra $\fg_a^{\oplus s}$
(which is commutative, with $p$-th power map trivial).  Identify $V_1(\bG)$ with 
$\bA^r = \Spec k[t_0,\ldots,t_{s-1}]$
and identify the group algebra $k\bG_{(1)}$ with $k[x_0,\ldots,x_{s-1}]/(x_i^p) \\
= \ \fu(\fg_a^{\oplus s})$, the restricted enveloping algebra of the restricted Lie algebra $\fg_a^{\oplus s}$.
Then   
$$\Theta_{\bG,1} \ = \ \sum_{i=0}^{r-1} t_i \otimes x_i \ \in \ k[V_1(\bG_a] \otimes \fu(\fg_a^{\oplus s}).$$

Consider $\bG = \bG_a$ whose Lie algebra is $\fg_a$ and some $r > 0$.  
Identify $V_r(\bG_a)$ with $\Spec k[T_0,\ldots,T_{r-1}]$
with $T^i$ given grading $p^i$ and identify the group algebra $k\bG_{(r)}$ with $k[u_0,\ldots,u_{r-1}]/(u_i^p)$.  
Then
$$\Theta_{\bG.r} \ = \ \sum_{i=0}^{r-1} T_i^{p^{r-1-i}}\otimes u_i \ + \ \cdots 
\quad \in \ k[V_r(\bG_a)] \otimes k\bG_{a(r)},$$
where the non-explicit, remaining terms of $\Theta_{\bG_a,r}$ each consist of a
polynomial homogeneous of weighted degree  $p^{r-1}$ in the $T_i$'s  tensor a monomial in the $u_i$'s
of degree at least 2.  (See \cite[Eqn 6.5.1]{SFB2}.)
\end{ex}

\vskip .1in

 The somewhat subtle aspect of Theorem \ref{thm:theta}
is its last assertion of homogeneity of degree $p^{r-1}$ whose proof is given in \cite{FP3} (reflected in 
the examples above).

\begin{thm} (\cite{FP3})
\label{thm:theta}
Let $\bG$ be an affine group scheme of finite type
over $k$ and let $r$ be a chosen positive integer.  
Then $\Theta_{\bG,r} $ as given in Definition \ref{defn:univ-Theta} satisfies the following properties.
\begin{enumerate}
\item
For any scheme-theoretic point $x \in V_r(\bG)$, the specialization along $\chi_x: \Spec k(x) \to V_r(\bG)$ of 
$\Theta_{\bG,r} \in  k[V_r(\bG)]\otimes k\bG_{(r)}$ equals 
$(\mu_{x})_*(1\otimes u_{r-1}) \quad \in  \ k(x) \otimes k\bG_{(r)}$,
where $\mu_x$ is the 1-parameter subgroup of Definition \ref{defn:VrG}.
\item
Let $\phi: \bH \hookrightarrow \bG$ be a closed subgroup scheme determining
$\phi^*: k[V_r(\bG)] \twoheadrightarrow k[V_r(\bH)]$ and $\phi_{(r)*}: k\bH_{(r)} \hookrightarrow k\bG_{(r)}$.  
Then the restriction of $\Theta_{\bG,r}$ along $\phi^*$, 
$(\phi^* \otimes 1)(\Theta_{\bG,r}) \in k[V_r(\bH)] \otimes k\bG_{(r)}$, equals the image of 
 $(1\otimes \phi_{(r)*})(\Theta_{\bH,r})$.
\item
For any $\bG_{(r)}$-module $M$, $\Theta_{\bG,r}$
determines a $p$-nilpotent, $k[V_r(\bG)]$-linear operator
$\Theta_{\bG,r,M}: k[V_r(\bG)] \otimes M \ \to \ k[V_r(\bG)] \otimes M$.
\item
The base change along a scheme theoretic point $x \in V_r(\bG)$ of $\Theta_{\bG,r,M}$ equals the action of
$(\mu_x)_*(1\otimes u_{r-1})$ on $k(x) \otimes M$.  
\item
$\Theta_{\bG,r}$ is of the form $\sum f_i\otimes \phi_i \in k[V_r(\bG)]\otimes k\bG_{(r)}$ with each
$f_i$ homogeneous of degree $p^{r-1}$ in $k[V_r(\bG)]$ and each $\phi_i \in k\bG_{(r)}$
an element of the augmentation ideal $I_{\bG_{(r)}}$ of $k\bG_{(r)}$.
\end{enumerate}
\end{thm}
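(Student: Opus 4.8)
The plan is to derive each of the five assertions by unwinding Definitions \ref{defn:univ} and \ref{defn:univ-Theta} together with the results of \cite{SFB1}, \cite{SFB2} and \cite{FP3} already quoted above; I expect the weighted-homogeneity claim in part (5) to be the only step requiring real input, and hence the main obstacle. First consider (1) and (4). By Definition \ref{defn:univ} the restriction of $\cU_{\bG,r}$ along $\chi_x\colon\Spec k(x)\to V_r(\bG)$ is the $1$-parameter subgroup $\mu_x$, and passing from a morphism of group schemes to its dual map of group algebras commutes with base change; hence $\chi_x^{*}\circ(\cU_{\bG,r})_{*}=(\mu_x)_{*}$ as maps $k(x)\otimes k\bG_{a(r)}\to k(x)\otimes k\bG_{(r)}$. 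Evaluating both sides at $1\otimes u_{r-1}$ yields (1). For (4), $\Theta_{\bG,r,M}$ is by construction the action on $k[V_r(\bG)]\otimes M$ of the group-algebra element $\Theta_{\bG,r}\in k\bG_{(r),k[V_r(\bG)]}$, and since forming the module action also commutes with base change, specializing along $x$ produces the action of $\chi_x^{*}(\Theta_{\bG,r})=(\mu_x)_{*}(1\otimes u_{r-1})$ on $k(x)\otimes M$, using (1).

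For (3), note that $\Theta_{\bG,r}$ lies in $k\bG_{(r),k[V_r(\bG)]}$ and that for any $\bG_{(r)}$-module $M$ the $k[V_r(\bG)]$-module $k[V_r(\bG)]\otimes M$ is a module over this group algebra, so $\Theta_{\bG,r}$ acts $k[V_r(\bG)]$-linearly; moreover $\cU_{\bG,r}^{*}$ is a map of Hopf algebras, so its $k[V_r(\bG)]$-linear dual $(\cU_{\bG,r})_{*}$ is again a map of Hopf algebras, in particular a unital algebra homomorphism, whence $\Theta_{\bG,r}^{p}=(\cU_{\bG,r})_{*}(1\otimes u_{r-1}^{p})=0$ because $u_{r-1}^{p}=0$ in $k\bG_{a(r)}=k[u_0,\dots,u_{r-1}]/(u_i^{p})$; thus $\Theta_{\bG,r,M}$ is $p$-nilpotent. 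For (2) I would chase the element $1\otimes u_{r-1}$ through the commutative square (\ref{eqn:restrict-square}) of \cite[Prop 2.9]{FP3}, which relates $(\cU_{\bG,r})_{*}$ and $(\cU_{\bH,r})_{*}$ under base change along $\phi^{*}\colon k[V_r(\bG)]\twoheadrightarrow k[V_r(\bH)]$ and the inclusion $\phi_{(r)*}\colon k\bH_{(r)}\hookrightarrow k\bG_{(r)}$; since the map $k[V_r(\bG)]\otimes k\bG_{a(r)}\to k[V_r(\bH)]\otimes k\bG_{a(r)}$ is $\phi^{*}\otimes\id$ and so fixes $1\otimes u_{r-1}$, commutativity of the square produces exactly the identity $(\phi^{*}\otimes 1)(\Theta_{\bG,r})=(1\otimes\phi_{(r)*})(\Theta_{\bH,r})$.

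It remains to address (5). The augmentation-ideal statement is straightforward: $u_{r-1}$ lies in the augmentation ideal $I_{\bG_{a(r)}}$ of $k\bG_{a(r)}$ (it is dual to $t^{p^{r-1}}$, not to $1$), and the Hopf-algebra map $(\cU_{\bG,r})_{*}$ respects counits, so $\Theta_{\bG,r}\in k[V_r(\bG)]\otimes I_{\bG_{(r)}}$; choosing the $\phi_i$ to form a $k$-basis of $I_{\bG_{(r)}}$ in the expansion $\Theta_{\bG,r}=\sum_i f_i\otimes\phi_i$, it suffices to see that each $f_i$ is homogeneous of weighted degree $p^{r-1}$ in $k[V_r(\bG)]$. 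This is the delicate point. One route is to combine (1) with the scaling identity $(s\circ\mu)_{*}(u_{r-1})=s^{p^{r-1}}\mu_{*}(u_{r-1})$ recorded in Definition \ref{defn:grading} (coming from \cite[Prop 6.5]{SFB2}): it shows that the value of $\Theta_{\bG,r}$ at $s\cdot x$ equals $s^{p^{r-1}}$ times its value at $x$ for every scheme-theoretic point $x$ and every scalar $s$, which is precisely the homogeneity of each $f_i$ in weighted degree $p^{r-1}$. Alternatively, one reads the explicit shape of $\cU_{GL_N,r}$ off \cite[Eqn 6.5.1]{SFB2} and transfers it to a closed subgroup $\bG\subset GL_N$ via part (2) and Proposition \ref{prop:grading}. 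Either way the crux---excluding components of $\Theta_{\bG,r}$ of weight different from $p^{r-1}$---rests on the analysis of the universal $1$-parameter subgroup carried out in \cite{SFB2} and \cite{FP3}, so for a self-contained argument I would invoke the proof given there.
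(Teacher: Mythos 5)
Your proposal is correct and follows essentially the same route as the paper: parts (1) and (4) via base change of the universal $1$-parameter subgroup, (2) by chasing $1\otimes u_{r-1}$ through the square (\ref{eqn:restrict-square}), (3) from the multiplicativity of $(\cU_{\bG,r})_*$ applied to $(1\otimes u_{r-1})^p=0$, and (5) by reducing the weighted homogeneity to the compatibility of the $\bA^1$-actions on $\bG_{a(r)}$ and $V_r(\bG)$, with the crux deferred to \cite{SFB2}/\cite{FP3} exactly as the paper defers to \cite[Prop 2.11]{FP3}. The only minor caution is that in (5) the scaling identity should be invoked for arbitrary $R$-points (as in Definition \ref{defn:grading}) rather than only field-valued points, since $k[V_r(\bG)]$ need not be reduced.
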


\begin{proof}
Since $\mu_x$ is the pull-back along $\chi_x$ of $\cU_{\bG,r}$, the following diagram commutes:
\begin{equation}
\label{eqn:pullback-theta}
 \xymatrixcolsep{5pc}\xymatrix{
k[V_r(\bG)] \otimes k\bG_{a(r)} \ar[d]^{\chi^*} \ar[r]^-{(\cU_{\bG,r})_*} & k[V_r(\bG)] \otimes k\bG_{(r)} \ar[d]^{\chi^*} \\
 k(x) \otimes k\bG_{a(r)} \ar[r]^-{(\mu_x)_*} &  k(x) \otimes k\bG_{(r)}. 
} 
\end{equation}
Consequently, the restriction of $\Theta_{\bG,r} = (\cU_{\bG,r})_*(1\otimes u_{r-1})$ equals
$(\mu_x)_*(1\otimes u_{r-1})$.

The naturality of $\bG \mapsto \Theta_{\bG,r}$ with respect to closed 
embeddings $\phi: \bH \hookrightarrow \bG$ as in assertion (2)
follows from the commutativity of (\ref{eqn:restrict-square}). 

The action by $\Theta_{\bG,r,M}$ is given by the 
restriction to $\{ \Theta_{\bG,r} \} \otimes (k[V_r(\bG)] \otimes M) $ of the tensor product 
pairing
\begin{equation}
\label{eqn:bilinear}
(k[V_r(\bG)] \otimes k\bG_{(r)}) \otimes (k[V_r(\bG)] \otimes M) \quad \to \quad k[V_r(\bG)] \otimes M.
\end{equation}
The fact that this action is $p$-nilpotent follows  from the observation that the $i$-th power of 
$\Theta_{\bG,r}$ equals $(\cU_{\bG,r})_*((1\otimes u_{r-1})^i)$.
Assertion (4) follows directly from assertion (1).

The last assertion is stated and proved in \cite[Prop 2.11]{FP3}.  The proof proceeds by first observing
that it suffices to verify that the restriction of the $k[V_r(\bG)]$-linear map 
$\cU_{\bG,r}^*:k[V_r(\bG)]\otimes k[\bG_{(r)}] \to 
k[V_r(\bG)]\otimes k[\bG_{a(r)}]$ to  $1\otimes k[\bG_{(r)}]$ reads off the coefficient of $t^{p^{r-1}}$
in $k[V_r(\bG)]\otimes k\bG_{a(r)} \simeq k[V_r(\bG)]\otimes k[t]/t^{p^r}$.  The authors then
verify that the actions of $\bA^1$ on $\bG_{a(r)}$ and $V_r(\bG)$ are appropriately compatible.
\end{proof}

\vskip .1in 

\begin{defn}
\label{defn:JT}
Let $\bG$ be an affine group scheme of finite type over $k$ and let $r$ be a
positive integer.  For any finite dimensional $\bG_{(r)}$-module $M$, we define the function 
\begin{equation}
\label{eqn:JT}
JT_{\bG,r,M}(-): V_r(\bG) \quad \to \quad \cY
\end{equation}
from the set of scheme-theoretic points of $V_r(\bG)$ to the set  of points of the partially orders
set $\cY$ by sending $x\in V_r(\bG)$ to
$$JT_{\bG,r,M}(x) \quad \equiv \quad JT((\mu_x)_* \circ \epsilon_r)^*(k[V_r(\bG)] \otimes M)),$$
the Jordan type of the $k(x)[u]/u^p$-module obtained by restricting $k[V_r(\bG)] \otimes M$ along
$(\mu_x)_* \circ \epsilon_r: k(x)[u]/u^p \to k(x)\otimes k\bG_{a(r)} \to k[V_r(\bG)] \otimes k\bG_{(r)}$;
equivalently, $JT_{\bG,r,M}(x)$ is the Jordan type of $(\mu_x)_*(1\otimes u_{r-1})$ acting on $k(x) \otimes M$.

By Theorem \ref{thm:theta}(1), $JT_{\bG,r,M}(x)$
equals the Jordan type of \ $k(x) \otimes_{k[V_r(\bG)]} \Theta_{\bG,r,M} \ \in \ 
End_{k(x)}(k(x) \otimes M)$.
\end{defn}

\vskip .1in

Let $R$ be a graded, commutative $k$-algebra; for example, $R \ = \ k[V_r(\bG)]$.
 For any graded $R$-module $M_*$, we denote by $(M_*)^{\sim}$ the associated quasi-coherent
sheaf on $\Proj (R)$.  We shall often consider graded $R$-modules of the form $R\otimes V$ where 
$V$ is a $k$-vector space and $R\otimes V$ inherits the grading of $R$.

If $M_*$ is a graded $R$-module, we denote by $M_*(n)$ the graded module
obtained from $M_*$ by ``shifting down by $n$" the degrees of $M_*$; thus, $(M_*(n))_d
= M_{n+d}$.  We use $(M_*)^\sim(n)$ to denote  $(M_*(n))^\sim$.
The operator $\Theta_{\bG,r,M}$ of Theorem \ref{thm:theta}(3) multiplies degrees by $p^{r-1}$, so that
viewed as an operator on graded modules one should write $\Theta_{\bG,r,M}$ as
$$\Theta_{\bG,r,M}: k[V_r(\bG)] \otimes M \quad \to \quad (k[V_r(\bG)] \otimes M)(p^{r-1}).$$

Because $k$ has positive characteristic,
the informative survey by M. Beltrametti and L. Robbiano in \cite{B-R} does not  apply to our 
consideration of $\bP V_r(\bG)$.  Because $V_r(\bG)$ is usually not
generated by homogeneous elements of degree 1, we supplement the clear exposition of 
R. Hartshorne in \cite[\S II.5]{Har} with \cite[Ex 9.5]{Eisen} when considering the relationship 
between graded modules over a commutative,  graded $k$-algebra $R$ and sheaves of 
modules on $\Proj (R)$.  

\vskip .1in

\begin{prop}
\label{prop:weighted}
Consider a polynomial algebra $k[T_0,\ldots,T_N]$ with each $T_i$ given a  ``weighted" degree  $d_i$.   
  Assume that
$I_Y \subset k[T_0,\ldots,T_N]$ is an ideal generated by polynomials $F_j(\ul T)$ 
which are (weighted) homogeneous.   Then, the zero locus $Y = Z(I_Y) \ \subset 
\Proj k[T_0,\ldots,T_N]$ is a weighted projective variety associated to the 
commutative, graded algebra $k[T_0,\ldots,T_N]/I_Y$.

Let $d$ be a positive integer divisible by each $d_i$.
The graded $k[T_0,\ldots,T_N]/I_Y$-module $(k[T_0,\ldots,T_N]/I_Y)(d)$ 
is a locally free, coherent rank 1 sheaf of $\cO_Y$-modules on $Y$.  
We denote this sheaf by $\cO_Y(d)$.  For each $i$ such that
$T_i$ is not in the radical of $I_Y$, the restriction $(\cO_Y(d))_{|V_i}$ of $\cO_Y(d)$ to open subvariety
$$V_i \ \equiv (\Proj k[T_0,\ldots,T_N]) - Z(T_i)) \cap Y \ \subset \Proj k[T_0,\ldots,T_N]$$ 
is generated as  a free, rank 1 $\cO_{V_i}$-module by the image of $T_i^{d/d_i} \in \cO_Y(d)(Y)$ in 
$\cO_Y(d)(V_i)$.
\end{prop}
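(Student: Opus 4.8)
The plan is to reduce the statement to a purely local computation on the standard affine charts $V_i$ of $\Proj k[T_0,\ldots,T_N]$ cut out by the non-vanishing of $T_i$, exactly as in the classical (degree-one) case treated in \cite[\S II.5]{Har}, but keeping careful track of the weighted grading using \cite[Ex 9.5]{Eisen}. First I would recall that for a graded ring $S = k[T_0,\ldots,T_N]/I_Y$ the sheaf $\widetilde{M}$ associated to a graded module $M$ is characterized on the basic open set $D_+(f)$, for $f$ homogeneous of positive degree, by $\widetilde{M}(D_+(f)) = M_{(f)}$, the degree-zero part of the localization $M_f$; and $\cO_Y$ itself is $\widetilde{S}$, so $\cO_Y(D_+(T_i)) = S_{(T_i)}$.

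The key steps, in order, are as follows. First, observe that since each $F_j$ is weighted homogeneous, $I_Y$ is a graded ideal, so $S = k[T_0,\ldots,T_N]/I_Y$ is a graded $k$-algebra and $Y = \Proj S$ is well-defined as a closed subscheme of $\Proj k[T_0,\ldots,T_N]$; the $V_i$ of the statement are precisely the $D_+(T_i) \cap Y$. Second, fix $d$ divisible by all $d_i$ and consider the twisted module $M := S(d)$; I want to show $\widetilde{M}$ is a line bundle. On $V_i = D_+(T_i)$ we have $\widetilde{M}(V_i) = M_{(T_i)} = (S(d))_{(T_i)}$, the set of fractions $s/T_i^n$ with $s \in S$ homogeneous of degree $n d_i - d$ (using the shift). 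Third, I claim multiplication by $T_i^{d/d_i}$ gives an isomorphism $S_{(T_i)} \xrightarrow{\sim} M_{(T_i)}$ of $S_{(T_i)}$-modules: it is well-defined because $T_i^{d/d_i}$ is homogeneous of degree $d$, it is injective because $T_i$ is a nonzerodivisor in the localization $S_{T_i}$ (indeed a unit there), and it is surjective because any element $s/T_i^n \in M_{(T_i)}$ equals $T_i^{d/d_i} \cdot \bigl(s / T_i^{n + d/d_i}\bigr)$ with the second factor now of the correct degree $0$ in $S_{(T_i)}$. Hence $\widetilde{M}|_{V_i}$ is free of rank $1$ with generator the image of $T_i^{d/d_i}$, which is exactly the last sentence of the proposition. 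Fourth, since the $V_i$ with $T_i \notin \sqrt{I_Y}$ cover $Y$ (the $D_+(T_i)$ cover $\Proj k[T_0,\ldots,T_N]$ and those with $T_i$ a nilpotent on $Y$ contribute nothing), $\widetilde{M} = \cO_Y(d)$ is locally free of rank $1$, i.e.\ an invertible coherent $\cO_Y$-module. Coherence is automatic since $S$ is Noetherian and $M$ is finitely generated.

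The transition maps on overlaps $V_i \cap V_j$ are then given by $T_i^{d/d_i}/T_j^{d/d_j}$, a unit in $S_{(T_i T_j)}$ because its inverse $T_j^{d/d_j}/T_i^{d/d_i}$ lies in the same ring; this confirms the gluing and incidentally identifies $\cO_Y(d)$ with the restriction of the usual $\cO_{\Proj k[T_0,\ldots,T_N]}(d)$ — which is invertible precisely when $d$ is divisible by the $d_i$, see \cite[Ex 9.5]{Eisen}. I would include one sentence noting this divisibility hypothesis is exactly what makes the local generators $T_i^{d/d_i}$ have integer exponents and thus lie in $S$; without it $\cO(d)$ need only be reflexive, not locally free, on a weighted projective space.

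The main obstacle I anticipate is not conceptual but bookkeeping: being scrupulous about the degree-shift convention (so that $(S(d))_e = S_{d+e}$ versus $S_{e-d}$) and about which homogeneous fractions constitute the degree-zero part of the twisted localization, since a sign error there would make the claimed generator $T_i^{d/d_i}$ versus $T_i^{-d/d_i}$ come out wrong. The other point requiring a word of care is the possibility that some $T_i$ is nilpotent on $Y$ (i.e.\ $T_i \in \sqrt{I_Y}$), in which case $V_i = \emptyset$ and must simply be discarded from the cover; the statement already restricts to those $i$ with $T_i \notin \sqrt{I_Y}$, so one only needs to remark that these still cover $Y$. Everything else is a direct unwinding of the $\Proj$ construction and poses no real difficulty.
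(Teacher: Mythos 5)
Your proof is correct, but it takes a genuinely different route from the paper. You argue directly on the charts $V_i = D_+(T_i)\cap Y$: using the general fact that $\widetilde{M}(D_+(f)) = M_{(f)}$ for any graded ring (no degree-one generation needed), you exhibit multiplication by $T_i^{d/d_i}$ as an explicit isomorphism $S_{(T_i)} \xrightarrow{\sim} (S(d))_{(T_i)}$ and read off the generator and the transition functions $T_i^{d/d_i}/T_j^{d/d_j}$. The paper instead first reduces to the case that $I_Y$ is radical and $Y$ irreducible, then invokes \cite[Ex 9.5]{Eisen} to replace $k[T_0,\ldots,T_N]$ by the $d$-th Veronese subalgebra $k[T_0,\ldots,T_N]_{(d)}$ (which induces an isomorphism on $\Proj$), rescales the grading by $d$, and quotes \cite[Prop II.5.12]{Har} for rings generated in degree one. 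Your approach buys a self-contained argument that needs no reduction to the radical/irreducible case and makes the local generators completely explicit --- which is exactly what the last sentence of the proposition asserts and what the paper's citation-based proof leaves implicit; the paper's approach buys brevity by outsourcing the local computation to standard references. Two small points: under the paper's shift convention $(M(n))_e = M_{n+e}$, the homogeneous fractions of degree zero in $(S(d))_{T_i}$ are $s/T_i^n$ with $s \in S_{d+nd_i}$ rather than $S_{nd_i - d}$ --- you flagged this bookkeeping hazard yourself, and it does not affect the argument since $T_i^{d/d_i}$ is a unit in $S_{T_i}$ either way; and your parenthetical identification of $\cO_Y(d)$ with the restriction of $\cO(d)$ from the ambient weighted projective space, while true here, is exactly the kind of statement that fails for general twists on weighted $\Proj$, so it is best kept as the aside you made it.
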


\begin{proof}
It suffices to assume that $I_Y$ is a radical ideal .  Moreover, by considering each irreducible component
of $Y$, we may assume that $Y$ is irreducible.  Then  \cite[Ex 9.5]{Eisen} applies to verify that
 the variety $Z(I_Y)$ is isomorphic to its image in $\Proj S$,
where $S \ = k[T_0,\ldots,T_N]_{(d)} \subset k[T_0,\ldots,T_N]$ is the graded subalgebra generated by
(weighted) homogeneous polynomials each of whose weights is divisible by $d$ and 
the image of $Z(Y)$ is given by the homogeneous ideal $I_Y\cap S \subset S$ which is generated
by elements of degree $d$.  Thus, replacing $k[T_0,\ldots,T_N]$
by $S$ with grading modified by dividing degrees by $d$, we may apply \cite[Prop II.5.12]{Har}
(which is stated for graded submodules of a commutative, graded algebra generated in degree 1).
\end{proof}

In particular, Proposition \ref{prop:weighted} applies to the graded commutative algebra $k[V_r(\bG)]$
which is generated by elements homogeneous of degree $p^i, 1\leq i < r$.  (See Proposition \ref{prop:grading}.)

\vskip .1in

We next present the projectivization of the universal operator  $\Theta_{\bG,r}$.  This operator
has been used in \cite{FP3} to produce global invariants on $\bP V_r(\bG)$  for $\bG$-modules (especially, 
non-trivial vector bundles on $\bP V_r(\bG)$) in contrast to the Jordan type function
$JT_{\bG,r,M}(-)$ which is a ``point-wise invariant" on $\bP V_r(\bG)$ invariant for a $\bG$-module $M$.
See Section \ref{sec:bundles} below.

\vskip .1in

\begin{thm}
\label{thm:global-univ}
Let $\bG$ be an affine group scheme of finite type
over $k$ and let $r$ be a chosen positive integer.  
The universal operator  $\Theta_{\bG,r}$ of Theorem \ref{thm:theta} determines a global section 
$$\tilde  \Theta_{\bG,r} \quad \in \quad  \Gamma(\bP V_r(\bG),\cO_{\bP V_r(\bG)}(p^{r-1})\otimes k\bG_{(r)})$$
of the coherent, locally free sheaf $\cO_{\bP V_r(\bG)}(p^{r-1})\otimes k\bG_{(r)})$ on
$\bP V_r(\bG)$ satisfying the following properties.
\begin{enumerate}
\item
For any $\bG_{(r)}$-module $M$, tensor product with $\tilde  \Theta_{\bG,r}$ determines the 
map of $\cO_{\bP V_r(\bG)}$-modules
$$\tilde \Theta_{\bG,r,M}: (k[V_r(\bG)]\otimes M)^{\sim} \quad \to \quad (k[V_r(\bG)]\otimes M)^\sim(p^{r-1}).$$
\item
For any non-zero divisior $h \in  k[V_r(\bG)]_{p^{r-1}}$, let $U_h$ denote the affine open subset \
$Spec (k[V_r(\bG)][h^{-1}])_0 \ \subset \bP V_r(\bG)$.  The natural  isomorphism of
 $(\cO_{\bP V_r(\bG)})_{U_h}$-modules
$$h^{-1} \otimes 1: (\cO_{\bP V_r(\bG)}(p^{r-1}) \otimes k\bG_{(r)})_{|U_h} 
\quad \stackrel{\sim}{\to} \quad  (\cO_{\bP V_r(\bG)} \otimes k\bG_{(r)})_{|U_h}$$
sends 
$$(\tilde \Theta_{\bG,r})_{|U_h} \ \in \ \Gamma(U_h,\cO_{\bP V_r(\bG)}(p^{r-1}) \otimes k\bG_{(r)}) \ = \
(k[V_r(\bG)][1/h])_{p^{r-1}} \otimes k\bG_{(r)} $$
to 
$$(h^{-1}\otimes 1)\cdot\Theta_{\bG,r} \in k[V_r(\bG)][1/h]\otimes k\bG_{(r)}.$$
\item
 If $\ol x$ is a scheme-theoretic point of $\bP V_r(\bG)$ satisfying the condition that $h(\ol x) \not= 0$
and if $x$ is a scheme theoretic point of $V_r(\bG) {\text -} \{ 0 \}$ projecting to $\ol x$, 
then the restriction 
$k(x) \otimes_{\cO_{\bP V_r(\bG)}} \tilde\Theta_{\bG,r} $ along 
$\chi_{\ol x}:\Spec k(x) \to \bP V_r(\bG)$ of $\tilde  \Theta_{\bG,r}$
is given by 
\begin{equation}
\label{eqn:restrict-tildeTheta}
k(x) \otimes_{\cO_{\bP V_r(\bG)}} \tilde\Theta_{\bG,r} \quad = \quad  
c \cdot (\mu_x)_*(1 \otimes u_{r-1}) \in  k(x) \otimes  k\bG_{(r)}
\end{equation}
for some $0 \not= c \in k(x)$.
\item
For any $\bG_{(r)}$-module $M$, the restriction of $\tilde \Theta_{\bG,r,M}$ along 
$\chi_{\ol x}: \Spec k(x) \to \bP V_r(\bG)$
is a non-zero scalar multiple of the image of $(\mu_x)_*(1\otimes u_{r-1})$ in \\
$End_{k(x)}(k(x)\otimes M$).
\end{enumerate}
\end{thm}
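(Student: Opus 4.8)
The plan is to produce $\tilde\Theta_{\bG,r}$ by globalizing the affine-local pieces $h^{-1}\otimes\Theta_{\bG,r}$ over the standard open cover of $\bP V_r(\bG)$ given by the nonvanishing loci of homogeneous degree-$p^{r-1}$ elements $h\in k[V_r(\bG)]_{p^{r-1}}$. By Theorem~\ref{thm:theta}(5), $\Theta_{\bG,r}=\sum f_i\otimes\phi_i$ with each $f_i\in k[V_r(\bG)]_{p^{r-1}}$ and $\phi_i\in I_{\bG_{(r)}}$, so $\Theta_{\bG,r}$ literally is an element of $k[V_r(\bG)]_{p^{r-1}}\otimes k\bG_{(r)}$, i.e.\ of the degree-$p^{r-1}$ graded piece of the graded module $k[V_r(\bG)]\otimes k\bG_{(r)}$. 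First I would invoke Proposition~\ref{prop:weighted} (applicable by Proposition~\ref{prop:grading}, taking $d=p^{r-1}$, which is divisible by each generator degree $p^i$, $0\le i<r$, since $i<r$) to conclude that $\cO_{\bP V_r(\bG)}(p^{r-1})$ is a locally free coherent rank-$1$ sheaf, with $(\cO_{\bP V_r(\bG)}(p^{r-1}))_{|U_h}$ freely generated by the image of $h$; tensoring with the finite-dimensional $k$-vector space $k\bG_{(r)}$ keeps this locally free. The associated sheaf $(k[V_r(\bG)]\otimes k\bG_{(r)})^{\sim}(p^{r-1})$ is exactly $\cO_{\bP V_r(\bG)}(p^{r-1})\otimes k\bG_{(r)}$, and a homogeneous element of degree $p^{r-1}$ in a graded module gives a global section of its twist-by-$p^{r-1}$ sheafification; this produces $\tilde\Theta_{\bG,r}\in\Gamma(\bP V_r(\bG),\cO_{\bP V_r(\bG)}(p^{r-1})\otimes k\bG_{(r)})$, and property~(2) is then the tautological statement of how a homogeneous module element restricts to a basic open set, after identifying $\Gamma(U_h,\cO(p^{r-1})\otimes k\bG_{(r)})$ with $(k[V_r(\bG)][1/h])_{p^{r-1}}\otimes k\bG_{(r)}$ via Proposition~\ref{prop:weighted}.

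For property~(1), given a $\bG_{(r)}$-module $M$, I would take the bilinear pairing of (\ref{eqn:bilinear}) over $k[V_r(\bG)]$ and sheafify: cupping with the global section $\tilde\Theta_{\bG,r}$ of $\cO(p^{r-1})\otimes k\bG_{(r)}$ sends $(k[V_r(\bG)]\otimes M)^{\sim}$ to $(k[V_r(\bG)]\otimes M)^{\sim}(p^{r-1})$, which is the asserted map $\tilde\Theta_{\bG,r,M}$. Here I would note that this is just the sheafification of the graded-module map $\Theta_{\bG,r,M}\colon k[V_r(\bG)]\otimes M\to(k[V_r(\bG)]\otimes M)(p^{r-1})$ from Theorem~\ref{thm:theta}(3), which is $k[V_r(\bG)]$-linear and raises degree by $p^{r-1}$, so the compatibility with the grading is automatic.

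For property~(3), I would restrict to the basic open $U_h$ containing $\ol x$ (possible since $h(\ol x)\ne 0$). Over $U_h$, trivializing $\cO(p^{r-1})$ by $h$ as in property~(2), $(\tilde\Theta_{\bG,r})_{|U_h}$ corresponds to $(h^{-1}\otimes 1)\Theta_{\bG,r}\in k[V_r(\bG)][1/h]\otimes k\bG_{(r)}$. Now I would restrict further along $\chi_{\ol x}\colon\Spec k(x)\to U_h$. Choosing a point $x\in V_r(\bG)\setminus\{0\}$ over $\ol x$ and using the projection (\ref{eqn:proj}) together with the grading description in Definition~\ref{defn:grading} (namely $(s\circ\mu)_*(u_j)=s^{p^{?}}\mu_*(u_j)$), the residue-field value of $\Theta_{\bG,r}$ at $x$ is $(\mu_x)_*(1\otimes u_{r-1})$ by Theorem~\ref{thm:theta}(1), while the residue-field value of $h$ at $x$ is some nonzero scalar in $k(x)$; hence $k(x)\otimes_{\cO_{\bP V_r(\bG)}}\tilde\Theta_{\bG,r}=c\cdot(\mu_x)_*(1\otimes u_{r-1})$ with $c=h(x)^{-1}\ne 0$, which is (\ref{eqn:restrict-tildeTheta}). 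Then property~(4) follows immediately: restricting the map $\tilde\Theta_{\bG,r,M}$ along $\chi_{\ol x}$ is tensoring the residue-field value of $\tilde\Theta_{\bG,r}$ into $\End_{k(x)}(k(x)\otimes M)$, which by property~(3) and Theorem~\ref{thm:theta}(4) is $c$ times the action of $(\mu_x)_*(1\otimes u_{r-1})$.

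The main obstacle I anticipate is bookkeeping the weighted-grading subtleties: because $k[V_r(\bG)]$ is generally not generated in degree $1$, I must be careful that the sheafification $(-)^{\sim}$, the twisting conventions $(n)$, and the identification of sections over $U_h$ all behave as in the degree-$1$ case, which is why the proof should lean explicitly on Proposition~\ref{prop:weighted} and the reduction therein to the Veronese subalgebra $k[V_r(\bG)]_{(d)}$ generated in degree $d=p^{r-1}$; everything else is formal. A secondary point to verify carefully is the precise power of the scaling scalar $s$ appearing in $(s\circ\mu)_*(u_{r-1})$ from Definition~\ref{defn:grading}, but since all that is needed for (3) and (4) is that $c$ is nonzero, the exact exponent is immaterial to the statement.
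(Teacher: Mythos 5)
Your proposal is correct and follows essentially the same route as the paper's proof: homogeneity of degree $p^{r-1}$ from Theorem \ref{thm:theta}(5) yields the global section, Proposition \ref{prop:weighted} handles the weighted-grading issues and the trivialization of $\cO_{\bP V_r(\bG)}(p^{r-1})$ over $U_h$ by $h$, and assertions (1)--(4) then follow from Theorem \ref{thm:theta}(1),(3),(4) exactly as you describe. Your explicit identification $c = h(x)^{-1}$ in (3) is a correct instance of the paper's ``for some $0 \not= c$,'' and your closing caveats about the Veronese reduction and the scaling exponent are precisely the points the paper also leaves to Proposition \ref{prop:weighted} and to the observation that only the nonvanishing of $c$ matters.
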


\begin{proof}
The global sections of the sheaf $\cO_{\bP V_r(\bG)}(p^{r-1})\otimes k\bG_{(r)}$ on $\bP V_r(\bG)$
are those elements of degree 0 of the graded $k[V_r(\bG)]$-module $k[V_r(\bG)](p^{r-1})\otimes k\bG_{(r)}$.
Since $ \Theta_{\bG,r} \in k[V_r(\bG)]\otimes k\bG_{(r)}$ is homogeneous 
of degree $p^{r-1}$, it determines the global section $\Theta_{\bG,r}$ of $\cO_{\bP V_r(\bG)}(p^{r-1})\otimes k\bG_{(r)}$.

The first assertion of the theorem follows from Theorem \ref{thm:theta}(3) and the correspondence
between graded $k[V_r(\bG)]$-modules and quasi-coherent sheaves on $ \bP V_r(\bG)$.
The second assertion follows from the standard description of principal open subsets of
  $\Proj R$ associated to homogenous elements $h \in R$ as $\Spec R[h^{-1}]$.

The equality (\ref{eqn:restrict-tildeTheta}) follows from Theorem \ref{thm:theta}(1) and the identification
of assertion (2).  Assertion (4) follows immediately from assertion (3) (see Theorem \ref{thm:theta}(4)).
\end{proof}

\vskip .1in

\begin{remark}
For any $\bG_{(r)}$-module $M$ of dimension $m$ with $k$-linear dual $M^*$, 
	$$JT_{\bG,r,M}(-) \quad = \quad  JT_{\bG,r,M^*}(-):\bP V_r(\bG) \to \cY_{m}.$$
Moreover,  if $I$ and $J$ restrict to 
injective $\bG_{(r)}$ modules of the same dimension, then  $JT_{\bG,r,I}(-) = JT_{\bG,r,J}(-)$.  Thus,
$M \mapsto JT_{\bG,r,M}(-)$ is far from faithful.
\end{remark}

\vskip .1in

The next proposition verifies that $JT_{\bG,r,M}$ is a ``function on $\bG$-conjugacy classes".
Fix the convention that the action $(-)^g: G \to G$  for a group $G$ with $g \in G$ sends $h \in G$
to $g^{-1}\cdot h \cdot g \in G$.  This easily leads to the adjoint action of $\bG$ on itself
(and on its normal subgroups $\bG_{(r)}$).  Composition of a 1-parameter group $\bG_{a(r)} \to \bG$
with this adjoint action determines an ``adjoint action" of $\bG$ on $V_r(\bG)$.

\begin{prop}
\label{prop:adjoint}
Consider an affine group scheme $\bG$ of finite type over $k$, $r$ a positive integer,
$M$ a $\bG$-module of dimension $m$, and 
$\chi_x: \Spec k(x) \to V_r(\bG)$ a scheme-theoretic point of $V_r(\bG)$
corresponding to the 1-parameter subgroup $\mu_x: \bG_{a(r),k(x)} \to \bG_{k(x)}$.
For $g \in \bG(k(x))$, denote by 
$\chi_{x^g}: \Spec k(x) \to V_r(\bG)$ the 
the scheme-theoretic point corresponding to the 1-parameter subgroup
 $(-)^g \circ \mu_x:  \bG_{a(r),k(x)} \to  \bG_{k(x)}$.
Then 
$$JT_{\bG,r,M}(x^g) \quad = \quad JT_{\bG,r,M}(x).$$
\end{prop}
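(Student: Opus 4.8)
The plan is to reduce the asserted equality of Young diagrams to an isomorphism of modules. By Definition~\ref{defn:JT}, $JT_{\bG,r,M}(x)$ is the Jordan type of the $k(x)[u]/u^p$-module obtained from $k(x)\otimes M$ by restricting first along $\mu_x$ and then along $\epsilon_r$; since isomorphic $k(x)\otimes k\bG_{a(r)}$-modules restrict along $\epsilon_r$ to isomorphic $k(x)[u]/u^p$-modules, and since Jordan type is an isomorphism invariant, it suffices to produce an isomorphism
\[
(\mu_{x^g})^*(k(x)\otimes M) \ \simeq \ (\mu_x)^*(k(x)\otimes M)
\]
of modules over $k(x)\otimes k\bG_{a(r)}$, where $\mu_{x^g} = (-)^g\circ\mu_x$ as in the statement.

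First I would record that inner conjugation $c_g \equiv (-)^g\colon \bG_{k(x)}\to\bG_{k(x)}$ is an automorphism of group schemes over $k(x)$ which carries the normal closed subgroup scheme $\bG_{(r),k(x)}$ onto itself; since $\mu_x$ factors through $\bG_{(r),k(x)}$, so does $\mu_{x^g} = c_g\circ\mu_x$, and thus $(\mu_{x^g})^*(k(x)\otimes M) = (\mu_x)^*\bigl(c_g^*(k(x)\otimes M)\bigr)$ as $k(x)\otimes k\bG_{a(r)}$-modules. The main step is then to exhibit an isomorphism $c_g^*(k(x)\otimes M)\simeq k(x)\otimes M$ of $\bG_{(r),k(x)}$-modules; this is precisely where the hypothesis that $M$ is a $\bG$-module (and not merely a $\bG_{(r)}$-module) is used. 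Writing $\rho_M\colon\bG_{k(x)}\to\GL(k(x)\otimes M)$ for the structure homomorphism and $\rho_M(g)\in\GL(k(x)\otimes M)$ for the invertible operator attached to $g\in\bG(k(x))$, the homomorphism property of $\rho_M$ yields $\rho_M(g^{-1}hg) = \rho_M(g)^{-1}\rho_M(h)\rho_M(g)$ for points $h$ of $\bG_{(r),k(x)}$, and a direct check shows that $\rho_M(g)$ then defines a $\bG_{(r),k(x)}$-equivariant map $c_g^*(k(x)\otimes M)\to k(x)\otimes M$; being an invertible linear map, it is an isomorphism of $\bG_{(r),k(x)}$-modules. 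Restricting this isomorphism along $\mu_x$ and then along $\epsilon_r$ gives isomorphic $k(x)[u]/u^p$-modules, whence $JT_{\bG,r,M}(x^g) = JT_{\bG,r,M}(x)$.

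I do not expect a genuine obstacle; the only point requiring care is the bookkeeping of conventions — matching the fixed convention $(-)^g\colon h\mapsto g^{-1}hg$ with the induced conjugation on $\GL(k(x)\otimes M)$ and with the left/right module structures, so that the intertwining identity and the equivariance check come out on the correct sides. An equivalent and equally short route phrases everything at the level of operators: using Theorem~\ref{thm:theta}(1) together with $(\mu_{x^g})_* = (c_g)_*\circ(\mu_x)_*$, one finds that $(\mu_{x^g})_*(1\otimes u_{r-1})$ acts on $k(x)\otimes M$ as $\rho_M(g)^{-1}\circ\bigl[(\mu_x)_*(1\otimes u_{r-1})\bigr]\circ\rho_M(g)$, a linear conjugate of the action of $(\mu_x)_*(1\otimes u_{r-1})$ on $k(x)\otimes M$; conjugate $p$-nilpotent operators have equal Jordan type.
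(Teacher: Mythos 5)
Your proof is correct and follows essentially the same route as the paper: the paper likewise introduces the conjugate module $M^g$ (your $c_g^*(k(x)\otimes M)$), observes that the action of $g$ gives a $\bG$-equivariant isomorphism $M^g \to M$, and deduces via a commutative diagram that $(\mu_{x^g})_*(1\otimes u_{r-1})$ acting on $M$ is linearly conjugate to $(\mu_x)_*(1\otimes u_{r-1})$ acting on $M$. Your operator-level reformulation at the end is exactly the content of that diagram, and your attention to the convention $(-)^g\colon h\mapsto g^{-1}hg$ matches the paper's.
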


\begin{proof}
Using base change from $k$ to $k(x)$, we reduce to the notationally simpler case that $k = k(x)$.
Let $M^g$ denote the $\bG$-module with
the same underlying vector space structure as that of $M$ and with $h \in \bG(k)$ sending
 $n \in M^g$ to $(g^{-1}hg)\cdot n \in M^g$ where $\cdot$ refers to the given action of $\bG(k)$ on $M$.
 Observe that the map $g: M^g \to M, \ n \mapsto g\cdot n$ is $\bG(k)$-equivariant.
 
 This leads to a commutative diagram (viewing group schemes as functors from commutative $k$-algebras
 to groups)
 \begin{equation}
\label{eqn:cong}
 \xymatrixcolsep{5pc}\xymatrix{
k[V_r(\bG)] \otimes M^g  \ar[r]^{1\otimes g\bu} \ar[d]^-{\Theta_{\bG,r,M^g}} & k[V_r(\bG)] \otimes M \ar[d]^-{\Theta_{\bG,r,M}}  \\
k[V_r(\bG)] \otimes M^g \ar[r]^{1\otimes g\bu} \ar[d]^{(\mu_{x^g})^*\otimes 1} & 
k[V_r(\bG)] \otimes M \ar[d]^{(\mu_x)^*\otimes 1}  \\
k[V_r(\bG_a)] \otimes M^g  \ar[r]^{1\otimes g\bu} &   k[V_r(\bG_a)] \otimes M .
}
\end{equation}
Consequently,  the action of $(\mu_{x^g})_*(1\otimes u_{r-1}) = (\Theta_{\bG,r,x^g})_*(1\otimes u_{r-1})$ on $M^g$
is isomorphic to the action of $(\mu_x)_*(1\otimes u_{r-1}) = (\Theta_{\bG,r,x})_*(1\otimes u_{r-1})$ on $M$,
where $\Theta_{\bG,r,x^g}$ and  $\Theta_{\bG,r,x}$ are the specializations of $\Theta_{\bG,r}$ at $x^g$ and $x$.
\end{proof}

\vskip .1in

 In the theorem below, we equip (the scheme-theoretic points of) $\bP V_r(\bG)$ with the Zariski topology 
and equip $\cY_{m}$ with the Alexandrov 
discrete topology (see Theorem \ref{thm:cont}).  We refer to \cite[Prop 2.8.3]{FP2} for an earlier 
discussion of properties of the Jordan type function.

\begin{thm}
\label{thm:JT}
Let $\bG$ be an affine group scheme of finite type over $k$ and let $r$ be a
positive integer.   
\begin{enumerate}
\item
For any finite dimensional $\bG_{(r)}$-module $M$, the function $JT_{\bG,r,M}(-)$ of (\ref{eqn:JT})
induces a continuous map
$$JT_{\bG,r,M}(-): \bP V_r(\bG) \quad \to \quad \cY$$ 
sending a scheme-theoretic point $\ol x \in \bP V_r(\bG)$ to $JT_{\bG,r,M}(x)$
for any choice of scheme-theoretic point $x \in V_r(\bG)$ projecting to $\ol x$.
\item
The finite dimensional $\bG_{(r)}$-module $M$ is injective if and only if the dimension $m$ of $M$ is divisible
by $p$ and \ $JT_{\bG,r,M}(x) \ = \ \frac{m}{p} \cdot [p]$ \ for all $x \in \bP V_r(\bG)$.
\item
For any $\ol x \in \bP V_r(\bG)$, $M \in mod(\bG_{(r)},m)$, and $N \in mod(\bG_{(r)},n)$, 
$$JT_{\bG,r,M\oplus N}(\ol x) \ = \ JT_{\bG,r,M}(\ol x)+JT_{\bG,r,N}(\ol x) \  \in \ \cY_{m+n}.$$
\item
If $r=1$ or if $\ol x \in \bP V_r(\bG)$ is a generic point, if $M \in mod(\bG_{(r)},m)$, and if 
$N \in mod(\bG_{(r)},n)$, then
$$JT_{\bG,r,M\otimes N}(\ol x) \ = \ JT_{\bG,r,M}(\ol x)\otimes JT_{\bG,r,N}(\ol x) \  \in \ \cY_{m\cdot n}.$$
\item
If $f: M \to N$ is a surjective map of 
finite dimensional $\bG_{(r)}$-modules, then $JT_{\bG,r,N}(\ol x) \ \leq \ JT_{\bG,r,M}(\ol x)$
for any $\ol x \in \bP V_r(\bG)$;  in other words, $JT_{\bG,r,N}(-) \leq JT_{\bG,r,M}(-)$, where the partial 
ordering of functions is that inherited by the partial ordering of $\cY$.
\item
Consider a short exact sequence $\cE: 0 \to N \ \to \ M \ \to \ Q \to 0$ of finite dimensional $\bG_{(r)}$-modules.
Then \ $JT_{\bG,r,M}(-) =  JT_{\bG,r,N}(-) + JT_{\bG,r,Q}(-)$ \
if and only if the short exact sequence $\mu_x^*(\cE)$ of $\bG_{k(x)}$-modules splits for all $x \in V_r(\bG)$.
\item
If  $\phi: \bH \to \bG$ is a morphism of affine group schemes of finite type over $k$, then for any
finite dimensional $\bG$-module $M$
$$JT_{\bH,r,\phi^*(M)}(-): \bP V_r(\bH) \ \to \ \cY$$ equals
$$JT_{\bG,r,M}(-) \circ \phi_*: \bP V_r(\bH) \ \to \ \bP V_r(\bG) \ \to \ \cY.$$
\end{enumerate}
\end{thm}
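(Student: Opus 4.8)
The plan is to prove the seven assertions in the order (1), (2), (5), (6), (3), (4), (7), since several of the later parts rest on the naturality and continuity established first. For assertion (1), I would apply Theorem \ref{thm:cont} with $A = k[V_r(\bG)]$, $M_A = k[V_r(\bG)]\otimes M$ (which is free, hence projective, of rank $m = \dim M$), and $\theta_A = \Theta_{\bG,r,M}$ the $p$-nilpotent operator of Theorem \ref{thm:theta}(3); this gives a continuous map $JT_{A,M_A}(-)\colon \Spec k[V_r(\bG)] \to \cY_{\leq,m}$. By Theorem \ref{thm:theta}(1) and (4), the value of this function at a scheme-theoretic point $x$ is exactly $JT_{\bG,r,M}(x)$, so it agrees with the function of \eqref{eqn:JT}. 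To descend to $\bP V_r(\bG)$, I would invoke Definition \ref{defn:grading}: the $\bA^1$-action rescales $\mu_*(u_{r-1})$ by a nonzero scalar $c^{p^{r-1}}$ when $c \neq 0$, and rescaling a $p$-nilpotent operator by a nonzero scalar does not change its Jordan type. Hence $JT_{\bG,r,M}(-)$ is constant on the fibers of the projection \eqref{eqn:proj} $V_r(\bG)-\{0\} \to \bP V_r(\bG)$, and since that projection is an open map of Zariski spaces, the induced function on $\bP V_r(\bG)$ is continuous.

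For assertion (2), I would use the homeomorphism of Theorem \ref{thm:fundamental} between $\bP V_r(\bG)_M$ and $\bP H^{ev}(\bG_{(r)},k)_M$: the module $M$ is injective as a $\bG_{(r)}$-module if and only if its cohomological support variety is empty, which by that theorem holds if and only if $JT_{\bG,r,M}(x) = \tfrac{m}{p}\cdot[p]$ for all $x$; the divisibility of $m$ by $p$ is then automatic (and one should cite the standard fact, e.g. from \cite{FP2}/\cite{FPS}, that a module with generically free restriction along all $1$-parameter subgroups is injective — the essential input being that injectivity can be detected by $\pi$-points). Assertion (3) is pointwise: at a chosen $x$ over $\ol x$, $k(x)\otimes(M\oplus N) \cong (k(x)\otimes M)\oplus(k(x)\otimes N)$ compatibly with the $u_{r-1}$-action, and Jordan type is additive on direct sums (Proposition \ref{prop:Young}(1)); this is already recorded in Proposition \ref{prop:JM(-)}. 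Assertion (5) follows from Corollary \ref{cor:surjective}: a surjection $M \twoheadrightarrow N$ of $\bG_{(r)}$-modules restricts, after applying $k(x)\otimes -$ and $(\mu_x)_*\circ\epsilon_r$, to a surjection of $k(x)[u]/u^p$-modules, so $JT_{\bG,r,N}(\ol x) \leq JT_{\bG,r,M}(\ol x)$. Assertion (6) then combines (3) and (5): the inequality $JT_{\bG,r,M}(-) \leq JT_{\bG,r,N}(-) + JT_{\bG,r,Q}(-)$ always holds and is an equality at $x$ iff $\mu_x^*(\cE)$ splits, since equality of Jordan types for a short exact sequence of $k(x)[u]/u^p$-modules forces (by counting, via Proposition \ref{prop:rank}) the sequence to be split as $k(x)[u]/u^p$-modules, and splitting over the group algebra $k(x)\bG_{a(r)}$ restricted to $\bG_{a(r)}$ is what is meant here.

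Assertion (7) is the naturality statement, which I expect to be the main obstacle and requires the most care. The map $\phi\colon \bH \to \bG$ induces $\phi_*\colon V_r(\bH) \to V_r(\bG)$ by composition of $1$-parameter subgroups, hence $\bP\phi_*\colon \bP V_r(\bH) \to \bP V_r(\bG)$, and dually a map $\phi^*\colon k[V_r(\bG)] \to k[V_r(\bH)]$ of graded $k$-algebras. The point is that for $y \in V_r(\bH)$ with image $x = \phi_*(y)$, the $1$-parameter subgroup $\mu_y$ of $\bH$ composed with $\phi$ equals $\mu_x$ as a $1$-parameter subgroup of $\bG$, so the restriction of $\phi^*(M)$ along $(\mu_y)_*\circ\epsilon_r$ is the restriction of $M$ along $(\mu_x)_*\circ\epsilon_r$; hence $JT_{\bH,r,\phi^*(M)}(y) = JT_{\bG,r,M}(x)$ on underlying $k(y)[u]/u^p$-modules. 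When $\phi$ is a closed immersion this is exactly the compatibility square \eqref{eqn:restrict-square} and Theorem \ref{thm:theta}(2); for a general morphism $\phi$ I would factor it through the closed immersion of the (scheme-theoretic) image, or else simply argue directly that $\phi_*$ is defined by precomposition and that Jordan type only depends on the composite $1$-parameter subgroup, using the representability statement of Definition \ref{defn:VrG}. One subtlety is that $\phi_*$ need not be well-defined on $\bP V_r(\bH) \to \bP V_r(\bG)$ for modules not of exponential type if $\phi$ fails to be compatible with the gradings — but Definition \ref{defn:grading} shows the $\bA^1$-actions are natural in $\bG$ (they come from rescaling $t$ on $\bG_{a(r)}$), so $\phi_*$ does descend, and the composite $JT_{\bG,r,M}(-)\circ\bP\phi_*$ is exactly $JT_{\bH,r,\phi^*(M)}(-)$.
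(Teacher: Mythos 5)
Your plan for assertions (1), (2), (3), (5), (6), and (7) is sound and, apart from small variations, matches the paper's own argument: the paper also reduces continuity to Theorem \ref{thm:cont}, though it does so chart by chart on the affine opens $\Spec (k[V_r(\bG)][1/h])_0$ of $\bP V_r(\bG)$ rather than descending along the projection $V_r(\bG) - \{0\} \to \bP V_r(\bG)$ --- your route is fine, but you should justify that this projection is a topological quotient map for a weighted $\Proj$, which is exactly the point that working on the standard charts avoids. Assertion (2) comes from the detection theorem of \cite{SFB2} packaged in Theorem \ref{thm:fundamental}; (5) is Corollary \ref{cor:surjective}; and (7) is the tautological compatibility of restriction with composition of $1$-parameter subgroups, as you say.

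The genuine gap is assertion (4). You list it in your intended order of proof but never actually address it, and it is the one assertion that does not follow from formal manipulations. The subtlety is that $\epsilon_r\colon k[u]/u^p \to k\bG_{a(r)}$ is a map of algebras but not of Hopf algebras for $r>1$, so restriction along $(\mu_x)_*\circ\epsilon_r$ does not commute with tensor products in general --- this is precisely why the statement carries the hypothesis ``$r=1$ or $\ol x$ generic'' rather than holding at every point. The proof splits into two cases: for $r=1$, $\epsilon_1$ is the identity and $(\mu_x)_*$ is a map of Hopf algebras over $k(x)$, so the restriction functor is monoidal and Jordan types multiply by the definition of the product on $\cY$ (Definition \ref{defn:poset}); for a generic point and arbitrary $r$, one must invoke the nontrivial result \cite[Prop 4.7]{FPS} that generic Jordan type is multiplicative under tensor product. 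Without one of these inputs the assertion cannot be deduced from anything else in your outline.
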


\begin{proof}
The fact that $JT_{\bG,r,M}(\ol x)$ is well defined (namely, independent of the choice of $x \in V_r(\bG)$
projecting to $\ol x \in \bP V_r(\bG)$),
follows from the equality (\ref{eqn:restrict-tildeTheta}) of Theorem \ref{thm:global-univ}.  
To prove the asserted continuity of assertion (1) it suffices to prove continuity of
$JT_{\bG,r,M}(-)$ when restricted to each affine open subset of the form
$\Spec (k[V_r(\bG)][1/h])_0  \subset \bP V_r(\bG)$,
  where $h \in k[V_r(\bG)]_{p^{r-1}}$ is an irreducible  homogeneous generator of $k[V_r(\bG)]$
raised to the power $p^{r-1}/deg(h)$.
This follows from the continuity proved in Theorem \ref{thm:cont} for $A = (k[V_r(\bG)][1/h])_0$.

Assertion (2)
follows from the detection theorem for nilpotence for $\bG_{(r)}$-modules given in \cite[Thm 4.3, Cor 4.4]{SFB2}.

The fact that $JT_{\bG,r,M\oplus N}(-) = JT_{\bG,r,M}(-) + JT_{\bG,r,N}(-)$ is self evident.
Since $(\mu_x)_*: k(x) \otimes k\bG_{a(r)} \to k(x) \otimes k\bG_{(r)}$ is a map of 
Hopf algebras (over $k(x)$), the restriction map $\mu_x^*: mod(k(x) \otimes k\bG_{(r)}) 
\to mod(k(x) \otimes k\bG_{a(r)})$ commutes with tensor products.  
Since $\epsilon_1$ is the identity, this implies assertion (4) whenever $r=1$.
On the other hand, if $x$ is a generic point of $V_r(\bG)$ for any $r > 0$, 
the fact that $M \ \mapsto \ JT_{\bG,r,M}(x)$ commutes with tensor products  is given 
by  \cite[Prop 4.7]{FPS}.

A surjective map $f: M \to N$ of finite dimensional $\bG_{(r)}$-modules induces a 
surjective map $k(x)\otimes M \ \to \ k(x) \otimes N$ of $k(x)[u]/t^p$-modules.
Consequently, assertion (5) follows from Corollary \ref{cor:surjective}. 
Assertion (6) follows from the definition of a locally split exact sequence (see \cite{CF})
and the fact for any field $K$ and any short exact sequence $0 \to N_K \to M_K \to Q_K \to 0$
that the Jordan type of $M_K$ equals the sum of the Jordan types of $N_K$ and $Q_K$ if and
only if the short exact sequence splits.

Assertion (7) is equivalent to the assertion that 
$JT_{\bG,r,M}(\phi_*(y)) = JT_{\bH,r,\phi^*(M)}(y)$ for every $y \in \bP V_r(\bH)$.  This
 is immediate from the identification (with $x = \phi_*(y)$) \\
$JT((\mu_x)_* \circ \epsilon_r)^*(k(x) \otimes M) \quad = 
\quad JT((\mu_y)_* \circ \epsilon_r)^*(k(y) \otimes \phi^*(M)).$
\end{proof}

\vskip .1in

The following corollary is rephrasing of some of the properties of Theorem \ref{thm:JT}.

\begin{cor}
\label{cor:Hom-cont}
We denote by $Hom_{cont}(\bP V_r(\bG),\cY)$ the set of continuous
maps from $\bP V_r(\bG)$ with the Zariski topology to $\cY$  with the Alexandrov 
discrete topology.  Give (the scheme-theoretic points of ) $\bP V_r(\bG)$ the partial 
ordering of specialization and give $Hom_{cont}(\bP V_r(\bG),\cY)$ the partial 
ordering inherited from that of $\cY$.

Then sending a finite dimension $\bG_{(r)}$-module $M$ to $JT_{\bG,r,M}(-)$ determines 
$$JT_{\bG,r}: mod(\bG_{(r)}) \quad \to \quad Hom_{cont}(\bP V_r(\bG),\cY)$$
commuting with sums and the partial orderings.
\end{cor}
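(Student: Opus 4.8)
The corollary is a straightforward repackaging of Theorem \ref{thm:JT}, so the plan is essentially bookkeeping: verify that the assignment $M \mapsto JT_{\bG,r,M}(-)$ lands in the claimed target set, that it respects direct sums, and that it is compatible with the two partial orderings. First I would observe that, for each finite dimensional $\bG_{(r)}$-module $M$ of dimension $m$, Theorem \ref{thm:JT}(1) already tells us that $JT_{\bG,r,M}(-)$ is a continuous map $\bP V_r(\bG) \to \cY$ (with $\bP V_r(\bG)$ given the Zariski topology and $\cY$ the Alexandrov discrete topology of Definition \ref{defn:top}), and moreover its image lies in the finite subposet $\cY_m$. Hence $JT_{\bG,r,M}(-)$ is a well-defined element of $Hom_{cont}(\bP V_r(\bG),\cY)$, and the map $JT_{\bG,r}$ of the statement makes sense.

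Next I would check compatibility with sums. By Theorem \ref{thm:JT}(3), for every scheme-theoretic point $\ol x \in \bP V_r(\bG)$ we have $JT_{\bG,r,M\oplus N}(\ol x) = JT_{\bG,r,M}(\ol x) + JT_{\bG,r,N}(\ol x)$, where $+$ is the addition in the monoid $\cY = \bN^{\times p}$ of Definition \ref{defn:poset}. Since addition of continuous functions valued in $\cY$ is computed pointwise, this says exactly that $JT_{\bG,r,M\oplus N}(-) = JT_{\bG,r,M}(-) + JT_{\bG,r,N}(-)$ in $Hom_{cont}(\bP V_r(\bG),\cY)$; one should note in passing that the pointwise sum of two continuous $\cY$-valued functions is again continuous, which follows from the fact that addition $\cY \times \cY \to \cY$ is continuous for the Alexandrov topology (a closed downward-closed subset pulls back to a closed set because $\ul a + \ul c \le \ul b$ implies, for each fixed $\ul c$, a downward-closed condition on $\ul a$). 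Thus $JT_{\bG,r}$ is a morphism of (commutative) monoids.

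Finally I would address the partial orderings. On $mod(\bG_{(r)})$ the relevant order is that induced by surjections: if there is a surjective map $M \twoheadrightarrow N$ then we want $JT_{\bG,r,N}(-) \le JT_{\bG,r,M}(-)$. This is precisely Theorem \ref{thm:JT}(5), which gives $JT_{\bG,r,N}(\ol x) \le JT_{\bG,r,M}(\ol x)$ in $\cY$ for every $\ol x$, i.e.\ $JT_{\bG,r,N}(-) \le JT_{\bG,r,M}(-)$ in the order on $Hom_{cont}(\bP V_r(\bG),\cY)$ inherited pointwise from $\cY$. So the assignment is order-preserving, and assembling the three observations completes the proof.

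Honestly there is no real obstacle here: every ingredient is already established in Theorem \ref{thm:JT}. The only point requiring a word of care — and the one I would make explicit rather than leave to the reader — is that the pointwise sum of two continuous $\cY$-valued maps is again continuous, so that the monoid operation on $Hom_{cont}(\bP V_r(\bG),\cY)$ used in the statement is well-defined; this is immediate from the continuity of $+ : \cY \times \cY \to \cY$ noted above, together with the general fact that for topological spaces $Z$ and a continuous binary operation $\cY \times \cY \to \cY$, the induced operation on $Hom_{cont}(Z,\cY)$ (with the appropriate topology, here the product/Alexandrov structure) is again well-defined on continuous maps.
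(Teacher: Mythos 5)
Your proposal is correct and matches the paper's intent exactly: the paper offers no separate argument, stating only that the corollary is a rephrasing of Theorem \ref{thm:JT}, and your assembly of parts (1), (3), and (5) of that theorem is precisely the intended justification. The extra observation that the pointwise sum of two continuous $\cY$-valued maps is again continuous (using monotonicity of addition on $\cY$ and finiteness of $\cY_{\leq,m}$) is a worthwhile detail the paper leaves implicit.
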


\vskip .1in

The following theorem associates to a finite dimensional $\bG$-module $M$
the configuration $ \{ (\bP V_r(\bG))_{M, \leq \ul a} \}$ which gives considerably more 
 information about $M$ than does the support variety $\bP V_r(\bG)_M \subset \bP V_r(\bG)$
of $M$.  The fact that 
$(V_r(\bG))_{M, \leq \ul a} \subset V_r(\bG)$ is closed is given with a sketch of proof in \cite[Prop 2.8.3]{FP2}.

\begin{thm} (see Proposition 2.8.3 of \cite{FP2})
\label{thm:gen-strata}
Let $\bG$ be an affine group scheme of finite type over $k$ and let $r$ be a
positive integer.   Consider a $\bG_{(r)}$-module $M$ of dimension $m$.
For each $\ul a \in \cY_{m}$, we define the subset
$$(\bP V_r(\bG))_{M, \leq \ul a} \ \equiv \ \{\ol x:  JT_{\bG,r,M}(\ol x) \leq \ul a\} \ \subset \
\bP V_r(\bG).$$
Then $\{ (\bP V_r(\bG))_{M,\leq \ul a }; \ \ul a \in \cY_{m} \}$ is a 
partially ordered configuration of closed subspaces of $\bP V_r(\bG)$.

With this notation, the support variety $\bP V_r(\bG)_M$ of \cite{SFB2} equals the union of those 
subspaces $(\bP V_r(\bG))_{M, \ul a \leq}$ indexed by $\ul a \in \cY_{m}$ which 
satisfy the condition that $a_i \not= 0$ for some $ i< p$.  (See Theorem \ref{thm:fundamental}.)

The non-maximal support variety $\Gamma(\bG_{(r)})_M$ of \cite{FP2} equals the union
of those subspaces $(\bP V_r(\bG))_{M, \ul a \leq}$ indexed by those $\ul a \in \cY_{m}$ which 
satisfy the condition that there exists some $\ul b \in \cY_{m}$ with  $\ul a \leq \ul b$ 
such that $(\bP V_r(\bG))_{M, \leq \ul a }$ is strictly contained in $(\bP V_r(\bG))_{M, \leq \ul b}$
\end{thm}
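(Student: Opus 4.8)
The plan is to break Theorem \ref{thm:gen-strata} into three essentially independent assertions and handle each in turn. First I would establish that $\{(\bP V_r(\bG))_{M,\leq\ul a}\}$ is a partially ordered configuration of closed subspaces. Closedness is the substantive point: I would reduce to an affine chart $U_h = \Spec(k[V_r(\bG)][1/h])_0 \subset \bP V_r(\bG)$ where $h \in k[V_r(\bG)]_{p^{r-1}}$ is a suitable homogeneous generator (exactly as in the proof of Theorem \ref{thm:JT}(1)), observe that on $U_h$ the restricted module $(k[V_r(\bG)][1/h])_0 \otimes M$ is finitely generated projective and is equipped with the $p$-nilpotent operator $\tilde\Theta_{\bG,r,M}$ coming from Theorem \ref{thm:global-univ}(2), and then invoke Theorem \ref{thm:cont}: since $\cY_{m}$ is finite and carries the Alexandrov topology, $\{x : JT(x) \leq \ul a\}$ is the preimage of the closed set $\ol{\{\ul a\}}$ under a continuous map, hence closed. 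The ``partially ordered configuration'' language just records that $\ul a \leq \ul b$ in $\cY_m$ forces $(\bP V_r(\bG))_{M,\leq\ul a} \subseteq (\bP V_r(\bG))_{M,\leq\ul b}$, which is immediate from transitivity of $\leq$ on $\cY$.

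Next I would identify the support variety $\bP V_r(\bG)_M$ with the stated union. By Theorem \ref{thm:fundamental}, $\bP V_r(\bG)_M = \{\ol x : JT_{\bG,r,M}(\ol x) \neq \tfrac{m}{p}[p]\}$ — here I use that $JT_{\bG,r,M}(\ol x) = \tfrac{m}{p}[p]$ is precisely the condition that $M$ is ``free at $\ol x$'', matching the definition of $V_r(\bG)_M$ in that theorem. Now a Young diagram $\ul a \in \cY_m$ satisfies $a_i \neq 0$ for some $i < p$ exactly when $\ul a \neq \tfrac{m}{p}[p]$ (the only partition of $m$ supported entirely in columns of length $p$ is $\tfrac{m}{p}[p]$), and for such $\ul a$ the diagram $\tfrac{m}{p}[p]$ is \emph{not} $\leq \ul a$ by the ordering of Proposition \ref{prop:Young}(3) unless $\ul a = \tfrac{m}{p}[p]$. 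So $\ol x$ lies in some $(\bP V_r(\bG))_{M,\leq\ul a}$ with $a_i\neq 0$ for some $i<p$ if and only if $JT_{\bG,r,M}(\ol x) \leq \ul a$ for such an $\ul a$, which (taking $\ul a = JT_{\bG,r,M}(\ol x)$ when it has a short column, and noting the converse) is equivalent to $JT_{\bG,r,M}(\ol x) \neq \tfrac{m}{p}[p]$. I would spell this bookkeeping out carefully; it is elementary but needs the two observations about $\cY_m$ just noted.

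For the non-maximal support variety $\Gamma(\bG_{(r)})_M$, the plan is to unwind the definition from \cite{FP2}: $\ol x \in \Gamma(\bG_{(r)})_M$ iff $JT_{\bG,r,M}$ fails to be locally maximal at $\ol x$, i.e.\ every neighborhood of $\ol x$ contains a point $\ol y$ with $JT_{\bG,r,M}(\ol y) > JT_{\bG,r,M}(\ol x)$. I would translate this into the stratum language: setting $\ul a = JT_{\bG,r,M}(\ol x)$, the failure of local maximality says $\ol x$ lies in the closure of $\{JT_{\bG,r,M} = \ul b\}$ for some $\ul b > \ul a$, equivalently (using the closedness from step one) $(\bP V_r(\bG))_{M,\leq\ul a} \subsetneq (\bP V_r(\bG))_{M,\leq\ul b}$ locally near $\ol x$ — and then globalize using that the relevant sets are closed and $\cY_m$ is finite. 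Conversely, if $\ol x$ lies in some $(\bP V_r(\bG))_{M,\leq\ul a}$ with a strictly larger stratum $(\bP V_r(\bG))_{M,\leq\ul b}$, one produces specializing points witnessing non-maximality.

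I expect the main obstacle to be the third part: matching my reformulation of ``local non-maximality of $JT_{\bG,r,M}$'' with the precise combinatorial condition in the theorem statement — namely that there exists $\ul b \in \cY_m$ with $\ul a \leq \ul b$ and $(\bP V_r(\bG))_{M,\leq\ul a} \subsetneq (\bP V_r(\bG))_{M,\leq\ul b}$ — requires care about whether the strict containment is meant globally or in a neighborhood, and about the distinction between a point $\ol x$ with $JT(\ol x) = \ul a$ versus a point merely \emph{in} the closed set $(\bP V_r(\bG))_{M,\leq\ul a}$. Resolving this cleanly will lean on the continuity/specialization statement (Theorem \ref{thm:JT}(1) together with the fact that $\ol x \leq \ol y$ implies $JT_{\bG,r,M}(\ol x) \leq JT_{\bG,r,M}(\ol y)$) and on irreducibility arguments on individual components of each stratum, plus a direct appeal to the proof sketch already indicated in \cite[Prop 2.8.3]{FP2}.
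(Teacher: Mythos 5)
Your handling of the first two assertions is sound and is essentially the paper's own argument: the paper proves closedness of $(\bP V_r(\bG))_{M,\leq \ul a}$ by combining the continuity of $JT_{\bG,r,M}(-)$ (Theorem \ref{thm:JT}(1)) with the fact that the down-set $\{\ul c : \ul c \leq \ul a\}$ is closed in the Alexandrov topology on $\cY$; your reduction to the charts $U_h$ simply re-runs the proof of Theorem \ref{thm:JT}(1) inline. Your bookkeeping for $\bP V_r(\bG)_M$ is correct and is exactly the content the paper leaves as ``immediate from the definitions'' (the two facts you isolate — that $\tfrac{m}{p}[p]$ is the unique maximal element of $\cY_m$, and that $\ul a$ has a part of size $<p$ iff $\ul a \neq \tfrac{m}{p}[p]$ — are what make it work).

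The gap is in your third part. The non-maximal support variety $\Gamma(\bG_{(r)})_M$ of \cite{FP2} is defined by a \emph{global} condition: $\ol x$ lies in it iff $JT_{\bG,r,M}(\ol x)$ fails to be a maximal element of the set of all Jordan types attained by $M$ on $\bP V_r(\bG)$. Your reformulation — every neighborhood of $\ol x$ contains a point with strictly larger Jordan type — is a genuinely different (local) condition, and the two are inequivalent: if $\bP V_r(\bG)$ has two disjoint components on which $M$ has constant Jordan types $\ul c_1 < \ul c_2$, the whole first component lies in $\Gamma(\bG_{(r)})_M$ and in the union of the theorem (take $\ul a = \ul c_1 \leq \ul b = \ul c_2$, for which $(\bP V_r(\bG))_{M,\leq\ul c_1} \subsetneq (\bP V_r(\bG))_{M,\leq\ul c_2}$), yet no point of that component sees a larger type in any neighborhood, so your local criterion — and the closure/specialization argument built on it — would return the empty set. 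The intended argument is purely set-theoretic: if $\ul a = JT_{\bG,r,M}(\ol x)$ is dominated by some attained $\ul b > \ul a$, then any point realizing $\ul b$ lies in $(\bP V_r(\bG))_{M,\leq\ul b}$ but not in $(\bP V_r(\bG))_{M,\leq\ul a}$, giving the required strict containment with $\ol x$ in the smaller set. Note also that the converse inclusion is not formal either: a strict containment $(\bP V_r(\bG))_{M,\leq\ul a} \subsetneq (\bP V_r(\bG))_{M,\leq\ul b}$ only produces a point whose type is $\leq \ul b$ but $\not\leq \ul a$, which need not strictly dominate $JT_{\bG,r,M}(\ol x)$ when incomparable maximal types occur on different components; this is where the appeal to \cite[Prop 2.8.3]{FP2} (rather than a topological argument) is actually needed.
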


\begin{proof}
The continuity of $JT_{\bG,r,M}(-)$ together with the evident fact for any $\ul a \in \cY$ that 
$\{ \ul c \in \cY: \ \ul c  \leq a \}$ is closed in $\cY$ implies 
$(\bP V_r(\bG))_{M, \leq \ul a}$ is closed in $\bP V_r(\bG)$.
The identification of the subspaces $\bP V_r(\bG)_M =
 \cup_{\ul a \not= n[p]} (\bP V_r(\bG)_{M, \leq \ul a})$
 and $\Gamma(\bG_{(r)})_M$ (as asserted above) inside $\bP V_r(\bG)$
 follows immediately from the definitions of these subspaces.
\end{proof}

\vskip .1in

We make explicit the following corollary first stated and proved in \cite[Cor 2.15]{FP2}.

\begin{cor}
\label{cor:locally closed}
Let $M$ be a $\bG_{(r)}$-module of dimension $m$, and let $\ul a$ be an object on $\cY_{\leq,m}$, The subset 
$$(\bP V_r(\bG))_{M, =  \ul a} \ \equiv \ \{ x:  JT_{\bG,r,M}(x) = \ul a\} \ \subset \ \bP V_r(\bG)$$
is locally closed, the union of those $(\bP V_r(\bG))_{M, \leq \ul b}$ such that 
$\ul b \leq \ul a$ and $\ul b \not= \ul a$,
$$(\bP V_r(\bG))_{M, =  \ul a} \quad = \quad (\bP V_r(\bG))_{M, \leq \ul a} - 
(\cup_{\ul a \not= \ul b \leq \ul a}(\bP V_r(\bG))_{M, \leq \ul b}).$$
\end{cor}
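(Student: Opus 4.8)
The plan is to reduce the corollary to the elementary set-theoretic identity displayed in the statement, and then to read off local closedness from the closedness results already established in Theorem \ref{thm:gen-strata}.

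First I would establish the set equality
$$(\bP V_r(\bG))_{M, = \ul a} \quad = \quad (\bP V_r(\bG))_{M, \leq \ul a} - \Big(\bigcup_{\ul a \neq \ul b \leq \ul a}(\bP V_r(\bG))_{M, \leq \ul b}\Big)$$
by unwinding the definitions and using that, by Theorem \ref{thm:JT}(1), $x \mapsto JT_{\bG,r,M}(x)$ is a well-defined function $\bP V_r(\bG) \to \cY_{m}$. For the inclusion $\subseteq$: if $JT_{\bG,r,M}(x) = \ul a$ then certainly $JT_{\bG,r,M}(x) \leq \ul a$, and if we had $\ul a = JT_{\bG,r,M}(x) \leq \ul b$ for some $\ul b \neq \ul a$ with $\ul b \leq \ul a$, then $\ul a \leq \ul b \leq \ul a$ would force $\ul b = \ul a$, a contradiction; hence $x$ lies in the right-hand side. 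For $\supseteq$: suppose $JT_{\bG,r,M}(x) \leq \ul a$ but $x$ lies in no $(\bP V_r(\bG))_{M, \leq \ul b}$ with $\ul b \leq \ul a$, $\ul b \neq \ul a$. Put $\ul c = JT_{\bG,r,M}(x)$; then $x \in (\bP V_r(\bG))_{M, \leq \ul c}$ by definition, so $\ul c$ cannot satisfy $\ul c \leq \ul a$, $\ul c \neq \ul a$; combined with $\ul c \leq \ul a$ this gives $\ul c = \ul a$, i.e. $x \in (\bP V_r(\bG))_{M, = \ul a}$. This settles the displayed identity.

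Next I would deduce that $(\bP V_r(\bG))_{M, = \ul a}$ is locally closed. By Theorem \ref{thm:gen-strata}, each $(\bP V_r(\bG))_{M, \leq \ul b}$ is a closed subset of $\bP V_r(\bG)$; in particular $(\bP V_r(\bG))_{M, \leq \ul a}$ is closed. The index set $\{\ul b : \ul b \leq \ul a, \ \ul b \neq \ul a\}$ is contained in $\cY_{\leq,m}$, which is finite, so $\bigcup_{\ul a \neq \ul b \leq \ul a}(\bP V_r(\bG))_{M, \leq \ul b}$ is a finite union of closed sets, hence closed. Therefore $(\bP V_r(\bG))_{M, = \ul a}$ is the difference of two closed subsets of $\bP V_r(\bG)$, i.e. the intersection of the closed set $(\bP V_r(\bG))_{M, \leq \ul a}$ with the open complement of that union, which is by definition locally closed.

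There is no real obstacle here: the argument is purely formal once the displayed identity is in hand. The one point to keep in mind — and the only place the topology of $\cY$ intervenes — is the appeal to finiteness of $\cY_{\leq,m}$: without it the union over $\ul b$ strictly below $\ul a$ need not be closed, exactly the phenomenon that forced the Alexandrov-topology discussion preceding Theorem \ref{thm:cont}. Everything else follows from the definitions and from Theorem \ref{thm:gen-strata}.
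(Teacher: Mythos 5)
Your proof is correct, and it is exactly the intended argument: the paper itself defers to \cite[Cor 2.15]{FP2}, but the decomposition displayed in the statement of the corollary is precisely what you verify, with the set identity following from antisymmetry of the partial order on $\cY_{\leq,m}$ and local closedness following from Theorem \ref{thm:gen-strata} together with the finiteness of $\cY_{\leq,m}$ (so that the union over $\ul b \leq \ul a$, $\ul b \neq \ul a$ is a finite union of closed sets). Your closing remark correctly identifies the only point where finiteness genuinely matters.
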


\vskip .1in

\begin{remark}
\label{rem:colored}
We can interpret Corollary \ref{cor:locally closed} as asserted that the Jordan type 
function $JT_{\bG,r,M}(-)$ for a finite dimensional $\bG_{(r)}$-module $M$ determines
a stratification of $\bP V_r(\bG)$ with locally closed strata, each colored by a 
Young diagram such that if one stratum is strictly contained in the closure of another stratum
then the Young diagram of the smaller stratum is less than the Young diagram of 
the bigger stratum.
\end{remark}

\vskip .1in

\begin{remark}
\label{rem:generalized}
In \cite[Defn 4.1]{FP2}, generalized support varieties $\Gamma^j(\bG)_M$ were introduced
for finite dimensional $\bG$-modules $M$ for an arbitrary finite group scheme $\bG$ over $k$.  
Namely, \ $\Gamma^j(\bG)_M$ is the subspace of those $\pi$-points $ [\alpha_K] \in \Pi(\bG)$
with the property that the rank of $\alpha_K(t^j)$ on $M_K$ is not maximal.
For $\bG$
infinitesimal, one can recover $\Gamma^j(\bG)_M$ as in Theorem \ref{thm:gen-strata} by 
replacing $\Theta_{\bG,r,M}$  in Theorem \ref{thm:theta} by its $j$-th power,
then replacing $JT((\mu_x)_* \circ \epsilon_r)^*(k[V_r(\bG)] \otimes M)$ in Theorem \ref{thm:JT}
by the Jordan type of the $j$-th power of $(\mu_x)_*(u_{r-1})$ acting on $k(x)\otimes M$.
\end{remark}

\vskip .2in


\section{First examples}
\label{sec:1examples}

In this section, we consider a few general examples of computations of $JT_{\bG,r,M}(-)$ 
(some found in \cite[\S 2]{FP2}).   More specific computations for the simpler $JT^{exp}_{\bG,r,M}(-)$
are given in Section \ref{sec:apps}.

\vskip .1in

\begin{ex}
\label{ex:constant-type}
Let $\bG = \bG_{(r)}$ be an  infinitesimal group scheme of height $\leq r$.  A $\bG$-module $M$ of 
dimension $m$ is of constant Jordan type 
$\ul a$ if and only if  $JT_{\bG,M,r}(-)$ is the constant function
sending each $\ol x \in \bP V_r(\bG)$ to $\ul a \in \cY_{m}$.  (See \cite{CFP}
for numerous examples.)

In particular, if $I$ is an injective $\bG$-module of dimension $m$,  then $p$ divides $m$
and $JT_{\bG,I,r}(-)$ sends each $\ol x \in \bP V_r(\bG)$ to $\frac{m}{p}[p] \in \cY_{m}$
(i.e., $JT_{\bG,I-r}(\ol x) = \frac{m}{p}[p] + 0[p-1]+ \cdots + 0[1]$).
\end{ex}

\vskip .1in

\begin{ex}
\label{ex:Carlson-mod}
Let $\bG = \bG_{(r)}$ be an infinitesimal group scheme of height $\leq r$ and $\ol x \in \bP V_r(\bG)$.
Consider $0 \not= \zeta \in H^{2d}(\bG,k)$ and take $M$ to be the Carlson 
$L_\zeta$-module as first constructed in \cite{C}; namely, $M$ is the kernel of a map
$\tilde \zeta:\Omega^{2d}(k) \to k$ representing $\zeta$.   

Then the dimension $n$ of $M$ is divisible by $p$.  Moreover, 
$$J_{\bG,M,r}(\ol x) = \frac{n}{p}[p] \quad {\text if}  \quad ((\mu_x)_* \circ \epsilon_r)^*(\zeta)) \not= 0
 \in H^{2d}(k(x)[u]/u^p,k),$$
$$J_{\bG,M,r}(\ol x) = (\frac{n}{p}-1)[p]+[p-1]+[1] \quad {\text if} \quad  ((\mu_x)_* \circ \epsilon_r)^*(\zeta)) = 0
 \in H^{2d}(k(x)[u]/u^p,k).$$
\end{ex}

\vskip .1in

\begin{ex}  (See \cite[Prop 2.9]{FP2})
\label{ex:perp}
Let $\bG = \bG_{(r)}$ be an infinitesimal group scheme of height $\leq r$.
Let $M$ be a finite dimensional $\bG$-module and $P \twoheadrightarrow M$ a finite dimensional
projective cover of $M$ with kernel $\Omega(M)$.   If 
$JT_{\bG,M,r}(\ol x) \ = \ul a$ for some $\ol x \in \bP V_r(\bG)$, then
$JT_{\bG,\Omega(M),r}(\ol x) \ = \ul a^\perp + n[p]$ for some integer $n$,
where $\ul a^\perp = \sum_{i=1}^{p-1}a_{p-i}[i]$.
\end{ex}

\vskip .1in

\begin{ex}
\label{ex:product-special}
(See \cite[\S I.3.8]{J}.)
Consider $\bG_1, \ \bG_2$, infinitesimal group schemes of height $\leq r$.
  Let $\bG = \bG_1 \times \bG_2$ and $M_1$ be a
a finite dimensional $\bG_1$-module.  Set $M$ equal to $ind_{\bG_1}^{\bG}M_1 \simeq M_1 \otimes k[\bG_2]$.
Let $\ol {(x,y)}$ be a scheme-theoretic point in $\bP V_r(\bG)$ represented by the scheme-theoretic
point $(x,y) \in V_r(\bG_1) \times V_r(\bG_2)$.
Then 
$$JT_{\bG,M,r}(\ol  {(x,0}))  \ = \ dim(k[\bG_2])\cdot JT_{\bG_1,M_1}((\ol x)), $$
$$JT_{\bG,M,r}(\ol  {(x,y})) \ = \ dim(M)/p\cdot [p], \quad y \not=0.$$
\end{ex}

\vskip .1in

We generalize the previous example as follows.

\begin{ex}
\label{ex:product}
Consider $\bG_1, \ \bG_2$, infinitesimal group schemes of height $\leq r$, a finite dimensional
$\bG_1$-module $M_1$, and a finite dimensional $\bG_2$-module $M_2$.
Let $(x,y)$ be a scheme-theoretic point in $V_r(\bG_1) \times V_r(\bG_2)$.   Then
$$JT_{\bG_1\times \bG_2,M\otimes N,r}((x,y)) \quad = \quad JT_{\bG_1,M,r}(x) \otimes JT_{\bG_2,N,r}(y).$$
\end{ex}

\vskip .1in 

\begin{ex}
\label{ex:ht1}
Let $\bG$ be an infinitesimal group scheme of height 1 with Lie algebra $\fg$.  Thus, 
$k\bG$ is naturally identified with $\fu(\fg)$, the restricted enveloping algebra of $\fg$
and $V_1(\bG)$ is naturally identified with the  $p$-nilpotent cone $\cN_p(\fg) \subset \fg$.
(See \cite[Lem 1.6]{SFB1}.)

Let $V$ be a faithful representation of $\bG$ of dimension $N$, and associate to $V$
a closed embedding $\phi: \bG \hookrightarrow GL_{N(1)} \hookrightarrow GL_N$.
For any $\alpha \in \cN_p(\fg)(A)$, the associated 1-parameter subgroup of 
$\mu_\alpha: \bG_{a(1),A} \to \bG_A$ sends
$s\in \bG_{a(1)}(A)$ (i.e., $s$ a $p$-nilpotent element of $A$) to 
$$\mu_\alpha(s) \ = 1 + s\cdot \alpha + \frac{s^2}{2}\alpha^2 +
\cdots + \frac{s^{p-1}}{(p-1)!}\alpha^{p-1} \ \in \ \bG_{(1)}(A) \ \subset \ GL_N(A),$$
where the powers of $\alpha$ are taken in $\fg l_N$.
We associate to $\mu_\alpha$ the linear operator $(\mu_{\alpha})_*: A\otimes k\bG_{a(1)} \to \A\otimes k\bG$.

Let $M$ be a finite dimensional $\bG$-module of dimension $m$ and choose a scheme-theoretic
point $\alpha \in \cN_p(\fg)$, given by $\alpha \in k(\alpha)\otimes \cN_p(\fg)$.  Then
$JT_{\bG,M,1}(\alpha)$ is the Jordan type of $(\mu_{\alpha})_*(1\otimes u) \in k(\alpha)\otimes k\bG$
acting on $k(\alpha)\otimes M$.   This action is the action of $\alpha$ as an element of the 
Lie algebra $k(\alpha)\otimes \fg$ action on $k(\alpha)\otimes M$. 
\end{ex}

\vskip .1in

\begin{ex}
\label{ex:Gar} 
Assume $\bG = \bG_{a(r)}$ for some $r > 0$.  Then $\bG$ admits an embedding
in $GL_2$ as the $r$-th Frobenius kernel of the unipotent subgroup $U_2$ of strictly upper 
triangular matrices.  As in Example \ref{ex:weight},
$\bP V_r(\bG)$ is a weighted projective space with weights $(1,p,\cdots,p^{r-1})$
associated to the graded polynomial algebra $k[T_0,T_1,\ldots,T_{r-1}]$ where $T_i$ has 
weight $p^i$. 
Consider a non-zero $r$-tuple $\ul a = (a_0,\ldots,a_{r-1})$ of elements of $k(x)$ determining a 
scheme-theoretic point $x \in V_r(X)$ projecting to the scheme-theoretic point 
$\ol x \in \bP V_r(\bG)$, and consider the associated 1-parameter
subgroup  $\mu_x:  \bG_{a(r),k(x)} \to \bG_{a(r),k(x)}.$ 
An explicit (but quite complicated) formula for $\mu_{x*}(1\otimes u_{r-1}) \in k(x) \otimes k\bG_{a(r)}
\ = \ k(x)\otimes k[u_0,\ldots,u_{r-1}]/(u_i^p)$ 
 is given in \cite[(6.5.12)]{SFB2}.  

The sum of terms linear in the $u_i$'s in the 
expression for $\mu_{x*}(1\otimes u_{r-1})$  is $\sum_{i=0}^{r-1} a_{r-1-i}^{p^i}u_i\in k(x) \otimes k\bG$.
 Replacing $\mu_{x*}(1\otimes u_{r-1})$ by $\sum_{i=0}^{r-1} a_{r-1-i}^{p^i}u_i$
is a special case of replacing $\Theta_{\bG,r}$ by $\Theta^{exp}_{\fg,r}$ as discussed in Section \ref{sec:exp}.
\end{ex}

\vskip .2in


\section{Exponential Jordan type}
\label{sec:exp}

	In this section, we consider affine group schemes whose 1-parameter subgroups 
are given by exponentiating $r$-tuples of $p$-nilpotent, pair-wise commuting elements
of $\fg \equiv Lie(\bG)$.  The more explicit nature of these 1-parameter subgroups 
allows us to simplify our Jordan type functions, not necessarily preserving their values 
but preserving their close relationship with support varieties.  

For an affine group scheme $\bG$ equipped with an exponential map  $\cE_{(r)}: \bG_{a(r)} \times \cN_p(\fg) 
\to \bG_{(r)}$ of height $r$, there is a natural morphism \ $\Phi_{\fg,r}: \cC_r(\cN_p(\fg)) \ \to \ V_r(\bG)$
where $\cC_r(\cN_p(\fg))$ is the affine variety of $r$-tuples of $p$-nilpotent, pair-wise commuting 
elements of $\fg$ made explicit in Definition \ref{defn:cCr}.
We say that $\bG$ is of exponential type of height $r$ if  $\Phi_{\fg,r}$ is a homeomorphism
of scheme-theoretic points.  Examples include Frobenius kernels of various classical groups,
their parabolic subgroups, and unipotent subgroups of these parabolic subgroups.  (See, for 
example, \cite{Sob2}.)
For $\bG$ of exponential type of height $r$, Theorem \ref{thm:thetas} introduces $\Theta_{\fg,r}
\in k[\cC_r(\cN_p(\fg))] \otimes k\bG_{(r)}$ which very closely models $\Theta_{\bG,r}$.

This leads to our simpler $p$-nilpotent operator $\Theta_{\fg,r}^{exp}
\in k[\cC_r(\cN_p(\fg))]\ \otimes k\bG_{(r)}$ for $\bG$ of exponential type of height $r$, a 
linearization of $\Theta_{g,r}$.  A fundamental computation of \cite[Thm 1.12]{FPS} 
enables us to conclude that  $\Theta_{\fg,r}$ and $\Theta_{\fg,r}^{exp}$ determine the same
support varieties for finite dimensional $\bG_{(r)}$-modules.  By adopting the formalism
we have discussed for $JT_{\bG,r,M}(-)$, we establish the Jordan type functions
$JT_{\fg,r,M}(-)$ and $JT_{\fg,r,M}^{exp}(-)$ which satisfy similar properties.

\vskip .1in

We begin by defining the class of affine group schemes $\bG$ for which 
$r$-tuples of $p$-nilpotent, pair-wise commuting elements of the Lie algebra of $\bG$
determine height $r$ 1-parameter subgroups of $\bG$.

\begin{defn}
\label{defn:exp}
Let $\bG$ be an affine group scheme of finite type over $k$ with Lie algebra $\fg$.  
Fix a positive integer $r$.   We say that a morphism 
$$\cE_{(r)}:  \bG_{a(r)} \times \cN_p(\fg)  \quad \to \quad \bG_{(r)}$$
equips $\bG$ with the structure of an exponentiation of height $r$ if $\cE_{(r)}$
satisfies the following conditions for any commutative $k$-algebra $A$,  $p$-nilpotent 
elements $B , B^\prime \ \in A\otimes \fg$, 
and  elements $\alpha, \beta \in \bG_{a(r)}(A)$:
\begin{enumerate}
\item
The specialization of $\cE_{(r)}$ along $B: k[\cN_p(\fg)] \to A$, \ 
$\cE_{(r),B}:\bG_{a(r),A} \to \bG_{(r),A}$, \ is a 1-parameter subgroup of $\bG_{(r),A}$.
\item
If $B^\prime$ commutes with $B$, then
$$\cE_{(r),B}(\alpha)\cdot \cE_{(r),B^\prime}(\beta)= \cE_{(r),B^\prime}(\beta) 
\cdot \cE_{(r),B}(\alpha) \in \bG_{(r)}(A).$$
\item  
$\cE_{(r),\alpha \cdot B}(-) \ = \ \cE_{(r),B}( \alpha \cdot -): \bG_{a(r),A} \to \bG_{(r),A}$.
\end{enumerate}
\end{defn}  

\vskip .1in

As exhibited in Construction \ref{construct:cU}, the structure $\cE_{(r)}$ of exponentiation of height $r$
for $\bG$ enables us to ``exponentiate" $r$-tuples of $p$-nilpotent, pair-wise commuting elements
of $\fg$.  We first define the variety of such $r$-tuples, $\cC_r(\fg)$.

\begin{defn}
\label{defn:cCr}
Let $\bG$ be an affine group scheme of finite type over $k$ with Lie algebra $\fg$.
Consider the functor on commutative $k$ algebras sending $A$ to an $r$-tuple 
$\ul B = (B_0,\ldots,B_{r-1})$ of elements in $A\otimes \fg$ which satisfies the conditions that $B_s^{[p]} = 0$
for all $s, \ 0 \leq s < r$ and that $[B_s,B_{s^\prime}] = 0$ for all $s, s^\prime$ with  $0 \leq s,s^\prime < r$.
This functor is represented by an affine scheme $\cC_r(\fg)$.  
 In contrast to \cite[1.11.2]{F23}, we give $\cC_r(\fg)$ the monoid action $\bA^1 \times \cC_r(\fg) \to \cC_r(\fg)$
by sending $(\alpha;B_0,\ldots,B_{r-1})$ to $(\alpha\cdot B_0,\alpha^p\cdot B_1,\ldots,\alpha^{p^{r-1}}\cdot B_{r-1})$.  
\end{defn}

\vskip .1in

We now assume the $\bG$ is equipped with an exponential $\cE_{(r)}$ of height $r$.
The following construction of $\cE_{\fg,r}: \bG_{a(r)} \times \cC_r(\cN_p(\fg)) \ \to \ \bG_{(r)}$ 
given in (\ref{eqn:Er-def}) extends the exponential structure 
 $\cE_{(r)}: \bG_{a(r)} \times \cN_p(\fg)  \ \to \ \bG_{(r)}$.
 This leads to the ``Lie algebra analogue" 
$$\cU_{\fg,r}: \bG_{a(r)} \times \cC_r(\cN_p(\fg)) \quad  \to \quad \bG_{(r)} \times \cC_r(\cN_p(\fg))$$ 
of the universal 1-parameter subgroup 
$\cU_{\bG,r}: \bG_{a(r)} \times V_r(\bG) \to \bG_{(r)} \times V_r(\bG)$ of Definition \ref{defn:univ}.

\begin{construction}
\label{construct:cU}
Let $\bG$ be an affine group scheme of finite type over $k$ equipped with an exponentiation
$\cE_{(r)}:  \bG_{a(r)} \times \cN_p(\fg)  \to  \bG_{(r)}$
of height $r$.  For any $s, \ 0\leq s < r$, we define
\begin{equation}
\label{eqn:Ers-def} 
\cE_{r|s}  \quad \equiv \quad \cE_{(r)}  \circ (1\times pr_s) \circ (F^s \times 1):
 \bG_{a(r)}\times \cC_r(\fg) \ \to \ \bG_{(r)}
\end{equation}
where $pr_s$ is projection onto the $s$-th factor and $F^s$ is the $s$-th iterate of the Frobenius
morphism.  We denote by $\cE_{r|s,B}$ the restriction of $\cE_{r|s}$ along a (geometric) point
$B: k(B) \to \cN_p(\fg)$.

We define
\begin{equation}
\label{eqn:Er-def} 
\cE_{\fg,r} \quad \equiv \quad \prod \circ \ (\times_{s=0}^{r-1} \ \cE_{r|s}) \circ Diag
\end{equation} 
to be the composition 
$$\bG_{a(r)} \times \cC_r(\cN_p(\fg)) \ \to (\bG_{a(r)} \times \cC_r(\cN_p(\fg)))^{\times r} 
\ \to \ (\bG_{(r)})^{\times r} \ \to \bG_{(r)}
$$
where the product is taken with respect to the group structure of $\bG_{(r)}$.  

We define 
\begin{equation}
\label{eqn:cU}
\cU_{\fg,r|s} \equiv \cE_{r|s} \times pr_{\cC_r(\cN_p(\fg))}, \ \cU_{\fg,r} \equiv \cE_{\fg,r} \times pr_{\cC_r(\cN_p(\fg))}: 
\bG_{a(r)} \times \cC_r(\cN_p(\fg)) \ \to \ \bG_{(r)} \times \cC_r(\cN_p(\fg)),
\end{equation}
given by maps on coordinate agebras as the 
the $k[\cC_r(\fg)]$-linear extensions of $\cE_{r|s}^*, \ \cE_{\fg,r}^*$ along $k \to k[\cC_r(\cN_p(\fg))]$.
\end{construction}

\vskip .1in

The following proposition identifies the specializations of the maps $\cU_{\fg,r|s}$ and $\cU_{\fg,r}$ 
 at a geometric point of $\cC_r(\cN_p(\fg))$.  In particular,
 the specialization of $\cU_{\fg,r}$ at a geometric point $\ul B = (B_0,\ldots,B_{r-1})$ of $\cC_r(\cN_p(\fg))$ 
is expressed as a product of Frobenius twists of exponentiations of the $B_i$'s.  

\begin{prop}
\label{prop:specialize}
Retain the hypotheses and notation of  Definition \ref{defn:exp}. 
For any geometric point $\ul B = (B_0,\ldots,B_{r-1})$ of $\cC_r(\cN_p(\fg))$, the specialization of
$\cU_{\fg,r|s}$ at $\ul B$ is denoted by $\cU_{\fg,r|s,B_s}$.  Then
the specialization of $\cU_{\fg,r}$ at $\ul B$ is given by
\begin{equation}
\label{eqn:specialize-Urs}
\cU_{\fg,r,\ul B } \ = \ \prod_{s=0}^{r-1} \cU_{\fg,r|s,B_s} \circ F^s: \bG_{a(r),k(\ul B)}\ \to 
\bG_{a(r),k(\ul B)}\ \to \ \bG_{(r),k(\ul B)}.
\end{equation}
In particular, 
\begin{equation}
\label{eqn:special}
(\cU_{\fg,r,\ul B})_*: k(\ul B) \otimes k\bG_{a(r)} \to k(\ul B) \otimes k\bG_{(r)}, \quad
1\otimes u_{r-1} \ \mapsto \ \prod_{s=0}^{r-1} (\cE_{r|s,B_s})_*(u_{r-s-1}).
\end{equation}
\end{prop}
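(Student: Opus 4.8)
The plan is to obtain both assertions by commuting ``specialization at $\ul B$'' past the morphisms used to assemble $\cE_{\fg,r}$ in Construction \ref{construct:cU}, and then to dualize. For (\ref{eqn:specialize-Urs}): by construction $\cU_{\fg,r}=\cE_{\fg,r}\times pr_{\cC_r(\cN_p(\fg))}$ with $\cE_{\fg,r}=\prod\circ(\times_{s=0}^{r-1}\cE_{r|s})\circ Diag$ and $\cE_{r|s}=\cE_{(r)}\circ(1\times pr_s)\circ(F^s\times 1)$. Base change along the geometric point $\chi_{\ul B}\colon\Spec k(\ul B)\to\cC_r(\cN_p(\fg))$ is a functor preserving finite products of schemes, composition, the diagonal $Diag$, the projections $pr_s$, the Frobenius morphisms $F^s$, and the group multiplication of $\bG_{(r)}$, all of which are defined over $k$. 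Hence the specialization of $\cE_{\fg,r}$ at $\ul B$ is the product, taken in the group $\bG_{(r),k(\ul B)}$, of the specializations of the $\cE_{r|s}$; and since $pr_s$ followed by $\chi_{\ul B}$ is the point $B_s\in\cN_p(\fg)$, the specialization of $\cE_{r|s}$ at $\ul B$ is $\cU_{\fg,r|s,B_s}\circ F^s$. The $\cC_r(\cN_p(\fg))$-component of $\cU_{\fg,r}$ becomes constant, so we arrive at (\ref{eqn:specialize-Urs}). This step is pure bookkeeping and should present no difficulty.

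For (\ref{eqn:special}) I would apply the ``group algebra'' functor $(-)_*$, i.e. the $k(\ul B)$-linear dual of the map on coordinate algebras, to the identity (\ref{eqn:specialize-Urs}). This functor is covariantly functorial in morphisms of group schemes; it sends each Frobenius $F^s$ to the map $(F^s)_*\colon k\bG_{a(r)}\to k\bG_{a(r)}$ dual to $t\mapsto t^{p^s}$ on $k[\bG_{a(r)}]=k[t]/t^{p^r}$, so that $(F^s)_*(u_{r-1})=u_{r-s-1}$; and it carries a product of group-scheme homomorphisms with pairwise commuting images — which is the situation here, by Definition \ref{defn:exp}(2) — to the multiplication of the group algebra $k\bG_{(r)}$ precomposed with the slotwise application of the individual $(-)_*$'s and the iterated comultiplication of $k\bG_{a(r)}$. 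Feeding $1\otimes u_{r-1}$ through the resulting composite, the $s$-th tensor factor receives (after $(F^s)_*$) the element $u_{r-s-1}$, and reassembling the factors yields $\prod_{s=0}^{r-1}(\cE_{r|s,B_s})_*(u_{r-s-1})$, which is (\ref{eqn:special}).

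The point requiring genuine care, and which I expect to be the main obstacle, is controlling the iterated comultiplication of $u_{r-1}$ in $k\bG_{a(r)}$: one must verify that, once the maps $(F^s)_*$ are applied slotwise, only the term producing $u_{r-s-1}$ in the $s$-th factor survives. This is exactly where one uses that $(F^s)_*$ annihilates the functional dual to $t^a$ whenever $p^s\nmid a$, which collapses the comultiplication to the single relevant summand. This computation depends only on the Hopf algebra $k\bG_{a(r)}$, hence is uniform in $\bG$, and it is the same bookkeeping that underlies the explicit formulas of \cite[\S6]{SFB2} recalled in Example \ref{ex:Gar}; invoking those identities completes the argument.
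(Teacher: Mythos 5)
Your first step is correct and is exactly the paper's (very terse) argument: base change along $\chi_{\ul B}$ commutes with $Diag$, the projections $pr_s$, $F^s$, and the multiplication of $\bG_{(r)}$, so (\ref{eqn:specialize-Urs}) is immediate from (\ref{eqn:Ers-def}) and (\ref{eqn:Er-def}).

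The final paragraph, however, contains a genuine error. The iterated comultiplication of $u_{r-1}$ in $k\bG_{a(r)}$ is $\Delta^{(r-1)}(u_{r-1})=\sum v_{a_0}\otimes\cdots\otimes v_{a_{r-1}}$, the sum over all $(a_0,\ldots,a_{r-1})$ with $\sum_s a_s=p^{r-1}$, where $v_a$ is dual to $t^a$. Applying $(F^s)_*$ in the $s$-th slot kills precisely the terms with $p^s\nmid a_s$ and sends $v_{a_s}$ to $v_{a_s/p^s}$ otherwise; this does \emph{not} collapse the sum to a single term. For $r>1$ many terms survive (already for $r=3$, $p=2$ the tuple $(2,2,0)$ survives alongside the three ``linear'' tuples $(p^{r-1},0,\ldots,0),\ldots$), and the term you want to keep --- $u_{r-s-1}$ in \emph{every} slot $s$ --- corresponds to $a_s=p^{r-1}$ for all $s$, which violates $\sum_s a_s=p^{r-1}$ and hence does not occur at all. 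The surviving terms with exactly one nonzero $a_s$ yield the \emph{sum} $\sum_s(\cE_{r|s,B_s})_*(u_{r-s-1})=\Theta^{exp}_{\fg,r,\ul B}$, and the remaining terms are the products of at least two $p$-nilpotent commuting factors invoked in the proof of Proposition \ref{prop:linear}; your collapse would force $\Theta_{\fg,r}=\Theta^{exp}_{\fg,r}$ (or worse, make $\Theta_{\fg,r}$ a literal $r$-fold product lying in $I_{\bG_{(r)}}^r$, with no linear part), contradicting Remark \ref{remark:leq2} and the explicit formulas in Examples \ref{ex:Theta} and \ref{ex:Gar}. The underlying misreading is of the symbol $\prod$ in (\ref{eqn:special}): it is not the product in $k\bG_{(r)}$ of the $r$ elements $(\cE_{r|s,B_s})_*(u_{r-s-1})$, but shorthand for the convolution product, i.e.\ the image of $u_{r-1}$ under $m^{(r-1)}\circ\bigl(\otimes_s(\cE_{r|s,B_s}\circ F^s)_*\bigr)\circ\Delta^{(r-1)}$, exactly as spelled out via Sobaje's decomposition in the proof of Proposition \ref{prop:linear}. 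With that reading, (\ref{eqn:special}) is simply the group-algebra dual of (\ref{eqn:specialize-Urs}) --- the pushforward of a pointwise product of homomorphisms is the convolution of the pushforwards --- and no computation with the comultiplication is needed; this is why the paper says the second assertion ``follows immediately'' from the first.
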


\begin{proof}
Equations (\ref{eqn:specialize-Urs}) and (\ref{eqn:specialize-Urs}) follow from equations (\ref{eqn:Ers-def}) and
(\ref{eqn:Er-def}) together with the definitions of $\cU_{\fg,r|s,\ul B}$ and $\cU_{\fg,r,\ul B }$ as 
$k[\cC_r(\cN_p(\fg))]$-linear extensions of $\cE_{r|s}, \ \cE_{\fg,r}$.

Equation (\ref{eqn:special}) follows immediately from (\ref{eqn:specialize-Urs}).
\end{proof}

\vskip .1in

We proceed to investigate the close relationship between $\cU_{\bG,r}$ and $\cU_{\fg,r}$.

\begin{prop}
\label{prop:action}
Let $\bG$ and $\cE_{(r)}$ satisfy the conditions of  Definition \ref{defn:exp}.  
\begin{enumerate}
\item
The morphisms 
$$\cU_{\fg,r|s}, \ \cU_{\fg,r}:  \bG_{a(r),k[\cC_r(\cN_p(\fg))] } \quad \to \quad \bG_{(r),k[\cC_r(\cN_p(\fg))]}$$
are 1-parameter subgroups.
\item
These 1-parameter  subgroups determine morphisms 
$$\Phi_{\fg,r|s}, \ \Phi_{\fg,r}:  \cC_r(\cN_p(\fg)) \quad \to \quad V_r(\bG)$$
with the property that their pull-backs of the universal 1-parameter group 
\ $\cU_{\bG,r}:  \bG_{a(r),k[V_r(\bG)] } \to \ \bG_{(r),k[V_r(\bG)]}$\
equal the 1-parameter subgroups \ $\cU_{\fg,r|s}, \ \cU_{\fg,r}$.
\item
The morphisms  $\Phi_{\fg,r|s}, \Phi_{\fg,r}$ commute with the monoid actions of $\bA^1$ on 
$\cC_r(\cN_p(\fg))$ and  $V_r(\bG)$ in the sense that each of the  following squares commute:
\begin{equation}
\label{eqn:restrict-diagram}
 \xymatrixcolsep{5pc}\xymatrix{
 \cC_r(\cN_p(\fg)) \times \bA^1 \ar[d]^{\Phi_{\fg,r|s} \times 1} \ar[r] &  \cC_r(\cN_p(\fg)) \ar[d]^{\Phi_{r,s}} \\
 V_r(\bG) \times \bA^1  \ar[r] &  V_r(\bG).
}
\end{equation}
\end{enumerate}
\end{prop}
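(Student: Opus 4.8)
The plan is to deduce all three assertions from the universal property of $V_r(\bG)$ together with Construction \ref{construct:cU}. For assertion (1), I would argue that each $\cE_{r|s}$ is a composite of the exponentiation $\cE_{(r)}$ with morphisms ($F^s$ and the projection $pr_s$) that respect the relevant group-scheme structures, so by Definition \ref{defn:exp}(1) each $\cE_{r|s,B}$ is a $1$-parameter subgroup for every geometric point $B$; the map $\cE_{\fg,r}$ is then a finite product (in the group $\bG_{(r)}$) of such maps after diagonal embedding. To see that the product $\cE_{\fg,r}$ is itself a homomorphism $\bG_{a(r)}\to\bG_{(r)}$ — not merely a morphism of schemes — one needs that the factors $\cE_{r|s,\ul B}\circ F^s$ pairwise commute; this is exactly where Definition \ref{defn:exp}(2) enters, since the entries $B_s$ of a point of $\cC_r(\cN_p(\fg))$ are by construction pairwise commuting. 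Since being a $1$-parameter subgroup over the base $k[\cC_r(\cN_p(\fg))]$ can be checked after passing to the residue field at each point (the defining equations are closed conditions), Proposition \ref{prop:specialize} — which identifies the fiber over $\ul B$ with $\prod_s \cU_{\fg,r|s,B_s}\circ F^s$ — reduces the claim to the pointwise statement just sketched.

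For assertion (2), I would invoke the representability of $V_r(\bG)$ from Definition \ref{defn:VrG}: a $1$-parameter subgroup of $\bG_{(r),A}$ over a $k$-algebra $A$ is the same datum as a morphism $\Spec A\to V_r(\bG)$. Applying this with $A = k[\cC_r(\cN_p(\fg))]$ to the $1$-parameter subgroups $\cU_{\fg,r|s}$, $\cU_{\fg,r}$ of assertion (1) produces the desired morphisms $\Phi_{\fg,r|s}$, $\Phi_{\fg,r}:\cC_r(\cN_p(\fg))\to V_r(\bG)$, and the universality of $\cU_{\bG,r}$ (Definition \ref{defn:univ}) gives, by construction, that pulling $\cU_{\bG,r}$ back along these morphisms recovers $\cU_{\fg,r|s}$, $\cU_{\fg,r}$ respectively. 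This is essentially a restatement of the Yoneda-type correspondence, so little work is required here beyond bookkeeping.

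Assertion (3) is the one requiring genuine (if routine) computation. The monoid action on $\cC_r(\cN_p(\fg))$ is the one specified in Definition \ref{defn:cCr}, sending $(\alpha;\ul B)$ to $(\alpha\cdot B_0,\alpha^p\cdot B_1,\dots,\alpha^{p^{r-1}}\cdot B_{r-1})$, while the action on $V_r(\bG)$ is that of Definition \ref{defn:grading}, induced by rescaling the parameter $t$ of $\bG_{a(r)}$. To check commutativity of the square it suffices — again by representability — to verify that the two composites send a point $(\alpha;\ul B)$ to the same $1$-parameter subgroup; equivalently, using Proposition \ref{prop:specialize}, that $\cU_{\fg,r|s}$ evaluated at $(\alpha^{p^s}\cdot B_s)$ agrees with $\alpha\cdot\big(\cU_{\fg,r|s}$ evaluated at $B_s\big)$ in the sense of the $\bA^1$-action on $V_r(\bG)$. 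This follows from Definition \ref{defn:exp}(3), which governs how $\cE_{(r),B}$ transforms under rescaling $B$, combined with the observation that the Frobenius twist $F^s$ interacts with the scaling $\alpha\mapsto\alpha^{p^s}$ in precisely the way that matches the weight-$p^s$ grading convention of Definition \ref{defn:cCr}. The main obstacle, and the only place real care is needed, is tracking these powers of $p$ consistently: one must confirm that the $F^s$ appearing in $\cE_{r|s}=\cE_{(r)}\circ(1\times pr_s)\circ(F^s\times 1)$ converts the degree-$p^s$ scaling on the $s$-th coordinate of $\cC_r$ into the degree-$1$ scaling on $\bG_{a(r)}$, so that Definition \ref{defn:exp}(3) applies verbatim to each factor and the product $\cE_{\fg,r}$ inherits the compatibility. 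For $\cU_{\fg,r}$ itself the square then commutes because each of its constituent factors does and the product in $\bG_{(r)}$ is $\bA^1$-equivariant coordinatewise.
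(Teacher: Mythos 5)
Your proposal is correct and follows essentially the same route as the paper: assertion (1) via Proposition \ref{prop:specialize} together with Definition \ref{defn:exp}(2) to see that the commuting factors multiply to a homomorphism, assertion (2) from the universal property of $\cU_{\bG,r}$, and assertion (3) by reducing to a single coordinate $B_s$ and using Definition \ref{defn:exp}(3) to match the weight-$p^s$ scaling on $\cC_r(\cN_p(\fg))$ with the rescaling of $t$ through $F^s$. Your bookkeeping of the powers of $p$ in part (3) is exactly the computation $\cE_{(r),\alpha^{p^s}B_s}(t)=\cE_{(r),B_s}(\alpha^{p^s}t)$ that the paper records.
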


\begin{proof}
As can be seen using Proposition \ref{prop:specialize} and 
the condition of Definition \ref{defn:exp}(2),
$\cU_{\fg,r|s,\ul B}$ \  and\ $\cU_{\fg,r,\ul B}$ are morphisms of group schemes
over $\ul B$. 
This implies assertion (1).  Assertion 2 follows from the universal property of 
$\cU_{\bG,r}:  \bG_{a(r),k[V_r(\bG)] } \quad \to \quad \bG_{(r),k[V_r(\bG)]}$.

To check the commutativity of (\ref{eqn:restrict-diagram}), it suffices to observe for any $\alpha
\in A$ and for any $A$-point  $\cC_r(\bG)$ of the form $(0,\ldots,B_s,\ldots 0)$ (and thus
of weight $p^s$) that $\cE_{(r),\alpha\cdot \ul B} (t) \ = \cE_{(r),(0,\ldots,\alpha^{p^s}B_s,\ldots 0)} (t)
= \cE_{(r),\ul B}(\alpha^{p^s}t) = (\alpha\cdot \cE_{(r),\ul B})(t)$.
\end{proof}

\vskip .1in

In \cite{SFB1}, Suslin, Bendel, and the author introduced the condition that an 
embedding $\bG \hookrightarrow GL_N$ 
of linear algebraic groups be of exponential type; they observe that various algebraic groups satisfy
this condition, including many classical simple groups, their parabolic subgroups, and unipotent radicals 
of these parabolic subgroups. 
Subsequent generalizations of this condition on $\bG$ occur in \cite{F15} and \cite{F23}.  
A more definitive account of such linear algebraic groups can be found in
P. Sobaje's article \cite{Sob2}.  The following definition is a natural generalization of those
earlier considerations of groups of exponential type.  The motivating special case is the exponential
function of $GL_N$ applied to a $p$-nilpotent element of $\fg l_N$.

\begin{defn}
\label{defn:exp-type}
Let $\bG$ be an affine group scheme of finite type over $k$ equipped with an
exponentiation $\cE_{(r)}: \bG_{a(r)} \times \cN_p(\fg)  \ \to \ \bG_{(r)}$ of height $\leq r$.
 We say that $\bG$ equipped with $\cE_{(r)}$ is  {\it of exponential type of height $r$}
if the morphism $ \Phi_{\fg,r}: \cC_r(\fg) \ \to \ V_r(\bG)$ of Theorem \ref{prop:action}(1)
induces a bijection on geometric points.   
\end{defn}

\vskip .1in

In \cite[Defn 1.6]{F91}, the notions of a ``continuous algebraic map" and a ``bicontinuous morphism" 
were introduced in order to study rational maps between schemes which induce well defined maps
on geometric points.   In that work, the topologies of interest were either the \`etale topology or 
the analytic topology (in the case that the ground field was the complex numbers).  We are now
working with varieties over $k$, a field of characteristic $p > 0$.

\begin{lemma}
\label{lem:bicont}
 Let $(\bG, \cE_{(r)})$ be an affine group scheme of \it exponential type of height $r$.  
 The bicontinuous morphism
  \begin{equation}
  \label{eqn:Phi}
 \Phi_{\fg,r}: \cC_r(\cN_p(\fg)) \quad \to \quad V_r(\bG)
 \end{equation}
 is a homeomorphism of Zariski topological spaces which commutes with $\bA^1$-actions.
\end{lemma}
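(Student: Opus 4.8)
The plan is to extract the claim from the combination of Definition \ref{defn:exp-type} and Proposition \ref{prop:action}, together with the general principle that a morphism of finite-type $k$-schemes which induces a bijection on geometric points is a homeomorphism on the underlying Zariski spaces, provided one knows it is what \cite{F91} calls a bicontinuous morphism (i.e.\ the rational inverse is itself a continuous algebraic map). First I would recall that $\Phi_{\fg,r}$ is a genuine morphism of affine $k$-schemes of finite type (it is the classifying map of the $1$-parameter subgroup $\cU_{\fg,r}$ over $k[\cC_r(\cN_p(\fg))]$, by Proposition \ref{prop:action}(2)), so it is in particular a continuous map of Zariski spaces. By Definition \ref{defn:exp-type}, $\Phi_{\fg,r}$ induces a bijection on geometric points; since both source and target are of finite type over $k$, this upgrades to a bijection on scheme-theoretic points and hence to a bijection of the underlying topological spaces.

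Next I would argue that the inverse map is continuous. Here I would invoke the bicontinuity already built into the setup: the morphism $\Phi_{\fg,r}$ of \eqref{eqn:Phi} is the "bicontinuous morphism" referred to in the statement, and the machinery of \cite[Defn 1.6]{F91} guarantees precisely that a bicontinuous morphism inducing a bijection on geometric points is a homeomorphism of the Zariski topological spaces. Concretely, one knows that $\Phi_{\fg,r}$ restricts to isomorphisms on suitable locally closed strata (e.g.\ the stratification of $V_r(\bG)$ coming from the combinatorics of heights $p^s$ of the components $B_s$, matched with the analogous stratification of $\cC_r(\cN_p(\fg))$), so that $\Phi_{\fg,r}$ is a bijective morphism which is an isomorphism onto its image stratum by stratum; a bijective morphism of finite-type $k$-schemes that is a local isomorphism on a finite stratification is a homeomorphism, because a subset of $V_r(\bG)$ is closed iff its intersection with each stratum closure is closed, and these closures are matched bijectively and bicontinuously by $\Phi_{\fg,r}$.

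Finally I would verify compatibility with the $\bA^1$-actions, which is immediate from Proposition \ref{prop:action}(3): the square \eqref{eqn:restrict-diagram} (taken with $\Phi_{\fg,r}$ in place of $\Phi_{\fg,r|s}$) commutes, which is exactly the assertion that $\Phi_{\fg,r}$ intertwines the monoid action of $\bA^1$ on $\cC_r(\cN_p(\fg))$ of Definition \ref{defn:cCr} with the grading action on $V_r(\bG)$ of Definition \ref{defn:grading}.

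I expect the main obstacle to be the second step: passing from "bijective morphism of finite-type $k$-schemes" to "homeomorphism" is false in general (the Frobenius morphism is the standard counterexample, and indeed $\Phi_{\fg,r}$ need not be an isomorphism of schemes — only of topological spaces). One must genuinely use the extra input — either the bicontinuity property of \cite{F91}, or an explicit stratification argument showing $\Phi_{\fg,r}$ is a piecewise isomorphism — to rule out a Frobenius-type pathology and conclude the inverse is continuous. Everything else (finite type, the bijection on points from Definition \ref{defn:exp-type}, the $\bA^1$-equivariance from Proposition \ref{prop:action}(3)) is essentially bookkeeping.
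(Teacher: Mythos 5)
Your proposal correctly isolates the crux of the lemma --- a bijective morphism of finite-type $k$-schemes is continuous, but its set-theoretic inverse need not be --- yet neither of the two ways you offer to get past this point actually works, so there is a genuine gap exactly where you predicted one. Invoking ``the bicontinuity already built into the setup'' is circular: the only hypothesis supplied by Definition \ref{defn:exp-type} is that $\Phi_{\fg,r}$ induces a bijection on geometric points, and the word ``bicontinuous'' in the statement of the lemma names the conclusion to be established, not an extra assumption one may quote. Your fallback stratification argument is unsubstantiated and implausible as stated: $\Phi_{\fg,r}$ is assembled from Frobenius twists (the component $B_s$ enters through $\cE_{r|s}\circ(F^s\times 1)$, and the induced map of graded algebras multiplies degrees by powers of $p$), so one should not expect it to restrict to an \emph{isomorphism} of schemes on any stratum --- already for $\bG=\bG_a$ the relevant comparison of coordinate algebras raises variables to $p$-power exponents as in Example \ref{ex:weight}. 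Even the weaker claim that it is a homeomorphism stratum by stratum, with strata closures matched, is precisely the kind of statement that requires the argument you have omitted.

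The paper closes this gap by a properness argument. Since $\bP\Phi_{\fg,r}\colon \bP\cC_r(\cN_p(\fg))\to \bP V_r(\bG)$ is a morphism between schemes that are $\Proj$ of finitely generated graded $k$-algebras, hence proper over $k$, it is itself proper and therefore closed; a continuous closed bijection is a homeomorphism. One then removes the cone points (which are closed and correspond under $\Phi_{\fg,r}$) to conclude that $\cC_r(\cN_p(\fg))-\{0\}\to V_r(\bG)-\{0\}$ is a homeomorphism, and adds them back to get the homeomorphism of affine cones. If you replace your second step by this closedness argument, the rest of your outline --- continuity of $\Phi_{\fg,r}$ as a morphism, the upgrade from geometric to scheme-theoretic points, and the $\bA^1$-equivariance read off from Proposition \ref{prop:action}(3) --- is sound and matches the paper.
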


\begin{proof}
The assumption of exponential type of height $r$ implies that $\Phi_{\fg,r}$ is a bijection on scheme-theoretic
points.  Since $\Phi_{\fg,r}$ is a morphism, it is continuous.  Since $\bP \Phi_{\fg,r}: \bP \cC_r(\cN_p(\fg))
 \to \bP V_r(\bG)$ is 
proper, it is a closed map, thus induces a bijection on closed subsets, thus a homeomorphism.  This 
implies that the pull-back/restriction $\cC_r(\cN_p(\fg))  - \{ 0 \} \to V_r(\bG) - \{ 0 \}$ is a homeomorphism.
Since the cone point $0$ is closed in both $\cC_r(\cN_p(\fg))$ and $V_r(\bG)$, this implies 
that $\Phi_{\fg,r}$ is also a homeomorphism.
\end{proof}

\vskip .1in

We adapt the construction of $\Theta_{\bG,r}$ in Definition \ref{defn:univ-Theta} and its use
in Theorem \ref{thm:theta} by replacing $\cU_{\bG,r}$ by $\cU_{\fg,r}$.

\begin{thm}
\label{thm:thetas}
 Let $(\bG, \cE_{(r)})$ be an affine group scheme of \it exponential type of height $r$. 
 We define the $p$-nilpotent operators
\begin{equation}
\label{eqn:theta-exp}
\Theta_{\fg,r|s} \ \equiv \ (\cU_{\fg,r|s})_*(1\otimes u_{r-1}), \ \Theta_{\fg,r} \ \equiv \ (\cU_{\fg,r})_*(1\otimes u_{r-1}) \
 \in \ k[\cC_r(\cN_p(\fg)))] \otimes k\bG_{(r)},
\end{equation}
and denote by $\Theta_{\fg,r|s,\ul B}, \  \Theta_{\fg,r,\ul B}$ their specializations at a scheme-theoretic point
$\ul B \in \cC_r(\cN_p(\fg))$.
\begin{enumerate}
\item
$\Theta_{\fg,r}  \ = \ (\Phi^*_{\fg,r} \otimes 1)(\Theta_{\bG,r})$.
\item
$\Theta_{\fg,r|s,\ul B} \  = \ (\cE_{r|s,B_s})_*(u_{r-s-1}), \ \Theta_{\fg,r,\ul B} \ = \prod_{s=0}^{r-1} \Theta_{\fg,r|s,\ul B} 
\quad \in \ k(\ul B) \otimes k\bG_{(r)},$
elements of the group algebra of $\bG_{(r),k(\ul B)}$.
\item
Both $\Theta_{\fg,r|s}$ and $\Theta_{\fg,r}$ are homogeneous of degree $p^{r-1}$ with 
respect to the grading on $k[\cC_r(\cN_p(\fg))]$.
\item
 For any $\bG_{(r)}$-module $M$, tensor product with $\Theta_{\fg,r|s}$ and with $\Theta_{\fg,r}$
determines $p$-nilpotent, $k[\cC_r(\cN_p(\fg))]$-linear operators
$$\Theta_{\fg,r|s,M}, \ \Theta_{\fg,r,M}: k[\cC_r(\cN_p(\fg))] \otimes M \ \to \ k[\cC_r(\cN_p(\fg))] \otimes M.$$
\item
The base changes along a scheme theoretic point $\ul B \in \cC_r(\cN_p(\fg))$ of the operators 
$\Theta_{\fg,r|s,M}, \ \Theta_{\fg,r,M}$  
are given by the actions of $\Theta_{\fg,r|s,\ul B}, \  \Theta_{\fg,r,\ul B} $  on $k(\ul B) \otimes M$.  
\end{enumerate}
\end{thm}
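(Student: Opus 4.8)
The plan is to deduce Theorem \ref{thm:thetas} from the already-established machinery for $\Theta_{\bG,r}$ (Theorem \ref{thm:theta}) together with the identification of $\cU_{\fg,r}$ as a pull-back of $\cU_{\bG,r}$ along $\Phi_{\fg,r}$ (Proposition \ref{prop:action}(2)). The key observation is that nearly every assertion transports from $V_r(\bG)$ to $\cC_r(\cN_p(\fg))$ along $\Phi_{\fg,r}$, and the only genuinely new input is the homogeneity claim (3), which requires knowing that $\Phi_{\fg,r}^*$ is a graded homomorphism.

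First I would prove assertion (1). By Proposition \ref{prop:action}(2), the $1$-parameter subgroup $\cU_{\fg,r}$ over $k[\cC_r(\cN_p(\fg))]$ is the restriction of $\cU_{\bG,r}$ along the morphism $\Phi_{\fg,r}: \cC_r(\cN_p(\fg)) \to V_r(\bG)$, i.e.\ the following square commutes:
\begin{equation}
\xymatrix{
\bG_{a(r),k[\cC_r(\cN_p(\fg))]} \ar[r]^{(\cU_{\fg,r})_*} & \bG_{(r),k[\cC_r(\cN_p(\fg))]} \\
\bG_{a(r),k[V_r(\bG)]} \ar[u]^{\Phi_{\fg,r}^*} \ar[r]^{(\cU_{\bG,r})_*} & \bG_{(r),k[V_r(\bG)]}. \ar[u]_{\Phi_{\fg,r}^*}
}
\end{equation}
Applying both composites to $1 \otimes u_{r-1}$ gives $(\cU_{\fg,r})_*(1 \otimes u_{r-1}) = (\Phi_{\fg,r}^* \otimes 1)\big((\cU_{\bG,r})_*(1 \otimes u_{r-1})\big)$, which is exactly $\Theta_{\fg,r} = (\Phi_{\fg,r}^* \otimes 1)(\Theta_{\bG,r})$. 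The same argument with $\cU_{\fg,r|s}$ handles the $|s$-variant (it is the restriction of $\cU_{\bG,r}$ along $\Phi_{\fg,r|s}$).

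Next, assertion (2) is a direct computation from Proposition \ref{prop:specialize}: the specialization $(\cU_{\fg,r,\ul B})_*$ sends $1 \otimes u_{r-1}$ to $\prod_{s=0}^{r-1}(\cE_{r|s,B_s})_*(u_{r-s-1})$ by equation (\ref{eqn:special}), and $(\cU_{\fg,r|s,\ul B})_*$ sends it to $(\cE_{r|s,B_s})_*(u_{r-s-1})$; this is the definition of $\Theta_{\fg,r,\ul B}$ and $\Theta_{\fg,r|s,\ul B}$ as specializations. For assertion (3): by Theorem \ref{thm:theta}(5), $\Theta_{\bG,r}$ is homogeneous of degree $p^{r-1}$ in $k[V_r(\bG)]$; since by Proposition \ref{prop:action}(3) the map $\Phi_{\fg,r}$ commutes with the $\bA^1$-actions on source and target, the induced map $\Phi_{\fg,r}^*: k[V_r(\bG)] \to k[\cC_r(\cN_p(\fg))]$ is a homomorphism of graded $k$-algebras, hence carries homogeneous degree-$p^{r-1}$ elements to homogeneous degree-$p^{r-1}$ elements; combined with (1) this gives homogeneity of $\Theta_{\fg,r}$, and likewise for $\Theta_{\fg,r|s}$ using $\Phi_{\fg,r|s}$. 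Assertions (4) and (5) are then formal, exactly parallel to Theorem \ref{thm:theta}(3) and (4): $\Theta_{\fg,r,M}$ is defined by the bilinear pairing $(k[\cC_r(\cN_p(\fg))] \otimes k\bG_{(r)}) \otimes (k[\cC_r(\cN_p(\fg))] \otimes M) \to k[\cC_r(\cN_p(\fg))] \otimes M$ restricted to $\{\Theta_{\fg,r}\} \otimes (k[\cC_r(\cN_p(\fg))] \otimes M)$; $p$-nilpotence follows because the $i$-th power of $\Theta_{\fg,r}$ is $(\cU_{\fg,r})_*((1 \otimes u_{r-1})^i)$ and $(1 \otimes u_{r-1})^i = 0$ for $i \geq p$ in $k\bG_{a(r)}$; and the base change along $\ul B$ is computed by the commuting square analogous to (\ref{eqn:pullback-theta}), reducing to assertion (2).

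The main obstacle, such as it is, is really just bookkeeping: one must be careful that the grading on $\cC_r(\cN_p(\fg))$ chosen in Definition \ref{defn:cCr} (weights $p^s$ on $B_s$) is the one that makes $\Phi_{\fg,r}$ weight-preserving for the grading on $V_r(\bG)$ of Definition \ref{defn:grading}, which is precisely what Proposition \ref{prop:action}(3) records — the compatibility $\cE_{(r),\alpha\cdot\ul B}(t) = \cE_{(r),\ul B}(\alpha^{p^s}t)$ for a weight-$p^s$ tuple. Once this is in hand the homogeneity of $\Theta_{\fg,r}$ is immediate from Theorem \ref{thm:theta}(5); there is no delicate computation of the kind carried out in \cite{FP3} to reprove, since we inherit it.
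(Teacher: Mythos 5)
Your proposal is correct and follows essentially the same route as the paper: assertion (1) from Proposition \ref{prop:action}(2) and the definitions, assertion (2) from Proposition \ref{prop:specialize}, assertion (3) by transporting Theorem \ref{thm:theta}(5) along the graded map $\Phi_{\fg,r}^*$ (whose gradedness comes from Proposition \ref{prop:action}(3)), and assertions (4)--(5) as formal analogues of Theorem \ref{thm:theta}(3),(4). Your write-up simply makes explicit the commuting squares and the gradedness of $\Phi_{\fg,r}^*$ that the paper leaves implicit (and spells out in the proof of Proposition \ref{prop:proj-fg}).
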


\begin{proof}
Observe that the elements $\Theta_{\fg,r|s}, \  \Theta_{\fg,r}$ are $p$-nilpotent because $1\otimes u_{r-1}
\in \bG_{a(r),\cC_r(\cN_p(\fg))]}$ is $p$-nilpotent.
Assertion (1) follows from Proposition \ref{prop:action}(2), the definition of $\Theta_{\bG,r}$ in 
Definition \ref{defn:univ-Theta}, and the definition of $\Theta_{\fg,r}$ given above.
The first part of assertion (2) identifying $\Theta_{\fg,r|s,\ul B}$ follows from Proposition \ref{prop:specialize},
so that the second part is verified using the fact that specialization along $k[\cC_r(\cN_p(\fg))] \to k(\ul B)$,
$k[\cC_r(\cN_p(\fg))]\otimes \bG_{(r)} \to k(\ul B) \otimes \bG_{(r)}$,
commutes with the product map of $\bG_{(r)}$.

Assertion (3) follows from Theorem \ref{thm:theta}(5).
Assertions (4) and (5) and  their proofs are direct analogues of Theorem \ref{thm:theta}(3),(4).
\end{proof}

\vskip .1in

As an immediate corollary of Theorem \ref{thm:thetas}(1), we have the following.

\begin{cor}
\label{cor:aff-supp}
 Let $(\bG, \cE_{(r)})$ be an affine group scheme of \it exponential type of height $r$.
Consider  a geometric point $\ul B$ of $\cC_r(\fg)$ with image $x = \Phi_{\fg,r}(\ul B)$.
 For any finite dimensional  $\bG_{(r)}$-module $M$, 
 $$JT(\Theta_{\fg,r,M})(\ul B)) \quad = \quad JT(\Theta_{\bG,r,M})(x).$$
 In other words,
 $$JT(\Theta_{\fg,r,M})(-) \ = \ JT(\Theta_{\bG,rM}(-) \circ \Phi_{\fg,r}(-): 
 \bP \cC_r(\fg) \ \to \ \cY,$$
 for any finite dimensional $\bG_{(r)}$-module $M$.
\end{cor}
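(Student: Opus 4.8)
The plan is to obtain the statement as a direct consequence of Theorem~\ref{thm:thetas}(1) together with the elementary fact that the Jordan type of a $p$-nilpotent endomorphism of a finite-dimensional vector space over a field is unchanged under extension of the ground field.

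First I would rephrase Theorem~\ref{thm:thetas}(1) at the level of operators on modules. Write $\Phi_{\fg,r}^* : k[V_r(\bG)] \to k[\cC_r(\cN_p(\fg))]$ for the comorphism of $\Phi_{\fg,r}$. Theorem~\ref{thm:thetas}(1) says $\Theta_{\fg,r} = (\Phi_{\fg,r}^* \otimes 1)(\Theta_{\bG,r})$ in $k[\cC_r(\cN_p(\fg))] \otimes k\bG_{(r)}$. Since $\Phi_{\fg,r}^*$ is a map of $k$-algebras and the operator attached to $\Theta_{\bG,r}$ (resp. $\Theta_{\fg,r}$) acting on $M$ is given by the tensor-product pairing of the group algebra with $k[V_r(\bG)] \otimes M$ (resp. $k[\cC_r(\cN_p(\fg))] \otimes M$), it follows that under the identification $k[\cC_r(\cN_p(\fg))] \otimes M \cong k[\cC_r(\cN_p(\fg))] \otimes_{k[V_r(\bG)]} (k[V_r(\bG)] \otimes M)$ one has $\Theta_{\fg,r,M} = \id \otimes_{k[V_r(\bG)]} \Theta_{\bG,r,M}$; that is, $\Theta_{\fg,r,M}$ is the base change of $\Theta_{\bG,r,M}$ along $\Phi_{\fg,r}^*$.

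Next I would specialize. Since base change is transitive, restricting $\Theta_{\fg,r,M}$ along the geometric point $\chi_{\ul B}\colon \Spec k(\ul B) \to \cC_r(\cN_p(\fg))$ is the same as restricting $\Theta_{\bG,r,M}$ along the composite $\Phi_{\fg,r} \circ \chi_{\ul B}\colon \Spec k(\ul B) \to V_r(\bG)$. By definition $x = \Phi_{\fg,r}(\ul B)$, so this composite factors as $\Spec k(\ul B) \to \Spec k(x) \xrightarrow{\chi_x} V_r(\bG)$ with $k(x) \hookrightarrow k(\ul B)$ an extension of fields. Hence $k(\ul B) \otimes_{k[\cC_r(\cN_p(\fg))]} \Theta_{\fg,r,M}$ is the extension of scalars from $k(x)$ to $k(\ul B)$ of the operator $k(x) \otimes_{k[V_r(\bG)]} \Theta_{\bG,r,M}$ that computes $JT(\Theta_{\bG,r,M})(x)$ in Definition~\ref{defn:JT}. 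Because the number of Jordan blocks of each size of a $p$-nilpotent operator over a field is invariant under field extension, we conclude $JT(\Theta_{\fg,r,M})(\ul B) = JT(\Theta_{\bG,r,M})(x)$, which is the first displayed equality.

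Finally, for the reformulation over projective spaces I would invoke the well-definedness of these Jordan type functions on $\bP\cC_r(\cN_p(\fg))$ and $\bP V_r(\bG)$ — this is part of Theorem~\ref{thm:JT}(1), via Theorem~\ref{thm:global-univ} — together with the compatibility of $\Phi_{\fg,r}$ with the $\bA^1$-actions (Proposition~\ref{prop:action}(3), Lemma~\ref{lem:bicont}), so that $\Phi_{\fg,r}$ induces a map $\bP\cC_r(\cN_p(\fg)) \to \bP V_r(\bG)$ and the pointwise identity above becomes the asserted equality of functions on $\bP\cC_r(\cN_p(\fg))$. There is no genuine difficulty here: the one point that must not be overlooked is the field-extension invariance of Jordan type, which is needed precisely because the residue field $k(x)$ of the image point may be strictly smaller than $k(\ul B)$.
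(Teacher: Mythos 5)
Your proposal is correct and follows the same route as the paper, which presents this corollary as an immediate consequence of Theorem \ref{thm:thetas}(1); you have simply filled in the details the paper leaves implicit, namely the identification of $\Theta_{\fg,r,M}$ as the base change of $\Theta_{\bG,r,M}$ along $\Phi_{\fg,r}^*$ and the invariance of Jordan type under field extension. Both points are accurate and appropriately placed.
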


\vskip .1in

We denote by $\bP \cC_r(\cN_p(\fg))$ the weighted projective variety $\Proj k[\cC_r(\cN_p(\fg))]$, parallel
to the notation of $\bP V_r(\bG)$ for $\Proj k[V_r(\bG)]$.   We omit the proof of the following
elementary lemma.

\begin{lemma}
A geometric point of $\Spec K \to \bP \cC_r(\cN_p(\fg))$ corresponds to an equivalence 
class $\langle \ul B \rangle$ of
non-zero $r$-tuples $\ul B = (B_0,\ldots,B_{r-1})$ of $p$-nilpotent, pair-wise commuting elements 
of $K \otimes \fg$, where $(B_0,\ldots,B_{r-1})$ is equivalent to $(B_0^\prime,\ldots,B_{r-1}^\prime)$ if and only 
it there exists some $0 \not= \alpha \in K$ such that $B_i^\prime = \alpha^{p^i} B_i$ for each $i \ 0 \leq i < r$.

Under the bicontinuous homeomorphism $\Phi_{\fg,r}$ of Lemma \ref{lem:bicont}, the projection \\
$V_r(\bG) {\text -} \{0 \} \to \bP V_r(\fg)$ corresponds to 
$\cC_r(\cN_p(\fg)) {\text -} \{0 \} \to \bP\cC_r(\cN_p(\fg))$ sending $\ul B$ to $\langle \ul B \rangle$.
\end{lemma}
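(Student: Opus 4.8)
The plan is to prove the two assertions of this lemma in turn, both of which are essentially bookkeeping on top of the definitions and the already-established homeomorphism $\Phi_{\fg,r}$ of Lemma \ref{lem:bicont}.

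For the first assertion, I would start from Definition \ref{defn:cCr}, which presents $\cC_r(\fg)$ as the affine scheme representing the functor $A \mapsto \{(B_0,\ldots,B_{r-1}) : B_s^{[p]}=0,\ [B_s,B_{s'}]=0\}$, together with the monoid action $(\alpha;B_0,\ldots,B_{r-1}) \mapsto (\alpha B_0, \alpha^p B_1,\ldots,\alpha^{p^{r-1}}B_{r-1})$. Restricting to the $p$-nilpotent cone inside each slot gives $\cC_r(\cN_p(\fg))$, which is a cone (stable under the $\bA^1$-action, with a single fixed ``cone point'' $0 = (0,\ldots,0)$). A geometric point $\Spec K \to \bP\cC_r(\cN_p(\fg)) = \Proj k[\cC_r(\cN_p(\fg))]$ is by the standard description of $\Proj$ of a graded ring a line through the origin in $(\cC_r(\cN_p(\fg)))_K$, i.e., a $K$-point of $\cC_r(\cN_p(\fg)) - \{0\}$ modulo the $\bA^1$-action; unwinding the action shows precisely that $(B_0,\ldots,B_{r-1}) \sim (B_0',\ldots,B_{r-1}')$ iff there is a nonzero $\alpha \in K$ with $B_i' = \alpha^{p^i}B_i$. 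The only point worth being slightly careful about is that the grading on $k[\cC_r(\cN_p(\fg))]$ dual to this monoid action assigns the coordinate functions on the $s$-th slot degree $p^s$, so the $\Proj$ really does see the weighted action as stated; but this is exactly the content of the weighting convention in Definition \ref{defn:cCr} and Proposition \ref{prop:weighted}.

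For the second assertion, I would invoke Lemma \ref{lem:bicont}, which already tells us $\Phi_{\fg,r}$ is a homeomorphism commuting with the $\bA^1$-actions on $\cC_r(\cN_p(\fg))$ and $V_r(\bG)$. Compatibility with the $\bA^1$-actions means that $\Phi_{\fg,r}$ descends to (and is compatible with) the projections from the punctured cones to the projectivizations; concretely, the square with vertical maps $\Phi_{\fg,r}$ and $\bP\Phi_{\fg,r}$ and horizontal maps the two projections commutes. Since $\Phi_{\fg,r}$ carries the cone point to the cone point and is a bijection on the punctured cones, the induced map $\cC_r(\cN_p(\fg)) - \{0\} \to V_r(\bG) - \{0\}$ is exactly the restriction of $\Phi_{\fg,r}$, and it sits over $\bP\Phi_{\fg,r}$. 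Composing with the description of points of $\bP\cC_r(\cN_p(\fg))$ from the first assertion then says that, in these coordinates, the projection $\cC_r(\cN_p(\fg)) - \{0\} \to \bP\cC_r(\cN_p(\fg))$ is the map $\ul B \mapsto \langle \ul B\rangle$, matching under $\Phi_{\fg,r}$ the projection $V_r(\bG) - \{0\} \to \bP V_r(\bG)$ of (\ref{eqn:proj}).

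I do not anticipate a genuine obstacle here — the statement is labelled ``elementary'' in the excerpt for good reason — but if there is a delicate point it is making sure the grading/weighting conventions on $k[\cC_r(\cN_p(\fg))]$ line up with the monoid action used to define $\bP V_r(\bG)$ in Definition \ref{defn:grading}, so that the two projectivizations are compared by the \emph{same} recipe and $\Phi_{\fg,r}$ indeed commutes with the projections rather than with some Frobenius twist of them. That compatibility is already built into Definition \ref{defn:cCr} (the action is chosen with the $p^i$-th powers precisely so as to match the grading of Proposition \ref{prop:grading}) and into Lemma \ref{lem:bicont}, so once those are cited the verification is immediate. Given the routine nature of the argument, a one-line proof citing Definition \ref{defn:cCr}, the standard description of $\Proj$ of a graded ring, and Lemma \ref{lem:bicont} is appropriate, which is presumably why the authors chose to omit it.
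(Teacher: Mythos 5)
Your argument is correct and is precisely the proof the paper leaves implicit: the first assertion is the standard description of geometric points of $\Proj$ of the graded ring $k[\cC_r(\cN_p(\fg))]$ as orbits of the weighted $\mathbb{G}_m$-action of Definition \ref{defn:cCr} on the punctured cone (with lifting of a $K$-point of the weighted $\Proj$ to the cone being automatic for $K$ algebraically closed), and the second follows from the $\bA^1$-equivariance of $\Phi_{\fg,r}$ recorded in Proposition \ref{prop:action}(3) and Lemma \ref{lem:bicont}. The one convention you rightly flag --- that the action $B_i\mapsto \alpha^{p^i}B_i$ is chosen exactly so that the induced grading matches that of $k[V_r(\bG)]$ from Definition \ref{defn:grading} --- is indeed the only thing to check, and it holds by construction.
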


\vskip .1in

We next formulate the projectivization $\tilde\Theta_{\fg,r}$ of $\Theta_{\fg,r}$ and give 
the analogue of Theorem \ref{thm:global-univ} for $\tilde \Theta_{\fg,r}$.
As in Theorem \ref{thm:global-univ}, 
we retain the notation and 
conventions introduced prior to Proposition \ref{prop:weighted} relating graded modules
for a graded commutative ring $R$ and sheaves on $\Proj R$.

\vskip .1in

\begin{prop}
\label{prop:proj-fg}
Let $(\bG, \cE_{(r)})$  be an affine group scheme of exponential  type of height $r$.
The  operator  $\Theta_{\fg,r}$ of Theorem \ref{thm:thetas} determines a global section 
$$\tilde  \Theta_{\fg,r} \quad \in \quad  
\Gamma(\bP \cC_r(\cN_p(\fg)),\cO_{\bP\cC_r(\cN_p(\fg))}(p^{r-1})\otimes k\bG_{(r)})$$
of the coherent, locally free sheaf $\cO_{\bP\cC_r(\cN_p(\fg))}(p^{r-1})\otimes k\bG_{(r)}$ 
of $\cO_{\bP\cC_r(\cN_p(\fg))}$-modules which satisfies the following properties.
\begin{enumerate}
\item
For any $\bG_{(r)}$-module $M$, tensor product with $\tilde  \Theta_{\fg,r}$ determines the 
map of quasi-coherent $\cO_{\bP\cC_r(\cN_p(\fg))}$-modules
$$\tilde \Theta_{\fg,r,M}: (k[\cC_r(\cN_p(\fg))]\otimes M)^{\sim} \quad \to \quad 
(k[\cC_r(\cN_p(\fg))]\otimes M)^\sim(p^{r-1}).$$
\item
For any non-zero divisior $h \in  k[\cC_r(\cN_p(\fg))]_{p^{r-1}}$, let $U_h$ denote the affine open subset \
$Spec (k[\cC_r(\cN_p(\fg))][h^{-1}])_0 \ \subset \bP\cC_r(\cN_p(\fg))$. 
The natural  isomorphism of $(\cO_{\bP\cC_r(\cN_p(\fg))})_{U_h}$-modules
$$h^{-1} \otimes 1: (\cO_{\bP\cC_r(\cN_p(\fg))}(p^{r-1}) \otimes k\bG_{(r)})_{|U_h} \quad 
\stackrel{\sim}{\to} \quad  (\cO_{\bP\cC_r(\cN_p(\fg))} \otimes k\bG_{(r)})_{|U_h}$$
sends 
$$(\tilde \Theta_{\fg,r})_{|U_h} \ \in \ \Gamma(U_h,\cO_{\bP\cC_r(\cN_p(\fg))}(p^{r-1}) \otimes k\bG_{(r)}) \ = \
(k[\cC_r(\cN_p(\fg))][1/h])_{p^{r-1}} \otimes k\bG_{(r)} $$
to 
$$(h^{-1}\otimes 1)\cdot\Theta_{\fg,r} \ \in \ k[\cC_r(\cN_p(\fg))][1/h]\otimes k\bG_{(r)}.$$
\item
If $\ul B$ is a scheme theoretic point of $\cC_r(\cN_p(\fg))$ projecting to $\langle \ul B \rangle$ 
such that $h(\langle \ul B \rangle) \not= 0$, then the restriction 
along $\Spec k(\ul B) \to \bP\cC_r(\cN_p(\fg))$ of $\tilde  \Theta_{\fg,r}$ is given by 
\begin{equation}
\label{eqn:invert}
k(\ul B) \otimes_{\cO_{\bP\cC_r(\cN_p(\fg))}} \tilde\Theta_{\fg,r} \quad = 
\quad c\cdot \prod_{s=0}^{r-1} (\cE_{r|s,B_s})_*(1\otimes u_{r-s-1}) \ \in \ k(\ul B) \otimes k\bG_{(r)}
\end{equation}
for some $0 \not= c \in k(\ul B)$.
\end{enumerate}
\end{prop}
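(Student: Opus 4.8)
The plan is to follow the proof of Theorem~\ref{thm:global-univ} essentially verbatim, replacing $\cU_{\bG,r}$, $\Theta_{\bG,r}$ and $V_r(\bG)$ throughout by $\cU_{\fg,r}$, $\Theta_{\fg,r}$ and $\cC_r(\cN_p(\fg))$, and invoking Theorem~\ref{thm:thetas} wherever the original proof used Theorem~\ref{thm:theta}. First I would check that the formalism of Proposition~\ref{prop:weighted} applies to $\bP\cC_r(\cN_p(\fg))$: the $\bA^1$-action of Definition~\ref{defn:cCr} gives the coordinate functions on the $s$-th factor degree $p^s$, so $k[\cC_r(\cN_p(\fg))]$ is generated by homogeneous elements of degrees $p^0,\dots,p^{r-1}$, each dividing $p^{r-1}$; hence $\cO_{\bP\cC_r(\cN_p(\fg))}(p^{r-1})$ is coherent and locally free of rank one, and tensoring with the finite dimensional space $k\bG_{(r)}$ keeps it coherent and locally free. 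Since $\Theta_{\fg,r}$ is homogeneous of degree $p^{r-1}$ by Theorem~\ref{thm:thetas}(3), it lies in degree $0$ of the graded module $k[\cC_r(\cN_p(\fg))](p^{r-1})\otimes k\bG_{(r)}$ and therefore determines the claimed global section $\tilde\Theta_{\fg,r}$.

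Next I would dispatch assertions (1) and (2), which are formal. Assertion (1) follows by applying the (exact, grading-shift-compatible) functor $(-)^{\sim}$ from graded $k[\cC_r(\cN_p(\fg))]$-modules to quasi-coherent sheaves on $\bP\cC_r(\cN_p(\fg))$ to the $k[\cC_r(\cN_p(\fg))]$-linear, degree-$p^{r-1}$ operator $\Theta_{\fg,r,M}$ furnished by Theorem~\ref{thm:thetas}(4). Assertion (2) is exactly the argument of Theorem~\ref{thm:global-univ}(2): the principal open $U_h$ is $\Spec(k[\cC_r(\cN_p(\fg))][h^{-1}])_0$, on which $\cO(p^{r-1})$ is trivialized by the unit $h$, so $h^{-1}\otimes 1$ is the induced trivialization of $\cO(p^{r-1})\otimes k\bG_{(r)}$ over $U_h$ and by construction carries $(\tilde\Theta_{\fg,r})_{|U_h}$ to $(h^{-1}\otimes 1)\cdot\Theta_{\fg,r}$.

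For assertion (3) I would restrict $(\tilde\Theta_{\fg,r})_{|U_h}$ one step further, along the point $\Spec k(\ul B)\to U_h\subset\bP\cC_r(\cN_p(\fg))$ cut out by $\langle\ul B\rangle$. By (2) this restriction is the image of $(h^{-1}\otimes 1)\cdot\Theta_{\fg,r}$ under specialization at $\langle\ul B\rangle$; fixing the representative $\ul B=(B_0,\dots,B_{r-1})$, the function $h$ specializes to the nonzero scalar $h(\ul B)\in k(\ul B)$, while by Theorem~\ref{thm:thetas}(2),(5) the operator $\Theta_{\fg,r}$ specializes at $\ul B$ to $\prod_{s=0}^{r-1}(\cE_{r|s,B_s})_*(1\otimes u_{r-s-1})$. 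Taking $c=h(\ul B)^{-1}\neq 0$ then gives~(\ref{eqn:invert}). Alternatively, (\ref{eqn:invert}) can be obtained by pulling Theorem~\ref{thm:global-univ}(3) back along the $\bA^1$-equivariant homeomorphism $\Phi_{\fg,r}$ of Lemma~\ref{lem:bicont}, using $\Theta_{\fg,r}=(\Phi^*_{\fg,r}\otimes 1)(\Theta_{\bG,r})$ from Theorem~\ref{thm:thetas}(1) together with the specialization formula of Theorem~\ref{thm:thetas}(2).

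I expect the only genuine subtlety to be the bookkeeping around the scalar $c$: because $\bP\cC_r(\cN_p(\fg))$ is a weighted projective space, the geometric point $\langle\ul B\rangle$ determines the tuple $\ul B$ only up to the scaling of Definition~\ref{defn:cCr}, so the fiber of $\cO(p^{r-1})\otimes k\bG_{(r)}$ at $\langle\ul B\rangle$ is canonically only a line tensored with $k\bG_{(r)}$; a choice of representative $\ul B$ (equivalently, of the local trivialization $h^{-1}$) is what pins $c$ down, and nothing sharper is being asserted. Everything else is routine transcription of the $\Proj$-module dictionary already used for $\tilde\Theta_{\bG,r}$ in Theorem~\ref{thm:global-univ}.
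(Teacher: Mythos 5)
Your proposal is correct and follows essentially the same route as the paper: Proposition \ref{prop:weighted} for the local freeness of $\cO_{\bP\cC_r(\cN_p(\fg))}(p^{r-1})\otimes k\bG_{(r)}$, homogeneity of $\Theta_{\fg,r}$ in degree $p^{r-1}$ to produce the global section, and then a transcription of the proof of Theorem \ref{thm:global-univ} for assertions (1)--(3), with Theorem \ref{thm:thetas}(2) supplying the product formula in the specialization. Your direct appeal to Theorem \ref{thm:thetas}(3) for homogeneity (rather than re-deriving it from Proposition \ref{prop:action}(3) and Theorem \ref{thm:theta}(5) as the paper does) and your explicit identification $c=h(\ul B)^{-1}$ are harmless streamlinings of the same argument.
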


\begin{proof}
By Proposition \ref{prop:weighted}, $\cO_{\bP\cC_r(\cN_p(\fg))}(p^{r-1})\otimes k\bG_{(r)}$ 
is a coherent, locally free
sheaf of $\cO_{\bP\cC_r(\cN_p(\fg))}$-modules on $\bP \cC_r$.
Proposition \ref{prop:action}(3) implies that $(\Phi_{r|s})^*: k[V_r(\bG)] \to k[\cC_r(\cN_p(\fg))]$ is a 
map of graded $k$-algebras.   Thus, Theorem 
\ref{thm:theta}(5) and Theorem \ref{thm:thetas}(1) tell us that $\Theta_{\fg,r} \in k[\cC_r(\cN_p(\fg))] \otimes k\bG_{(r)}$ 
is homogenous of degree $p^{r-1}$. Therefore, $\Theta_{\fg,r}$ determines 
$\tilde \Theta_{\fg,r}$ in 
 $\Gamma(\cO_{\bP\cC_r(\cN_p(\fg))},\cO_{\bP\cC_r(\cN_p(\fg))}(p^{r-1})\otimes k\bG_{(r)})$.

The proofs of assertions (1), (2), and (3) are verified by evident analogues for 
$\tilde \Theta_{\fg,r,}$ of the proofs given in 
the proof of Theorem \ref{thm:global-univ}
for the corresponding statements for $\tilde \Theta_{\bG,r,}$
\end{proof}

\vskip .1in

We proceed to simplify $\Theta_{\fg,r}$ by replacing the product in (\ref{eqn:invert}) by a sum.

\begin{defn}
\label{defn:simplify}
Let $(\bG, \cE_{(r)})$ be an affine group scheme of \it exponential type of height $r$.  We define
$$\Theta^{exp}_{\fg,r} \ \equiv \ \sum_{s=0}^{r-1} \Theta_{\fg,r|s} \quad \in \quad  
k[\cC_r(\cN_p(\fg))]\otimes k\bG_{(r)}.$$
Thus, for each $\ul B \in \cC_r(\cN_p(\fg))$
 $$ \Theta^{exp}_{\fg,r,\ul B} \ 
 \equiv \ \sum_{s=0}^{r-1} \Theta_{\fg,r|s,\ul B} \ = \ \sum_{s=0}^{r-1} (\cE_{r|s,B_s})_*(1\otimes u_{r-s-1})
 \quad \in \quad k(\ul B) \otimes k\bG_{(r)}.$$
\end{defn}

\vskip .1in

The justification of replacing $\Theta_{\fg,r}$ by  $\Theta_{\fg,r}^{exp}$ is implicit in the following
proposition, a reformulation of \cite[Prop 4.3]{F15} (following a result of Sobaje \cite[Prop 2.3]{Sob}).

\begin{prop}
\label{prop:linear}
 Let $(\bG, \cE_{(r)})$ be an affine group scheme of \it exponential type of height $r$,
 let $M$ be a finite dimensional $\bG_{(r)}$-module, and let $\ul a = \sum_{i=1}^p a_i [i]$
 be a Jordan type which is maximal among the set of Jordan types $\{ JT(\Theta_{\fg,r,\ul B}, k(\ul B)\otimes M); \ 
 \ul B \in \cC_r(\bG) \}$ for $M$.
 For any scheme-theoretic point $\ul B \in \cC_r(\cN_p(\fg))$, 
$$JT(\Theta^{exp}_{\fg,r,\ul B}, k(\ul B)\otimes M) \ = \ \ul a \quad
 {\text if \ and \ only \ if} \quad  JT(\Theta_{\fg,r,\ul B}, k(\ul B)\otimes M)
 \ = \ \ul a.$$
\end{prop}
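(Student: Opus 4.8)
The plan is to recognize this statement, under the identifications built up in this section, as \cite[Prop.~4.3]{F15} (itself resting on \cite[Thm.~1.12]{FPS} and \cite[Prop.~2.3]{Sob}); the real work is the translation into that setting and, within it, one comparison of Jordan types. First I would pass from the universal operators to honest $p$-nilpotent endomorphisms of the finite-dimensional space $k(\ul B)\otimes M$: by Theorem~\ref{thm:thetas}(5) the base change of $\Theta_{\fg,r,M}$ (resp.\ of $\Theta^{exp}_{\fg,r,M}$) along $\ul B$ is the action of $\Theta_{\fg,r,\ul B}$ (resp.\ of $\Theta^{exp}_{\fg,r,\ul B}=\sum_{s=0}^{r-1}\Theta_{\fg,r|s,\ul B}$) on $k(\ul B)\otimes M$, so the two conditions of the proposition are literally conditions on the Jordan types of these two operators. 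By Proposition~\ref{prop:specialize}, in particular (\ref{eqn:special}), together with Theorem~\ref{thm:thetas}(2), $\Theta_{\fg,r,\ul B}$ is the image of $1\otimes u_{r-1}$ under the genuine height-$r$ $1$-parameter subgroup $\cU_{\fg,r,\ul B}=\prod_{s=0}^{r-1}\cU_{\fg,r|s,B_s}\circ F^s$, so by Corollary~\ref{cor:aff-supp} and Lemma~\ref{lem:bicont} the Jordan type $\ul a$ is a maximal value of $JT_{\bG,r,M}(-)$ on $V_r(\bG)$. In these terms $\Theta^{exp}_{\fg,r,\ul B}$ is the ``linearization'' of $\Theta_{\fg,r,\ul B}$: it is the part of $\Theta_{\fg,r,\ul B}$ linear in the generators of the augmentation ideal $I$ of $k(\ul B)\bG_{(r)}$, the remaining terms of $\Theta_{\fg,r,\ul B}$ lying in $I^{2}$ (cf.\ Example~\ref{ex:Theta} and \cite[Eqn.~6.5.1]{SFB2}). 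This is precisely the pair of operators compared in \cite[Prop.~4.3]{F15}.

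The implication ``$JT(\Theta^{exp}_{\fg,r,\ul B})=\ul a\Rightarrow JT(\Theta_{\fg,r,\ul B})=\ul a$'' I would deduce from the pointwise comparison
$$JT\big(\Theta^{exp}_{\fg,r,\ul B},\,k(\ul B)\otimes M\big)\ \leq\ JT\big(\Theta_{\fg,r,\ul B},\,k(\ul B)\otimes M\big),\qquad\text{valid for every }\ul B,$$
together with the fact that $JT(\Theta_{\fg,r,\ul B})\leq\ul a$ by the maximality of $\ul a$: these two force equality throughout as soon as $JT(\Theta^{exp}_{\fg,r,\ul B})=\ul a$. The displayed inequality should come from degenerating $\Theta_{\fg,r,\ul B}$ onto $\Theta^{exp}_{\fg,r,\ul B}$ — using the ordered-product description of $\Theta_{\fg,r,\ul B}$ in (\ref{eqn:special}), the $\bA^1$-action of Definition~\ref{defn:cCr}, and the homogeneity of Theorem~\ref{thm:thetas}(3) to build a family of $p$-nilpotent operators over a line $\Spec k(\ul B)[\lambda]$ with fibre $\Theta_{\fg,r,\ul B}$ at $\lambda=1$ and fibre $\Theta^{exp}_{\fg,r,\ul B}$ at $\lambda=0$ (this is the operator-level shadow of the $\bA^1$-homotopy used in Section~\ref{sec:bundles}) — followed by lower semicontinuity of Jordan type, Proposition~\ref{prop:specializeM}, applied to the local ring of this line at the origin; Proposition~\ref{prop:rank} turns the resulting rank inequalities into the Jordan-type inequality above.

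The reverse implication ``$JT(\Theta_{\fg,r,\ul B})=\ul a\Rightarrow JT(\Theta^{exp}_{\fg,r,\ul B})=\ul a$'' is the substantive one; here I would invoke \cite[Prop.~4.3]{F15}. Its argument runs: \cite[Thm.~1.12]{FPS} gives that $\Theta_{\fg,r}$ and $\Theta^{exp}_{\fg,r}$ have the same generic Jordan type on each irreducible component of $\cC_r(\cN_p(\fg))$ (so $\ul a$ is also a maximal value of $JT(\Theta^{exp}_{\fg,r,-})$, attained on a dense open subset of the relevant components); then Sobaje's argument \cite[Prop.~2.3]{Sob} promotes this from the generic locus to every point $\ul B$ at which $\Theta_{\fg,r}$ attains $\ul a$, by observing that the discrepancy $\Theta_{\fg,r,\ul B}-\Theta^{exp}_{\fg,r,\ul B}\in I^{2}$ cannot lower the rank of any power of the operator once those ranks are maximal, whence $\rk\big((\Theta^{exp}_{\fg,r,\ul B})^{j}\big)\geq\rk\big((\Theta_{\fg,r,\ul B})^{j}\big)$ for all $j$; together with the inequality of the previous paragraph and Proposition~\ref{prop:rank} this gives $JT(\Theta^{exp}_{\fg,r,\ul B})=\ul a$.

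The step I anticipate to be the main obstacle is the construction underlying the comparison inequality of the second paragraph: exhibiting a single algebraic family over $\bA^1$ whose special fibre is exactly $\sum_{s}\Theta_{\fg,r|s,\ul B}$ of Definition~\ref{defn:simplify} requires making precise how the ordered product of the layer maps in (\ref{eqn:special}) collapses, after the correct simultaneous rescaling of the individual layers $B_s$, to the sum of their linear parts — the $B_s$ enter with different Frobenius twists, so the rescaling is not the naive one. A secondary, purely bookkeeping obstacle is checking that the operator $\Theta^{exp}_{\fg,r}$ of Definition~\ref{defn:simplify} coincides with the linearization appearing in \cite[Prop.~4.3]{F15} rather than some further-truncated variant.
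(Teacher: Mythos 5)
Your overall reduction --- Sobaje's expansion of $\Theta_{\fg,r,\ul B}$ as $\Theta^{exp}_{\fg,r,\ul B}$ plus scalar multiples of products of at least two commuting $p$-nilpotent elements, followed by an appeal to \cite[Thm 1.12]{FPS} --- is exactly the paper's route, and your first and third paragraphs are essentially the intended argument. The genuine gap is in your second paragraph. First, the degeneration does not prove the inequality you want: Proposition \ref{prop:specializeM} applied to the local ring of the $\lambda$-line at the origin compares the fibre at $\lambda=0$ with the \emph{generic} fibre of your family, not with the fibre at $\lambda=1$; to transfer the conclusion to $\lambda=1$ you would need all fibres over $\lambda\neq 0$ to be conjugate, and no rescaling arranges this, because $\Theta_{\fg,r,\ul B}$ and $\Theta^{exp}_{\fg,r,\ul B}$ are \emph{both} homogeneous of the same weighted degree $p^{r-1}$ (Theorem \ref{thm:thetas}(3)), so the $\bA^1$-action of Definition \ref{defn:cCr} rescales the linear and higher-order parts identically and cannot separate them. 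Second, the inequality $JT(\Theta^{exp}_{\fg,r,\ul B},k(\ul B)\otimes M)\leq JT(\Theta_{\fg,r,\ul B},k(\ul B)\otimes M)$ is false in the generality in which your argument would establish it: for $p\geq 3$ let $u\mapsto E_{13}$ and $v\mapsto E_{12}+E_{23}$ be the resulting pair of commuting $p$-nilpotent operators on $k^3$, and compare $\alpha=u$ with $\beta=u-v\cdot v$, a perturbation of $\alpha$ by a scalar multiple of a product of two commuting $p$-nilpotent elements of the augmentation ideal; then $\beta$ acts as $0$, so $JT(\alpha)=[2]+[1]$ while $JT(\beta)=3[1]<[2]+[1]$. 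Thus higher-order perturbations can strictly \emph{lower} the Jordan type at non-maximal points (this is exactly the representative-dependence noted in Remark \ref{rem:pi-points}), and no pointwise inequality in either direction is available.

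The repair is to use \cite[Thm 1.12]{FPS} symmetrically for both implications, which is what the paper does: for two operators differing by terms of degree at least two in commuting $p$-nilpotent elements, one attains the maximal Jordan type on $M$ if and only if the other does, and in that case the two Jordan types coincide; in particular the maxima of $JT(\Theta_{\fg,r,-},M)$ and of $JT(\Theta^{exp}_{\fg,r,-},M)$ agree (both equal the common generic value), so neither direction requires a separate comparison inequality. Your third paragraph already contains this for one implication; the same citation disposes of the other, and the only input beyond it is the expansion from \cite[Prop 2.3]{Sob} that you correctly identified.
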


\begin{proof}
In \cite[Prop 2.3]{Sob}, Sobaje regards $\Theta_{\fg,r,\ul B} \ = \prod_{s=0}^{r-1} \Theta_{\fg,r|s,\ul B}$
as the composition
$$(\prod_{s=0}^{r-1} \circ (\otimes_{s=0}^{r-1} (\cE_{r|s,B_s})_*) \circ (1\otimes \Delta): 
\ k(\ul B) \otimes k\bG_{a(r)} \ \to \ k(\ul B) \otimes k\bG_{a(r)}^{\otimes r}  $$
$$ \quad \to \ k(\ul B) \otimes k\bG_{(r)}^{\otimes r} \to k(\ul B) \otimes k\bG_{(r)}$$
applied to $1\otimes u_{r-1} \in k(\ul B) \otimes k\bG_{a(r)}$.  Using this composition, 
he verifies that  $\Theta_{\fg,r,\ul B}$ equals $\sum_{s=0}^{r-1} \Theta_{\fg,r|s,\ul B}$ plus
other terms which are scalar multiples of products of at least two factors of this sum,
factors which are $p$-nilpotent and pair-wise commuting.  The proposition then follows from
\cite[Thm 1.12]{FPS}  (see also \cite[Prop 8]{CLN} and  \cite[Lem 6.4]{SFB2}).
\end{proof}

\vskip .1in

As shown in Theorem \ref{thm:gen-strata} and remarked in Remark \ref{rem:generalized}, the
support variety and its generalizations
for a finite dimensional $\bG_{(r)}$-module $M$ are determined by $JT_{\bG,r,M}(-)$
and thus by $JT_{\fg,r,M}(-)$.  (See Corollary \ref{cor:aff-supp}.)  

Since these support varieties are formulated in terms of maximality of Jordan types, 
Proposition \ref{prop:linear} has the following corollary.

\vskip .1in

\begin {cor}
\label{cor:gen-supp}
 Let $(\bG, \cE_{(r)})$ be an affine group scheme of \it exponential type of height $r$ and
 let $M$ be a finite dimensional $\bG_{(r)}$-module.  Then the support variety and 
 generalized support varieties of $M$  interpreted as closed subvarieties of $\cC_r(\fg)$ using
 $JT_{\bG,r,M}(-)$ are left unchanged by replacing  $JT_{\bG,r,M}(-)$ by  $JT^{exp}_{\bG,r,M}(-)$.
 \end{cor}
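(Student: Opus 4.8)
The plan is to transport the three families of closed subvarieties at issue --- the support variety $\bP V_r(\bG)_M$, the non-maximal support variety $\Gamma(\bG_{(r)})_M$, and the generalized support varieties $\Gamma^j(\bG)_M$ --- from $\bP V_r(\bG)$ to $\bP\cC_r(\cN_p(\fg))$ along the homeomorphism $\bP\Phi_{\fg,r}$ of Lemma \ref{lem:bicont}, and then to exhibit each of them as the complement of a ``maximal stratum'' to which Proposition \ref{prop:linear} applies. First I would invoke Corollary \ref{cor:aff-supp}, which identifies $JT_{\bG,r,M}(-)\circ\Phi_{\fg,r}$ with $JT(\Theta_{\fg,r,M})(-)$; thus the transported subvarieties are exactly those built from the function $JT(\Theta_{\fg,r,M})(-)\colon\bP\cC_r(\cN_p(\fg))\to\cY$ in the manner of Theorem \ref{thm:gen-strata} and Remark \ref{rem:generalized}, and the corollary becomes the assertion that replacing $JT(\Theta_{\fg,r,M})(-)$ by $JT(\Theta^{exp}_{\fg,r,M})(-)=JT^{exp}_{\fg,r,M}(-)$ leaves these subvarieties unchanged.

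Next I would record, following Theorem \ref{thm:gen-strata} and Remark \ref{rem:generalized}, that each of these subvarieties is the complement of the locus on which a Jordan type invariant attains its maximum over $\bP\cC_r(\cN_p(\fg))$: for $\Gamma(\bG_{(r)})_M$ the invariant is $JT(\Theta_{\fg,r,M})(-)$ itself; for $\bP V_r(\bG)_M$ it is again $JT(\Theta_{\fg,r,M})(-)$, whose maximal value is $\tfrac{m}{p}[p]$ (with $m=\dim M$) precisely when $M$ restricts freely at a generic point (and otherwise the locus is empty, the support variety being all of $\bP\cC_r(\cN_p(\fg))$); and for $\Gamma^j(\bG)_M$ it is the Jordan type of the $j$-th power of the universal operator. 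Proposition \ref{prop:linear}, applied with $\ul a$ the maximal Jordan type of $\Theta_{\fg,r,M}$, states exactly that the locus where $JT(\Theta^{exp}_{\fg,r,M})(-)$ equals $\ul a$ coincides with the locus where $JT(\Theta_{\fg,r,M})(-)$ equals $\ul a$; since this locus contains the generic points of $\bP\cC_r(\cN_p(\fg))$, the lower semicontinuity of Jordan type (Proposition \ref{prop:specializeM}) forces $\ul a$ to be the maximum of $JT(\Theta^{exp}_{\fg,r,M})(-)$ as well, so the two maximal strata coincide. This already identifies $\bP V_r(\bG)_M$ and $\Gamma(\bG_{(r)})_M$ in terms of either operator, using that $\ul a=\tfrac{m}{p}[p]$ for $\Theta_{\fg,r,M}$ if and only if for $\Theta^{exp}_{\fg,r,M}$.

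For the generalized support varieties $\Gamma^j(\bG)_M$ I would rerun this argument with $\Theta_{\fg,r}$ and $\Theta^{exp}_{\fg,r}$ replaced by their $j$-th powers: on the common maximal stratum the two operators act on $M$ with the same Jordan type $\ul a$, hence so do their $j$-th powers, and therefore the maximal value of ``Jordan type of the $j$-th power'' is $\ul a^j$ for either operator. The hard part will be the remaining point, namely that this value is attained on the same locus for both operators. Proposition \ref{prop:linear} as stated compares only maximal Jordan types, whereas the locus where $(\Theta_{\fg,r,M})^j$ has maximal rank may be strictly larger than the maximal stratum of $JT(\Theta_{\fg,r,M})(-)$, so I would need the $j$-th power strengthening of that comparison; this is available by applying the reasoning behind Proposition \ref{prop:linear} --- that is, \cite[Thm 1.12]{FPS} together with \cite[Lem 6.4]{SFB2} --- to the pairwise-commuting $p$-nilpotent operators $\Theta_{\fg,r|s,\ul B}$ and their $j$-th powers, via Sobaje's expansion of $\Theta_{\fg,r,\ul B}$ as $\Theta^{exp}_{\fg,r,\ul B}$ plus scalar multiples of products of at least two such operators (as in the proof of Proposition \ref{prop:linear}). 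Taking complements then yields the corollary.
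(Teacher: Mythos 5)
Your proposal follows the same route as the paper's: transport the subvarieties along the homeomorphism of Lemma \ref{lem:bicont} via Corollary \ref{cor:aff-supp}, recognize each as the complement of a locus of maximality, and apply Proposition \ref{prop:linear}; for the support variety $\bP V_r(\bG)_M$ and the non-maximal support variety $\Gamma(\bG_{(r)})_M$ your argument and the paper's (which consists entirely of the sentence preceding the corollary, asserting that these varieties are formulated in terms of maximality of Jordan types) coincide. Where you add something substantive is the generalized support varieties $\Gamma^j(\bG)_M$ of Remark \ref{rem:generalized}: you correctly observe that these are cut out by maximality of the $j$-rank rather than of the full Jordan type, that the maximal-$j$-rank locus can strictly contain the maximal-Jordan-type stratum, and hence that Proposition \ref{prop:linear} as literally stated does not formally yield the claim for $\Gamma^j$. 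The paper elides this point. Your proposed repair --- expanding $\Theta_{\fg,r,\ul B}^j-(\Theta^{exp}_{\fg,r,\ul B})^j$ as a sum of products of strictly more of the pairwise-commuting $p$-nilpotent factors $\Theta_{\fg,r|s,\ul B}$ and running the maximality argument of \cite{FPS} on $j$-ranks --- is the right idea; note that the $j$-rank analogue of \cite[Thm 1.12]{FPS} that this requires is essentially the invariance-of-maximal-$j$-rank result established in \cite{FP2} to make $\Gamma^j$ well defined on $\pi$-points, so you could cite that directly rather than re-derive it.
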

 
\vskip .1in

\begin{remark}
\label{remark:leq2}
By definition,  $\Theta^{exp}_{\fg,1} \ = \ \Theta_{\fg,1}$.  As observed in \cite[Prop 2.3]{Sob} (see also the proof
of \cite[Prop 6.5]{SFB2}), $\Theta^{exp}_{\fg,2} \ = \ \Theta_{\fg,2}$.  On the other hand, for $r > 2$,
$\Theta^{exp}_{\fg,r} \ \not= \ \Theta_{\fg,r}$.
\end{remark}

\vskip .1in

We next give the evident definition of the ``exponential Jordan type function" of a $\bG_{(r)}$-module, parallel to the 
definition of $JT_{\bG,r,M}(-)$ given in Definition \ref{defn:JT}.

\begin{defn}
\label{defn:local-exp}
 Let $(\bG, \cE_{(r)})$ be an affine group scheme of exponential type of height $r$ and let $M$ 
 be a finite dimensional $\bG_{(r)}$-module.  We define
 $$JT^{exp}_{\fg,r,M}(-):  \cC_r(\cN_p(\fg)) \quad \to \quad \cY$$
 by sending a geometric point $\ul B: \Spec k(\ul B) \to \cC_r(\cN_p(\fg))$ to \  
 \begin{equation}
 \label{eqn:formula-exp}
 JT(\Theta^{exp}_{\fg,r,M})(\ul B) \quad = 
 \quad JT(\sum_{s=0}^{r-1} (\cE_{r|s,B_s})_*(1\otimes u_{r-s-1}),k(\ul B)\otimes M))\ ,
 \end{equation}
 the Jordan type of the operator $\Theta^{exp}_{\fg,r,\ul B}$ on $k(\ul B)\otimes M$.
 
 Observe that the operator $(\cE_{r|s,B_s})_*(1\otimes u_{r-s-1})$ on $k(\ul B)\otimes M$ can be identified
 with the operator $1\otimes u_{r-s-1} \in k(\ul B)\otimes k\bG_{a(r)}$ on $(\cE_{r|s,B_s})^*(k(\ul B)\otimes M)$.
 \end{defn}

\vskip .1in

We next formulate the projectivization $\tilde\Theta_{\fg,r}^{exp}$ of $\Theta_{\fg,r}^{exp}$ and give 
the analogues of Theorem \ref{thm:global-univ} for $\tilde \Theta_{\bG,r}$ and 
Proposition \ref{prop:proj-fg} for $\tilde \Theta_{\fg,r}$.

\vskip .1in

\begin{prop}
\label{prop:proj-exp}
Let $(\bG, \cE_{(r)})$  be an affine group scheme of exponential  type of height $r$.
The  operator  $\Theta_{\fg,r}^{exp}$ of Definition \ref{defn:simplify} determines a global section 
$$\tilde  \Theta_{\fg,r}^{exp} \quad \in \quad  \Gamma(\cO_{\bP\cC_r(\cN_p(\fg))},
\cO_{\bP\cC_r(\cN_p(\fg))}(p^{r-1})\otimes k\bG_{(r)})$$
of the coherent, locally free sheaf $\cO_{\bP\cC_r(\cN_p(\fg))}(p^{r-1})\otimes k\bG_{(r)}$ 
of $\cO_{\bP\cC_r(\cN_p(\fg))}$-modules which
satisfies the following properties.
\begin{enumerate}
\item
For any $\bG_{(r)}$-module $M$, tensor product with $\tilde  \Theta_{\fg,r}^{exp}$ determines the 
map of quasi-coherent $\cO_{\bP\cC_r(\cN_p(\fg))}$-modules
$$\tilde \Theta_{\fg,r,M}^{exp}: (k[\cC_r(\cN_p(\fg))]\otimes M)^{\sim} \quad \to 
\quad (k[\cC_r(\cN_p(\fg))]\otimes M)^\sim(p^{r-1}).$$
\item
For any non-zero divisior $h \in  k[\cC_r(\cN_p(\fg))]_{p^{r-1}}$, let $U_h$ denote the affine open subset \
$Spec (k[\cC_r(\cN_p(\fg))][h^{-1}])_0 \ \subset \bP \cC_r(\fg)$. 
The natural  isomorphism of $(\cO_{\bP\cC_r(\cN_p(\fg))})_{U_h}$-modules
$$h^{-1} \otimes 1: (\cO_{\bP\cC_r(\cN_p(\fg))}(p^{r-1}) \otimes k\bG_{(r)})_{|U_h} \quad 
\stackrel{\sim}{\to} \quad  (\cO_{\bP\cC_r(\cN_p(\fg))} \otimes k\bG_{(r)})_{|U_h}$$
sends 
$$(\tilde \Theta_{\fg,r}^{exp})_{|U_h} \ \in \ 
\Gamma(U_h,\cO_{\bP\cC_r(\cN_p(\fg))}(p^{r-1}) \otimes k\bG_{(r)}) \ = \
(k[\cC_r(\cN_p(\fg))][1/h])_{p^{r-1}} \otimes k\bG_{(r)} $$
to 
$$(h^{-1}\otimes 1)\cdot\Theta_{\fg,r}^{exp} \ \in \ k[\cC_r(\cN_p(\fg))][1/h]\otimes k\bG_{(r)}.$$
\item
If $\ul B$ is a scheme theoretic point of $\cC_r(\cN_p(\fg))$ projecting to $\langle \ul B \rangle$ 
such that $h(\langle \ul B \rangle) \not= 0$, then the restriction 
along $\Spec k(\ul B) \to \bP\cC_r(\cN_p(\fg))$ of $\tilde  \Theta_{\fg,r}^{exp}$ is given by 
\begin{equation}
\label{eqn:invert-exp}
k(\ul B) \otimes_{\cO_{\bP\cC_r(\cN_p(\fg))}} \tilde\Theta_{\fg,r}^{exp} \quad = 
\quad c\cdot \sum_{s=0}^{r-1} (\cE_{(r),B_s})_*(1\otimes u_{r-s-1}) \ \in \ k(\ul B) \otimes k\bG_{(r)}
\end{equation}
for some $0 \not= c \in k(\ul B)$.
\item
For any $\bG_{(r)}$-module $M$, the restriction of $\tilde \Theta_{\fg,r,M}^{exp}$ along 
$\Spec k(\ul B) \to \bP\cC_r(\cN_p(\fg))$
as in (3) is a non-zero scalar multiple of the image of \\
$\sum_{s=0}^{r-1} (\cE_{r|s,B_s})_*(1\otimes u_{r-s-1})$ in $End_{k(\ul B)}(k(\ul B) \otimes M)$.
\end{enumerate}
\end{prop}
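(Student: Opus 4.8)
The plan is to follow the template of the proofs of Theorem~\ref{thm:global-univ} (for $\tilde\Theta_{\bG,r}$) and Proposition~\ref{prop:proj-fg} (for $\tilde\Theta_{\fg,r}$); the only point needing a separate argument is that the linearization $\Theta_{\fg,r}^{exp}$ stays homogeneous of degree $p^{r-1}$. First I would record that by Theorem~\ref{thm:thetas}(3) each summand $\Theta_{\fg,r|s}$ of $\Theta_{\fg,r}^{exp} = \sum_{s=0}^{r-1}\Theta_{\fg,r|s}$ is homogeneous of degree $p^{r-1}$ in the graded algebra $k[\cC_r(\cN_p(\fg))]$; since a sum of homogeneous elements of a common degree is again homogeneous of that degree, $\Theta_{\fg,r}^{exp} \in k[\cC_r(\cN_p(\fg))]\otimes k\bG_{(r)}$ is homogeneous of degree $p^{r-1}$. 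The algebra $k[\cC_r(\cN_p(\fg))]$ is generated by homogeneous elements of degrees $p^i$, $0\le i<r$, with respect to the grading of Definition~\ref{defn:cCr}, so Proposition~\ref{prop:weighted} applies: $\cO_{\bP\cC_r(\cN_p(\fg))}(p^{r-1})\otimes k\bG_{(r)}$ is coherent and locally free, and the degree-$p^{r-1}$ element $\Theta_{\fg,r}^{exp}$ of the shifted graded module $k[\cC_r(\cN_p(\fg))](p^{r-1})\otimes k\bG_{(r)}$ determines a global section $\tilde\Theta_{\fg,r}^{exp}$.

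For assertion (1) I would pass from the $k[\cC_r(\cN_p(\fg))]$-linear operator $\Theta_{\fg,r,M}^{exp}$ obtained by tensoring with $\Theta_{\fg,r}^{exp}$ (which raises degrees by $p^{r-1}$, and which is $p$-nilpotent, being a sum of pairwise commuting $p$-nilpotents exactly as in the proof of Proposition~\ref{prop:linear}) to the associated map of sheaves via the exact functor $(-)^\sim$, just as in the proof of Theorem~\ref{thm:global-univ}(1). Assertion (2) is purely formal: it is the standard description of the principal open $U_h \subset \Proj k[\cC_r(\cN_p(\fg))]$ attached to a homogeneous nonzerodivisor $h$ of degree $p^{r-1}$ as $\Spec(k[\cC_r(\cN_p(\fg))][h^{-1}])_0$, together with the multiplication-by-$h^{-1}$ trivialization of $\cO(p^{r-1})$ over $U_h$; I would transcribe the argument for Theorem~\ref{thm:global-univ}(2), replacing $k[V_r(\bG)]$ by $k[\cC_r(\cN_p(\fg))]$ and $\Theta_{\bG,r}$ by $\Theta_{\fg,r}^{exp}$.

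For assertion (3), over $U_h$ the section $\tilde\Theta_{\fg,r}^{exp}$ is carried by the isomorphism of (2) to $(h^{-1}\otimes 1)\cdot\Theta_{\fg,r}^{exp}$; specializing at $\ul B$ and using the identification $\Theta_{\fg,r|s,\ul B} = (\cE_{r|s,B_s})_*(1\otimes u_{r-s-1})$ of Theorem~\ref{thm:thetas}(2) yields $c\cdot\sum_{s=0}^{r-1}(\cE_{r|s,B_s})_*(1\otimes u_{r-s-1})$ for some nonzero $c\in k(\ul B)$, exactly as assertion (3) of Proposition~\ref{prop:proj-fg} is obtained. Finally, assertion (4) follows immediately by tensoring the restriction in (3) with $M$ and comparing with the operator of (1), just as Theorem~\ref{thm:global-univ}(4) is deduced from Theorem~\ref{thm:global-univ}(3). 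The only step with genuine content is the homogeneity claim of the first paragraph; everything else is a direct transcription of the two preceding proofs, so I do not expect a real obstacle.
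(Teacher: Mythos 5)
Your proposal is correct and follows essentially the same route as the paper: establish homogeneity of degree $p^{r-1}$, invoke Proposition \ref{prop:weighted} for local freeness of $\cO_{\bP\cC_r(\cN_p(\fg))}(p^{r-1})\otimes k\bG_{(r)}$, and transcribe the proof of Theorem \ref{thm:global-univ} for assertions (1)--(4). Your derivation of homogeneity by summing the summands $\Theta_{\fg,r|s}$, each homogeneous by Theorem \ref{thm:thetas}(3), is in fact a slightly cleaner citation than the paper's (which routes through Theorem \ref{thm:thetas}(1), a statement about $\Theta_{\fg,r}$ rather than $\Theta_{\fg,r}^{exp}$), but the underlying argument is identical.
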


\begin{proof}
By Proposition \ref{prop:weighted}, $\cO_{\bP\cC_r(\cN_p(\fg))}(p^{r-1})\otimes k\bG_{(r)}$ 
is a coherent, locally free
sheaf of $\cO_{\bP \cC_r(\cN_p(\fg))}$-modules on $\bP\cC_r(\cN_p(\fg))$.
Proposition \ref{prop:action}(3) implies that $(\Phi_{r|s})^*: k[V_r(\bG)] \to k[\cC_r(\cN_p(\fg))]$ is a 
map of graded $k$-algebras so that $\Theta_{\fg,r}^{exp}$ is homogenous by Theorem 
\ref{thm:theta}(5) and Theorem \ref{thm:thetas}(1).  Thus, $\Theta_{\fg,r}^{exp}$ determines 
$\tilde \Theta_{\fg,r,M}^{exp}$ in \\
 $\Gamma(\cO_{\bP\cC_r(\cN_p(\fg))},\cO_{\bP\cC_r(\cN_p(\fg))}(p^{r-1})\otimes k\bG_{(r)})$.

The proofs of assertions (1) - (4) are verified by evident analogues for $\tilde \Theta^{exp}_{\fg,r,}$ 
of the proofs given in the proof of Theorem \ref{thm:global-univ}
for the corresponding statements for $\tilde \Theta_{\bG,r,}$
\end{proof}

\vskip .1in

The following theorem makes explicit the definition of the Jordan type function $JT_{\fg,r,M}^{exp}(-)$
and states properties analogous to properties for J$T_{\bG,r,M}^(-)$ given in Theorem \ref{thm:JT}.
 The proof of Theorem \ref{thm:JT-exp}
follows from Proposition \ref{prop:proj-exp} exactly as the proof of Theorem \ref{thm:JT} follows
from Theorem \ref{thm:global-univ}.

\begin{thm}
\label{thm:JT-exp}
Let $(\bG, \cE_{(r)})$  be an affine group scheme of exponential  type of height $r$.
For any finite dimensional $\bG_{(r)}$-module $M$  and any 
 scheme-theoretic point  $\ul B$ of $\cC_r(\cN_p(\fg))$ projecting to $\langle \ul B \rangle \in \bP\cC_r(\cN_p(\fg))$,
we define 
$$JT_{\fg,r,M}^{exp}(\langle \ul B \rangle) \quad \equiv \quad 
JT(\sum_{s=0}^{p-1}(\cE_{r|s,B_s})_*(1\otimes u_{r-s-1}),k(\ul B)\otimes M),$$
the Jordan type of the $p$-nilpotent element \ 
$k(\ul B) \otimes_{\cO_{\bP\cC_r(\cN_p(\fg))}} \tilde\Theta_{\fg,r}^{exp} \in k(\ul B) \otimes \bG_{(r)}$
acting on $k(\ul B)\otimes M$.
\begin{enumerate}
\item
For any finite dimensional $\bG_{(r)}$-module $M$,
the association $ \langle \ul B \rangle \ \mapsto  JT_{\fg,r,M}^{exp}(\langle \ul B \rangle)$ 
determines a continuous map
$$JT_{\fg,r,M}^{exp}(-): \bP\cC_r(\cN_p(\fg)) \quad \to \quad \cY.$$ 
\item
For  $\bG_{(r)}$-modules $M, N$ of dimensions $m,n$ and any  scheme-theoretic point 
$\ul B  \in \bP\cC_r(\cN_p(\fg))$, 
$$JT_{\fg,r,M\oplus N}^{exp}( \langle \ul B \rangle) \ = \ JT_{\fg,r,M}^{exp}( \langle \ul B \rangle) + JT_{\fg,r,N}^{exp}( \langle \ul B \rangle)  \  \in \ \cY_{m+n}.$$
\item
Assume either  that $r=1$ or that $ \langle \ul B \rangle \in \bP\cC_r(\cN_p(\fg))$ is a generic point.  
For $\bG_{(r)}$-modules $M, N $  of dimensions $m, n$, 
$$JT_{\fg,r,M\otimes N}^{exp}( \langle \ul B \rangle) \ = 
\ JT_{\fg,r,M}^{exp}( \langle \ul B \rangle) \otimes JT_{\fg,r,N}^{exp}( \langle \ul B \rangle)  \  \in \ \cY_{m\cdot n}.$$
\item
If $f: M \to N$ is a surjective map of $\bG_{(r)}$-modules, then $JT_{\fg,r,N}^{exp}( \langle \ul B \rangle) \ 
\leq JT_{\fg,r,M}^{exp}( \langle \ul B \rangle)$
for any $ \langle \ul B \rangle \in \bP\cC_r(\cN_p(\fg))$;  in other words, $JT_{\fg,r,N}^{exp}(-) \leq JT_{\fg,r,M}^{exp}(-)$.
\item
A short exact sequence $ 0 \to N \ \to \ M \ \to \ Q \to 0$ of finite dimensional $\bG_{(r)}$-modules
 is locally split if and only if $JT_{\fg,r,M}^{exp}(-) = JT_{\fg,r,N}^{exp}(-) +JT_{\fg,r,Q}^{exp}(-).$
 \item
If  $\phi: \bH \to \bG$ is a morphism of affine group schemes of exponential type of height $r$, then 
for any finite dimensional $\bG_{(r)}$-module $M$
$$JT_{\fh,r,\phi*(M)}^{exp}(-): \bP \cC_r(\cN_p(\fh)) \ \to \ \cY$$ equals
$$JT_{\fg,r,M}^{exp}(-) \circ \phi_*: \bP \cC_r(\cN_p(\fh \ \to \ \bP\cC_r(\cN_p(\fg)) \ \to \ \cY.$$
\end{enumerate}
\end{thm}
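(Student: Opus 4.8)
The plan is to follow the proof of Theorem~\ref{thm:JT} step by step, replacing each use of Theorem~\ref{thm:global-univ} by the corresponding part of Proposition~\ref{prop:proj-exp}. First I would note that $JT^{exp}_{\fg,r,M}(\langle \ul B\rangle)$ is well defined, i.e. independent of the lift $\ul B$ of $\langle \ul B\rangle$: by Proposition~\ref{prop:proj-exp}(3),(4) the restriction of $\tilde\Theta^{exp}_{\fg,r,M}$ along $\Spec k(\ul B)\to\bP\cC_r(\cN_p(\fg))$ is a nonzero scalar multiple of the image of $\sum_{s}(\cE_{r|s,B_s})_*(1\otimes u_{r-s-1})$ in $\End_{k(\ul B)}(k(\ul B)\otimes M)$, and rescaling a $p$-nilpotent operator leaves its Jordan type unchanged. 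For continuity (assertion~(1)) I would cover $\bP\cC_r(\cN_p(\fg))$ by the affine opens $U_h=\Spec(k[\cC_r(\cN_p(\fg))][1/h])_0$, with $h$ homogeneous of degree $p^{r-1}$ chosen exactly as in the proof of Theorem~\ref{thm:JT}(1) (this is legitimate because $k[\cC_r(\cN_p(\fg))]$ is graded by weights dividing $p^{r-1}$, by Definition~\ref{defn:cCr} and Proposition~\ref{prop:action}(3), and because of Proposition~\ref{prop:weighted}); on each $U_h$, Proposition~\ref{prop:proj-exp}(1),(2) exhibits $JT^{exp}_{\fg,r,M}(-)$ as the pointwise Jordan type of the $p$-nilpotent, $A$-linear endomorphism $(h^{-1}\otimes 1)\Theta^{exp}_{\fg,r}$ of the free $A$-module $A\otimes M$, with $A=(k[\cC_r(\cN_p(\fg))][1/h])_0$, so that Theorem~\ref{thm:cont} gives continuity on $U_h$, hence on $\bP\cC_r(\cN_p(\fg))$.

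Assertions~(2), (4), (5) and (6) then go through verbatim, following the corresponding parts of Theorem~\ref{thm:JT}. For~(2), restriction along the algebra map $u\mapsto\sum_s(\cE_{r|s,B_s})_*(1\otimes u_{r-s-1})$ commutes with direct sums, so Jordan types add. For~(4), a surjection $M\twoheadrightarrow N$ of $\bG_{(r)}$-modules stays surjective after base change to $k(\ul B)$ and after restriction along that algebra map, so Corollary~\ref{cor:surjective} applies pointwise. For~(5), one uses, as in Theorem~\ref{thm:JT}(6), that over a field $K$ a short exact sequence of $K[u]/u^p$-modules has additive Jordan type if and only if it splits, together with the definition of a locally split exact sequence applied to restriction along $\tilde\Theta^{exp}_{\fg,r}$, translating between $\cC_r(\cN_p(\fg))$ and $V_r(\bG)$ through the homeomorphism $\Phi_{\fg,r}$ of Lemma~\ref{lem:bicont}. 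For~(6), the compatibility of $\Phi_{\fh,r}$ and $\Phi_{\fg,r}$ with $\phi_*$, which comes from Theorem~\ref{thm:thetas}(1),(2) and the naturality square analogous to~\eqref{eqn:restrict-square}, identifies $JT^{exp}_{\fh,r,\phi^*M}(y)$ with $JT^{exp}_{\fg,r,M}(\phi_*(y))$ for every $y\in\bP\cC_r(\cN_p(\fh))$.

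The tensor formula~(3) is the only part that is not purely formal, and I expect it to be the main obstacle. For $r=1$ the map $\cE_{(1),B}=\mu_B$ is a morphism of Hopf algebras and $\Theta^{exp}_{\fg,1}=\Theta_{\fg,1}$ (Remark~\ref{remark:leq2}), so restriction along it commutes with tensor products exactly as in Theorem~\ref{thm:JT}(4); this settles the unconditional case. For a generic point $\langle\ul B\rangle$ and arbitrary $r$, I would transfer the statement from the honest operator rather than compute with $\Theta^{exp}_{\fg,r}$ directly: by Corollary~\ref{cor:aff-supp}, $JT_{\fg,r,L}(-)=JT_{\bG,r,L}(-)\circ\Phi_{\fg,r}$ for every finite dimensional $\bG_{(r)}$-module $L$, so Theorem~\ref{thm:JT}(4) (equivalently \cite[Prop~4.7]{FPS}) already gives the tensor identity at $\langle\ul B\rangle$ for the functions $JT_{\fg,r,-}$; what remains is to see $JT^{exp}_{\fg,r,L}(\langle\ul B\rangle)=JT_{\fg,r,L}(\langle\ul B\rangle)$ for $L\in\{M,N,M\otimes N\}$. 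This is the content of Proposition~\ref{prop:linear}, whose hypothesis requires the Jordan type at $\langle\ul B\rangle$ to be maximal among those attained, and verifying this at a generic point is the delicate step. I would handle it using the lower semicontinuity of Proposition~\ref{prop:JM(-)} and Theorem~\ref{thm:cont}: reduce to the irreducible component of $\langle\ul B\rangle$, on which the generic value of the Jordan type is the largest value attained, and re-read the proof of Proposition~\ref{prop:linear} (which is local at each scheme-theoretic point and ultimately rests on \cite[Thm~1.12]{FPS}) with ``maximal'' taken relative to that component. Once this identification is in hand, (3) follows, completing the proof.
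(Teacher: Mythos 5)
Your proposal is correct and follows essentially the same route as the paper, whose entire proof is the single remark that the theorem follows from Proposition \ref{prop:proj-exp} exactly as Theorem \ref{thm:JT} follows from Theorem \ref{thm:global-univ}. The only place you do genuinely extra work is part (3), where you rightly observe that the generic-point case needs the identification $JT^{exp}_{\fg,r,L}(\langle \ul B \rangle) = JT_{\fg,r,L}(\langle \ul B \rangle)$ at generic points before \cite[Prop 4.7]{FPS} can be invoked --- a step the paper leaves implicit --- and your route via Proposition \ref{prop:linear} together with lower semicontinuity on the relevant irreducible component (equivalently, the representative-independence of generic Jordan types noted in Remark \ref{rem:pi-points}) supplies it.
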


\vskip .2in


\section{Examples of exponential Jordan types}
\label{sec:apps}

We provide some examples of computations of  the simplified
Jordan type function $JT^{exp}_{\fg,r,M}(-)$ of Theorem \ref{thm:JT-exp}. 
One challenge to computing Jordan types is the behavior of Jordan types with
respect to the sum of two commuting, $p$-nilpotent operators.  Another challenge
 is the fact that for $r > 1$ the Jordan type functions 
$JT_{\bG,r,M}(-), \ JT_{\fg,r,M}^{exp}(-)$ rarely commute with tensor products of 
$\bG_{(r)}$-modules at all points of $ \bP \cC_r(\fg)$.

\vskip .1in
\begin{prop}
By Proposition \ref{prop:linear}, Examples \ref{ex:constant-type} and \ref{ex:Carlson-mod} 
remain valid if $(\bG, \cE_{(r)})$  is an affine group scheme of exponential  type of height $r$
 and if $JT_{\bG,r,M}(-)$ is replaced by $JT^{exp}_{\fg,r,M}(-)$.
The justification of Example \ref{ex:perp} also justifies its  validity
when $JT_{\bG,r,M}(-)$ is replaced by $JT^{exp}_{\fg,r,M}(-)$.

Example \ref{ex:ht1} remains unchanged, since $JT_{\bG,1,M}(-) \ = \ JT^{exp}_{\fg,1,M}(-)$.
\end{prop}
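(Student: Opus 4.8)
The plan is to obtain all of the assertions from Proposition~\ref{prop:linear} together with Corollary~\ref{cor:aff-supp}, using the homeomorphism $\Phi_{\fg,r}$ of Lemma~\ref{lem:bicont} (which commutes with the $\bA^1$-actions) to identify $JT_{\fg,r,M}(-)$ with $JT_{\bG,r,M}(-)\circ\Phi_{\fg,r}$. The recurring point is that in Examples~\ref{ex:constant-type} and~\ref{ex:Carlson-mod} the Jordan types that occur are either the top element $\tfrac{\dim M}{p}\cdot[p]$ of the finite poset $\cY_{\dim M}$ --- hence, a fortiori, a maximal value among $\{JT(\Theta_{\fg,r,\ul B},k(\ul B)\otimes M)\}$, so that Proposition~\ref{prop:linear} applies --- or else a Jordan type dictated by an elementary property of $k[u]/u^p$-modules.

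For Example~\ref{ex:constant-type}: by Corollary~\ref{cor:aff-supp}, $JT_{\bG,r,M}(-)$ is constant with value $\ul a$ if and only if $JT_{\fg,r,M}(-)$ is, and if so then $\ul a$ is the only, hence a maximal, value occurring, so Proposition~\ref{prop:linear} yields $JT^{exp}_{\fg,r,M}(-)\equiv\ul a$. Conversely, if $JT^{exp}_{\fg,r,M}(-)\equiv\ul a$, pick any maximal value $\ul a_0$ of $JT_{\fg,r,M}(-)$ (one exists, as $\cY_{\dim M}$ is finite); evaluating Proposition~\ref{prop:linear} at a point where $\ul a_0$ is attained gives $\ul a=\ul a_0$, and then evaluating it at every point gives $JT_{\fg,r,M}(-)\equiv\ul a_0$, hence $JT_{\bG,r,M}(-)\equiv\ul a$ by Corollary~\ref{cor:aff-supp}. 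For an injective $M$ of dimension $m$ this common constant value is $\tfrac m p\cdot[p]$ (in particular $p\mid m$) by Theorem~\ref{thm:JT}(2), and conversely $JT^{exp}_{\fg,r,M}(-)\equiv\tfrac m p\cdot[p]$ forces $M$ injective by the same token. Applied to $M=k\bG_{(r)}$ this shows $JT^{exp}_{\fg,r,k\bG_{(r)}}(-)\equiv\tfrac{\dim k\bG_{(r)}}{p}\cdot[p]$, a fact used below.

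For Example~\ref{ex:Carlson-mod}: the previous paragraph, applied to the regular representation, shows that for every scheme-theoretic point $\ul B$ of $\cC_r(\cN_p(\fg))$ the operator $\Theta^{exp}_{\fg,r,\ul B}$ acts freely on $k(\ul B)\otimes k\bG_{(r)}$; thus $u\mapsto\Theta^{exp}_{\fg,r,\ul B}$ is a flat map $k(\ul B)[u]/u^p\to k(\ul B)\otimes k\bG_{(r)}$, that is, the underlying operator of a $\pi$-point $\alpha_{\ul B}$ of $\bG_{(r)}$. Restriction along $\alpha_{\ul B}$ is exact and sends projectives to projectives, so it carries the defining sequence $0\to L_\zeta\to\Omega^{2d}(k)\to k\to 0$ to a short exact sequence of $k(\ul B)[u]/u^p$-modules in which $\Omega^{2d}(k)$ restricts to $\Omega^{2d}_{k(\ul B)[u]/u^p}(k)\simeq k$ plus a free module. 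The elementary $k[u]/u^p$-computation that justifies Example~\ref{ex:Carlson-mod} now applies verbatim and shows $JT^{exp}_{\fg,r,L_\zeta}(\langle\ul B\rangle)$ equals $\tfrac n p\cdot[p]$ or $(\tfrac n p-1)[p]+[p-1]+[1]$ according as $\alpha^*_{\ul B}(\zeta)$ is or is not nonzero; and $\alpha^*_{\ul B}(\zeta)=0$ exactly when $JT^{exp}_{\fg,r,L_\zeta}(\langle\ul B\rangle)\neq\tfrac n p\cdot[p]$, which by Proposition~\ref{prop:linear} and Corollary~\ref{cor:aff-supp} is the condition $((\mu_x)_*\circ\epsilon_r)^*(\zeta)=0$ for $x=\Phi_{\fg,r}(\ul B)$. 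This is Example~\ref{ex:Carlson-mod} with the obvious reinterpretation of the vanishing hypothesis. Example~\ref{ex:perp} is handled the same way: its justification used only that a projective cover $P\twoheadrightarrow M$ restricts to a free $k[u]/u^p$-module along the relevant operator --- which again follows from $JT^{exp}_{\fg,r,P}(-)\equiv\tfrac{\dim P}{p}\cdot[p]$ --- after which the syzygy computation $\Omega_{k[u]/u^p}([i])=[p-i]$ gives $JT^{exp}_{\fg,r,\Omega(M)}(\langle\ul B\rangle)=\ul a^\perp+n[p]$.

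Finally, Example~\ref{ex:ht1} is immediate: every infinitesimal group scheme of height $\leq 1$ is of exponential type of height $1$, with $\Phi_{\fg,1}$ the canonical identification $\cN_p(\fg)\to V_1(\bG)$, so $\Theta_{\fg,1}=\Theta_{\bG,1}$ by Theorem~\ref{thm:thetas}(1) and $\Theta^{exp}_{\fg,1}=\Theta_{\fg,1}$ by Remark~\ref{remark:leq2}; hence $JT^{exp}_{\fg,1,M}(-)=JT_{\bG,1,M}(-)$ and Example~\ref{ex:ht1} is unchanged. The one step that genuinely needs attention is the passage to $\pi$-points in Examples~\ref{ex:Carlson-mod} and~\ref{ex:perp}: one must first know that $\Theta^{exp}_{\fg,r,\ul B}$ underlies an honest $\pi$-point of $\bG_{(r)}$ before quoting the original syzygy-theoretic arguments, and that is precisely where the constant-Jordan-type conclusion for $k\bG_{(r)}$ is invoked.
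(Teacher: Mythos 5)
Your proposal is correct and follows the same route the paper intends: Proposition \ref{prop:linear} transfers the attainment of a maximal Jordan type between $\Theta_{\fg,r}$ and $\Theta^{exp}_{\fg,r}$, and Corollary \ref{cor:aff-supp} identifies $JT_{\fg,r,M}(-)$ with $JT_{\bG,r,M}(-)\circ\Phi_{\fg,r}$. You do add one genuinely needed detail that the paper leaves implicit: for Examples \ref{ex:Carlson-mod} and \ref{ex:perp}, Proposition \ref{prop:linear} by itself only locates where the maximal type $\tfrac{n}{p}[p]$ is attained and says nothing about the value of $JT^{exp}$ elsewhere, so one must first check that $\Theta^{exp}_{\fg,r,\ul B}$ acts freely on $k\bG_{(r)}$ (which you deduce from the injective case of Example \ref{ex:constant-type}) in order to rerun the syzygy computations for $L_\zeta$ and $\Omega(M)$ verbatim along this flat operator; this is exactly the right way to make the paper's phrase ``the justification also justifies its validity'' precise.
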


\vskip .1 in

On the other hand, as we see in the following examples, some computations simplify if we replace 
$JT_{\bG_{(r)},M}(-)$ by $JT^{exp}_{\fg,M}(-)$.

\vskip .1in

\begin{ex}
\label{ex:Ga}
Adopt the notation of Examples \ref{ex:weight} and \ref{ex:Theta}.
Let $V$ be an $n$-dimensional $k$-vector space, and let $\alpha_0,\ldots \alpha_{r-1}$ be 
be pair-wise commuting $p$-nilpotent endomorphisms of $V$.  Denote by $M$ the 
$(\bG_a^{\times r})_1$-module such that $x_i \in k(\bG_a^{r})_1$ acts as $\alpha_i$ and denote
by $N$ the $\bG_{a(r)}$-module such that $u_i \in k\bG_{a(r)}$ also acts as $\alpha_i$.

Recall from Example \ref{ex:weight} the morphism
$\Phi: \bP^{r-1} \to w\bP(1,p,\ldots,p^{r-1} )$.  
Since $\Theta_{\bG_{a,r}}^{exp} \ = \ 
\sum_{i=0}^{r-1} T_i^{p^{r-1-i}} \otimes u_i \ \in \ k[V_r(\bG_{a(r)})] \otimes k\bG_{a(r)}$, 
$$\Phi^*(\Theta_{\bG_a, r}^{exp}) \quad = \quad \sum_{i=0}^{r-1} t_i^{p^{r-1}} \otimes x_i 
\ = \ F^{r-1*}(\Theta_{\bG_a^{\times r},1}).$$ 
Thus,
$$JT_{\fg_a,r,N}^{exp}(-)\circ \Phi \ = \ JT_{\bG_a^{\times r},1,M} \circ F^{r-1}: \
\bP^{r-1} \ \to \ \cY.$$
\end{ex}

\vskip .1in

In the following examples, we use (\ref{eqn:formula-exp}) of Definition \ref{defn:local-exp}
which becomes 
$$JT^{exp}_{\fg,r,M}(\ul B) \quad = \quad JT(\sum_{s=0}^{r-1} (exp_{B_s})_*(1\otimes u_{r-s-1}),k(\ul B))$$
provided that $(\bG, \cE_r)$  is associated with an embedding $i: \bG \hookrightarrow GL_N$ 
of {\it exponential type of height $r$}.

\vskip .1in

\begin{ex}
\label{ex:SL2,2}
Set $\bG = SL_{2(2)}$, so that $\fg = sl_2$.  We consider the irreducible $\bG$-module 
$$M \ = \ S(\lambda_0) \otimes S(\lambda_1)^{(1)}, \quad 0 \leq \lambda_0, \lambda_1
< p, \  \lambda = \lambda_0 + p\lambda_1.$$
  We proceed to compute 
$JT^{exp}_{sl_2,2,M}(-):  \cC_2(\cN_p(sl_2)) \to \cY_{\leq}$ as in Definition \ref{defn:local-exp}.
To compute $JT_{SL_2,2,M}(\ul B)$
for $B_0,B_1$ commuting elements in $sl_2$, we appeal to Proposition \ref{prop:adjoint} 
to reduce to the case that $\ul B = (a_0 E, a_1 E)$ where $E$ is the $2\times 2$ matrix with
$E_{1,2} = 1$ and $E_{1,1} = E_{2,1} = E_{2,2} = 0$.  We may then view $\cE_{(2),B_s}$
as factoring as $\bG_{a(2),k(\ul B)} \to \bG_{a(2),k(\ul B)} \subset SL_{2(2),k(\ul B)}$.

To compute  $JT_{sl_2,2,S(\lambda)}^{exp}(\ul B)$, we identify the Jordan type of 
$$(\cE_{(2),B_0})_*(u_1) + (\cE_{(2),B_1})_*(u_0) =   a_0 E \cdot u_1 + a_1^p E \cdot u_0 
\in k(\ul B) \otimes kSL_{2(2)}$$
on $S(\lambda_0) \otimes S(\lambda_1)^{(1)}$.  
Here, the action of $a_0 E \cdot u_1$ on $S(\lambda_0)$
is zero, whereas the action of $a_1^p E \cdot u_0$ on $S(\lambda_0)$ is $a_1^p$ times the action of
$E \in sl_2$ on $S(\lambda_0)$;  the action of $a_0 E \cdot u_1$ on $S(\lambda_1)^{(1)}$
is $a_0$ times the action of $E \in sl_2$ on $S(\lambda_1)$ and the action of $a_1^p E \cdot u_0 $
on $S(\lambda_1)^{(1)}$ is zero.

Observe that the Jordan type of $E$ on $S(\lambda_0) = [m]$ where $m= \lambda_0 +1$
and on $S(\lambda_1) = [n]$ where $n = \lambda_1+1$.  
Thus, if $a_0 \not= 0, \ a_1 = 0$, then the Jordan type $S(\lambda)$ is $m\cdot [n]$; and if 
$a_0 = 0, \ a_1 \not= 0$, then the Jordan type $S(\lambda)$ is $n\cdot [m]$.
More generally,  $S(\lambda_0) \otimes S(\lambda_1)^{(1)}$ as a $k(\ul B) \otimes k[u]/u^p$-module
with $1\otimes u$ acting as $a_0 E \cdot u_1 + a_1^p E \cdot u_0$ can be identified 
with a tensor product of two $k[u]/u^p$-modules: the first with 
with $u$ acting as $a_1^p E \cdot u_0$ on $S(\lambda_0)$, the second with $u$
acting as  $ a_0 E \cdot u_1$ on $S(\lambda_1)^{(1)}$.  Consequently, if both
$a_0, \ a_1$ are non-zero, then the the Jordan type
of $a_1^p E \cdot u_0 + a_0 E \cdot u_1$ on $S(\lambda)$ equals $[m]\otimes [n]$.
\end{ex}

\vskip .1in

\begin{ex}
\label{ex:GLN}
Let $V$ be a polynomial representation of $GL_N$ of degree $< p$
and consider the $i$-th Frobenius twist $V^{(i)}$ for some $i \geq 0$.  
We fix some $r > i$ and proceed to determine $JT_{\fg l_N,r,V^{(i)}}(\ul B)$
for $\ul B = (B_0,\ldots,B_{r-1})$, an $r$-tuple of $p$-nilpotent, pairwise commuting 
elements of $k(\ul B) \otimes \fg l_N$.
Then the action of $(\exp_{B_s})_*(u_{r-s-1})$ on $V^{(i)}$ is non-zero
if and only if $i = r-s-1$, in which case this action equals the action of 
$B_s \in k(\ul B) \otimes \fg l_N$ 
 acting on $k(\ul B) \otimes V$.  Consequently, $JT_{\fg l_N,r,V^{(i)}}^{exp}(\ul B) = 
JT(B_s,V)$ where $s = r-i-1$.

More generally, we can take a polynomial representation $V_s$ of $GL_N$ for 
each $s, \ 0 \leq s< r$ with each $V_s$
of degree $< p$ and consider $M = \otimes_{s= 1}^{r-1} V_s^{(s)}$.  
Let $\ul B$ be a geometric point of $\cC_r(\fg l_N)$.  
The action of $u = \sum_{s=0}^{r-1}(exp_{B_s})_*(1\otimes u_{r-s-1})$
on $k(\ul B)\otimes M$ is the tensor product of the actions of 
$B_s \in k(\ul B) \otimes \fg l_N$  acting on $k(\ul B) \otimes V_s$.

Thus,  $JT_{\fg l_N,r,M}^{exp}(\ul B) \ = \ \bigotimes_{s=0}^{r-1} JT(B_{r-s-1},V_s).$
\end{ex}

\vskip .1in

\begin{ex}
\label{ex:Theta-GL_N}
We employ Theorem 3.7 of \cite{FP3} to consider the general situation of $(\bG, \cE_{(r)})$ 
an affine group scheme of exponential type of height $r$ and $M$ a $\bG$ module of
dimension $m$.  Namely, if  $\rho: \bG \to GL_m$ determines $M$, then 
$(\rho\circ \cE_{(r),B_s})_*(1\otimes u_{r-s-1})$ acting on
$k(\ul B) \otimes M$ is given by the $m\times m$ matrix whose $(i,j)$-entry
equals the coefficient of  $t^{p^{r-s-1}} \in k(\ul B)\otimes \bG_{a(r)}$ of $(\cE_{r|s,B_s})_*(1\otimes u_{r-s-1})$ 
applied to $\rho^*(X_{i,j}).$

Thus $JT^{exp}_{\fg,r,M}(\ul B)$ is the Jordan type of the sum (indexed by $s, 0 \leq s < r$) 
of these $m\times m$ matrices.  
\end{ex}

\vskip .2in


\section{Vector bundles and stratifications}
\label{sec:bundles}

In this section, we revisit the construction of vector bundles associated to 
 $\bG_{(r)}$-modules of constant Jordan type (and more general constant $j$-type).   
 We refer to \cite{CFP} for an introduction to modules of constant
Jordan type and for numerous examples of these interesting $\bG$-modules.
In \cite{FP3}, such vector bundles were associated
to $\bG_{(r)}$-modules of constant Jordan type in order to provide invariants
which can distinguish some  $\bG_{(r)}$-modules whose Jordan type functions are equal.
 
 As we point out, the constructions of \cite{FP3} when applied to an arbitrary $\bG_{(r)}$-module
$M$ yield coherent sheaves on $\bP V_r(\bG)$ whose restrictions to each stratum associated
 to  $JT_{\bG,r,M}(-)$ are locally free.

\vskip .1in

\begin{defn}
Let $\bG$ be an affine group scheme of finite type over $k$, let $r$ be a chosen positive integer,
$j$ and integer with  $1\leq j < r$,
and let $M$ be a $\bG_{(r)}$-module.  We define the following coherent sheaves on $\bP V_r(\bG)$
$$\cKer\{ \tilde\Theta_{\bG,r,M}^j\}, \quad \cCoker\{ \tilde\Theta_{\bG,r,M}^j\}, \quad \cIm\{ \tilde\Theta_{\bG,r,M}^j\}$$
as the sheaves associated to kernel, cokernel, and image of the $j$-th iterate of the map of
graded $k[V_r(\bG)]$-modules
$$\tilde \Theta_{\bG,r,M}: (k[V_r(\bG)]\otimes M)^{\sim} \quad \to \quad (k[V_r(\bG)]\otimes M)^\sim(p^{r-1})$$
given in Theorem \ref{thm:global-univ} (1).
\end{defn}

\vskip .1in

\begin{defn}
\label{defn:sheaves}
Let $(\bG, \cE_{(r)})$ be an affine group scheme of exponential type of height $r$, 
$j$ and integer with  $1\leq j < r$, and let $M$ be a $\bG_{(r)}$-module.  We define the following 
coherent sheaves on $\bP \cC_r(\fg)$
$$\cKer\{ \tilde\Theta_{\fg,r,M}^j\}, \quad \cCoker\{ \tilde\Theta_{\fg,r,M}^j\}, \quad \cIm\{ \tilde\Theta_{\fg,r,M}^j\}$$
$$\cKer\{ (\tilde\Theta_{\fg,r,M}^{exp})^j\}, \quad \cCoker\{(\tilde\Theta_{\bG,r,M}^{exp})^j\}, \quad
 \cIm\{ (\tilde\Theta_{\bG,r,M}^{exp})^j\}$$
as the sheaves associated to kernel, cokernel, and image of the $j$-th iterate of the maps 
$\tilde\Theta_{\fg,r,M}$ and $\tilde\Theta_{\fg,r,M}^{exp}$ of 
graded $k[\cC_r(\fg)]$-modules given in Proposition \ref{prop:proj-fg} (1) and Proposition \ref{prop:proj-exp} (1).
\end{defn}

\vskip .1in

We next verify that the coherent sheaves of Definition \ref{defn:sheaves} are locally constant
on each stratum.

\begin{thm}
\label{thm:restr}
Let $\bG$ be an affine group scheme of finite type of $k$ and let $M$ be a finite dimensional
$\bG_{(r)}$-module.   Then for each Jordan type $\ul a \ = \ \sum_{i=1}^p a_i [i]$, the restrictions of 
the coherent sheaves $\cKer\{ \tilde\Theta_{\bG,r,M}^j\}, \ \cCoker\{ \tilde\Theta_{\bG,r,M}^j\}, 
\ \cIm\{ \tilde\Theta_{\bG,r,M}^j\}$ to the locally closed subvariety 
$JT_{\bG,r,M}^{-1}(\ul a) \ \subset \ \bP V_r(\bG)$ are locally free coherent sheaves on $JT_{\bG,r,M}^{-1}(\ul a)$.

If $(\bG, \cE_{(r)})$ is an affine group scheme of exponential type of height $r$, then 
the pull-backs along $\Phi_{\fg,r}: JT_{\fg,r,M}^{-1}(\ul a) \to JT_{\bG,r,M}^{-1}(\ul a)$
of  the sheaves
$ker\{ \tilde\Theta_{\bG,r,M}^j\}, \\ coker\{\tilde\Theta_{\bG,r,M}^j\}, \
 im\{ \tilde\Theta_{\bG,r,M}^j\}$
 are locally free sheaves on $JT_{\fg,r,M}^{-1}(\ul a) \ \subset \ \bP \cC_r(\fg)$.

Moreover, if $(\bG, \cE_{(r)})$ is an affine group scheme of exponential type of height $r$, then 
the restrictions of the coherent sheaves 
$\cKer\{ (\tilde\Theta_{\fg,r,M}^{exp})^j\}, \ \cCoker\{(\tilde\Theta_{\fg,r,M}^{exp})^j\}, \\
 \cIm\{ (\tilde\Theta_{\fg,r,M}^{exp})^j\}$
 to $(JT_{\fg,r,M}^{exp})^{-1}(\ul a) \ \subset \ \bP \cC_r(\fg)$
 are locally free sheaves for any Jordan type $\ul a \ = \ \sum_{i=1}^p a_i [i]$.
 \end{thm}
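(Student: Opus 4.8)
The plan is to reduce the statement to a purely linear-algebra fact about a $p$-nilpotent endomorphism of a finitely generated projective module over a Noetherian $k$-algebra, namely: if $\theta$ is such an operator and the Jordan type function is \emph{constant} on an open subscheme, then kernel, cokernel and image of each power $\theta^j$ are locally free on that subscheme. Once this is established, all three parts of the theorem follow by applying it to the appropriate operator: $\tilde\Theta_{\bG,r,M}$ on $\bP V_r(\bG)$ for the first part, its pull-back along $\Phi_{\fg,r}$ for the second part (using that $\Phi_{\fg,r}^*\Theta_{\bG,r}=\Theta_{\fg,r}$ by Theorem \ref{thm:thetas}(1), and that $\Phi_{\fg,r}$ restricts to a homeomorphism of strata by Lemma \ref{lem:bicont} together with Corollary \ref{cor:aff-supp}), and the operator $\Theta_{\fg,r}^{exp}$ for the third part. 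In all three cases the relevant strata are exactly the level sets of the corresponding Jordan type function, which are locally closed by Corollary \ref{cor:locally closed} (and its $\cC_r$-analogue), and on each such stratum the Jordan type function is by construction constant.

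First I would set up the local computation. Work Zariski-locally on an affine open $\Spec A \subset \bP V_r(\bG)$ (or $\bP\cC_r(\cN_p(\fg))$) on which the relevant graded module trivializes, so that $M_A := (k[V_r(\bG)]\otimes M)^\sim$ restricts to a free $A$-module of rank $m$, and $\tilde\Theta$ restricts to a $p$-nilpotent $A$-linear endomorphism $\theta_A$. By Lemma \ref{lem:ranks}, for each $d$ and each $s$ with $1\le s<p$ the locus where $\rk(\theta_\fp^s)\le d$ is closed; hence the locus where $\rk(\theta_\fp^s)$ equals a fixed value $d_s$ is locally closed, and the stratum $JT_{\bG,r,M}^{-1}(\ul a)$ is exactly the common such locus for the ranks $d_s = \sum_{i=s+1}^p a_i(i-s)$ dictated by $\ul a$ via Proposition \ref{prop:rank}. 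After intersecting with $\Spec A$ and replacing $A$ by the coordinate ring of this locally closed subscheme (it suffices to prove local freeness after further localizing, so we may pass to the stratum and keep $M_A$ free), we are reduced to: $A$ Noetherian, $M_A$ free of rank $m$, $\theta_A$ $p$-nilpotent, and $\rk(\theta_\fp^s)$ constant equal to $d_s$ for all $\fp$ and all $1\le s<p$.

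Next I would invoke the standard fact that a finitely generated module over a Noetherian ring whose fiber dimension is constant is locally free (it is flat by the local criterion, hence projective since finitely presented — this is essentially the content already used in the proof of Proposition \ref{prop:specializeM} via Nakayama). Apply this to $\Coker(\theta_A^j)$: its fiber at $\fp$ is $\Coker(\theta_\fp^j)$ of dimension $m-\rk(\theta_\fp^j)$, which is constant on the stratum; hence $\cCoker\{\tilde\Theta^j\}$ is locally free there. Then $\Im(\theta_A^j)$ is the kernel of the surjection $M_A \twoheadrightarrow \Coker(\theta_A^j)$ onto a (locally) free, hence (locally) projective module, so the sequence splits locally and $\Im(\theta_A^j)$ is a local direct summand of $M_A$, hence locally free. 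Finally $\Ker(\theta_A^j)$ is the kernel of the surjection $M_A \twoheadrightarrow \Im(\theta_A^j)$ onto a locally free module, so by the same splitting argument $\cKer\{\tilde\Theta^j\}$ is locally free. (Here I use $p$-nilpotence only to the extent that $\theta_A^{p}=0$, so that all powers of interest are among $j=1,\dots,p-1$; the arguments are identical for $\tilde\Theta_{\fg,r,M}$ and $\tilde\Theta_{\fg,r,M}^{exp}$, whose fiberwise ranks are governed by $JT_{\fg,r,M}$ and $JT^{exp}_{\fg,r,M}$ respectively, the latter being a bona fide Jordan type function by Theorem \ref{thm:JT-exp}.)

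The main obstacle is the bookkeeping needed to pass cleanly between the three settings — in particular, making sure that ``locally free on the stratum'' is preserved under the pull-back $\Phi_{\fg,r}^*$ in the middle assertion. Here one must use that $\Phi_{\fg,r}$ is a homeomorphism (Lemma \ref{lem:bicont}) carrying $JT_{\fg,r,M}^{-1}(\ul a)$ onto $JT_{\bG,r,M}^{-1}(\ul a)$ (Corollary \ref{cor:aff-supp}), that $\Phi_{\fg,r}^*$ is a morphism of schemes so pull-back of a locally free sheaf is locally free, and — to identify the pull-back of the sheaves $\cKer,\cCoker,\cIm$ attached to $\tilde\Theta_{\bG,r,M}$ with those attached to $\tilde\Theta_{\fg,r,M}$ — that $\Phi_{\fg,r}^*$ is a flat (indeed exact on quasi-coherent sheaves is too strong; rather $\Phi_{\fg,r}^*$ applied to the presentation $\tilde\Theta_{\bG,r,M}^j$ computes $\tilde\Theta_{\fg,r,M}^j$ up to the scalar ambiguity recorded in Proposition \ref{prop:proj-fg}(3), which does not affect kernel/cokernel/image). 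Everything else is the routine ``constant fiber rank implies locally free'' argument, which I would state once and cite three times rather than repeat.
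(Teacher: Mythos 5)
Your proposal is correct and follows essentially the same route as the paper: both pass to the Jordan-type stratum (the paper via the reduced closure $\ol{Y_{\ul a}}$ and its open subset $Y_{\ul a}=JT_{\bG,r,M}^{-1}(\ul a)$, you via the rank conditions of Lemma \ref{lem:ranks} and Proposition \ref{prop:rank}), note that the fiberwise ranks of all powers $\theta^s$ are constant there, and then run the constant-rank argument --- which the paper simply cites as \cite[Thm 4.13]{FP3} and you write out explicitly (cokernel first, then image and kernel as kernels of surjections onto locally free modules), with the exponential and $\Phi_{\fg,r}$-pullback cases handled identically in both treatments. The one small imprecision is your appeal to the ``local criterion of flatness'': the fact actually needed is that a finitely generated module of constant fiber rank over a \emph{reduced} Noetherian ring is locally free, which is harmless here because the strata carry their reduced induced scheme structure, exactly as in the paper's choice of the homogeneous (radical) ideal $I_{\ul a}$.
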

 
 \begin{proof}
 Let $Y_{\ul a}$ denote $(JT_{\fg,r,M}^{exp})^{-1}(\ul a) \ \subset \ \bP V_r(\fg)$ and consider its closure
 $$\ol {Y_{ \ul a}} \ \equiv \ JT_{\fg,r,M}^{-1}(\ol{\{ \ul a \}}) \ = \ \bigcup_{\ul b \leq \ul a} JT_{\fg,r,M}^{-1}(\ul b)
 \quad \hookrightarrow \quad \bP V_r(\bG)$$
 corresponding to the homogeneous ideal $I_{\ul a} \ \subset \ k[V_r(\bG)]$.  Consider
 $k[\ol {Y_{ \ul a}}] \ = \ k[V_r(\bG)]/I_{\ul a}$, equipped with the grading inherited from the grading of $k[V_r(\bG)]$
 and set 
 $$\Theta_{\bG,r;\ol{Y_{\ul a}}} \ \equiv \ k[\ol {Y_{ \ul a}}] \otimes_{k[V_r(\bG)]} \Theta_{\bG,r}.$$
 So defined, $\Theta_{\bG,r;\ol{Y_{\ul a}}}$ determines a $p$-nilpotent $k[\ol {Y_{ \ul a}}]$-linear operator 
 $\Theta_{\bG,r,M:\ol{Y_{\ul a}}}$ on
 $k[\ol {Y_{ \ul a}}] \otimes M$ as in Theorem \ref{thm:theta}.
 As in Theorem \ref{thm:global-univ}, $\Theta_{\bG,r;\ol{Y_{\ul a}}}$ determines
 $$\tilde \Theta_{\bG,r;\ol{Y_{\ul a}}} \ \in \ \Gamma(\ol{Y_{\ul a}},\cO_{\ol{Y_{\ul a}}}(p^{r-1}) \otimes k\bG_{(r)})$$
 and the map of graded $k[\ol {Y_{ \ul a}}]$-modules 
 $$\tilde\Theta_{\bG,r,M;\ol{Y_{\ul a}}}:  k[\ol {Y_{ \ul a}}] \otimes M)^\sim \ \to \ (k[\ol {Y_{ \ul a}}]
  \otimes M)^\sim(p^{r-1}).$$
 Then the restrictions of $\cKer\{ \tilde\Theta_{\bG,r,M}^j\}, \ \cCoker\{ \tilde\Theta_{\bG,r,M}^j\}, 
\ \cIm\{ \tilde\Theta_{\bG,r,M}^j\}$ along $Y_{\ul a} \subset \ol{Y_{ \ul a}} \ \hookrightarrow \ \bP V_r(\bG)$
 are identified with the restrictions to the open subset $Y_{\ul a} \subset \ol{Y_{ \ul a }}$ of 
$\cKer\{ \tilde\Theta_{\bG,r,M;\ol{Y_{\ul a}}}^j\}, \ \cCoker\{ \tilde\Theta_{\bG,r,M;\ol{Y_{\ul a}}}^j\}, 
\ \cIm\{ \tilde\Theta_{\bG,r,M;\ol{Y_{\ul a}}}^j\}$.

The proof that the restrictions to $Y_{\ul a}$ of $\cKer\{ \tilde\Theta_{\bG,r,M;\ol{Y_{\ul a}}}^j\}, 
\ \cCoker\{ \tilde\Theta_{\bG,r,M;\ol{Y_{\ul a}}}^j\}$ and 
$\cIm\{ \tilde\Theta_{\bG,r,M;\ol{Y_{\ul a}}}^j\}$
are locally free coherent sheaves on $Y_{\ul a}$ is completed by repeating the proof
of \cite[Thm 4.13]{FP3} with $U \subset \bP (G), \ \tilde \Theta_G$ of that theorem replaced by 
$Y_{\ul a} \ \subset \ \ol{Y_{ \ul a}}, \ \tilde \Theta_{\bG,r;\ol{Y_{\ul a}}}$
as above.

If $(\bG, \cE_{(r)})$ is an affine group scheme of exponential type of height $r$,
then the pull-backs of these locally free sheaves along along the homeomoprhism
$\Phi_{\fg,r}: JT_{\fg,r,M}^{-1}(\ul a) \to JT_{\bG,r,M}^{-1}(\ul a)$ are also locally free. 
Finally, only notational changes replacing $\Theta_{\fg,r,M}$ by $\Theta_{\fg,r,M}^{exp}$
are needed to obtains proofs of the assertions for $\cKer\{ (\tilde\Theta_{\fg,r,M}^{exp})^j\}, \ 
\cCoker\{(\tilde\Theta_{\fg,r,M}^{exp})^j\}, 
 \cIm\{ (\tilde\Theta_{\fg,r,M}^{exp})^j\}$.
 \end{proof}
 
 \vskip .1in
 
 Example \ref{ex:constant-type} gives examples of $\bG_{(r)}$ modules leading to a stratification
 of $\bP V_r(\bG)$ with a single stratum and  Example \ref{ex:Carlson-mod} gives examples of 
 $\bG_{(r)}$-modules with exactly two strata.  
 
 We give one ``trivial" example of a $\bG_{(r)}$ module with more than two strata, primarily to show 
 that some examples can be easily computed.
 
 \begin{ex}
 \label{ex:stratify}
 Let $\bG = \bG_a^{\times p}$, so that  $k\bG_{(1)} = k[u_0,\dots,u_{p-1}]$.  Consider 
 the $k\bG_{(1)}$-module $M$ defined as the quotient of $k\bG_{(1)}$ by the ideal
 generated by $\{ u_i^i \}$.  Consider points $\ul b = \langle b_0,\ldots, b_{p-1} \rangle \in 
 \bP V_1(\bG) \simeq \bP^{p-1}$.
  If $\ul b$ is given by $b_i = 1, \ b_j = 0, j \not= i$, then 
  $JT_{\bG,1,M}(\ul b)$ equals $\sum_{j\not= i} [j] + i[1]$.
 \end{ex}
 
 \vskip .1in
 
 We next investigate the relationship between the coherent sheaves 
 $\cKer\{ \tilde\Theta_{\fg,r,M}^j\}, \\ \cCoker\{\tilde\Theta_{\bG,r,M}^j\}, \
 \cIm\{ \tilde\Theta_{\bG,r,M}^j\}$
and the coherent sheaves $\cKer\{ (\tilde\Theta_{\fg,r,M}^{exp})^j\},\\
\cCoker\{(\tilde\Theta_{\bG,r,M}^{exp})^j\}, \ \cIm\{ (\tilde\Theta_{\bG,r,M}^{exp})^j\}$.

\vskip .1in

\begin{defn}
\label{defn:Theta/P1}
Let $(\bG, \cE_{(r)})$ be an affine group scheme of exponential type of height $r$.
Consider the bigraded algebra $k[s,t] \otimes k[\cC_r(\cN_p(\fg))]$ 
where both $s\otimes 1$ and $t\otimes 1$ are given 
the bigrading  $(1,0)$.  We define $\Theta_{\fg/\bP^1,r}$ to be
\begin{equation}
\label{eqn:theta-homotopy}
\Theta_{\fg/\bP^1,r} \quad \equiv \quad (s\otimes 1)(1\otimes  \Theta_{\fg,r}^{\exp}) \ + 
(t\otimes 1) (1\otimes \Phi_{\fg,r}^*(\Theta_{\bG_{(r)} })) \ \in \  k[s,t] \otimes k[\cC_r(\cN_p(\fg))] \otimes k\bG_{(r)}.
\end{equation}
So defined, $\Theta_{\bG/\bP^1,r}$ is bihomogenous of bidegree $(1,p^{r-1})$, thereby determining a 
global section of $\cO_{\bP^1 \times \bP\cC_r(\cN_p(\fg))}(1,p^{r-1})$.
\end{defn}
\vskip .1in

\begin{prop}
\label{prop:overP1}
Retain the notation and hypotheses of Definition \ref{defn:Theta/P1}.  Let $M$ be a finite dimensional
$\bG_{(r)}$-module.  For any scheme theoretic point $(\ul B, \langle s,t \rangle) \in \cC_r(\fg) \times \bP^1$,
$$JT(\Theta^{exp}_{\fg,r,\ul B}, k(\ul B)\otimes M) \ = \ \frac{m}{p}[p]  \ \ \iff \\ 
JT(\Theta_{\fg/\bP^1,r,(\ul B,\langle s,t \rangle)}, k(\ul B,\langle s,t \rangle)\otimes M)
 \ = \ \frac{m}{p}[p].$$
\end{prop}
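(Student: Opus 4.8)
The plan is to prove the equivalence one scheme-theoretic point at a time, reducing it to the commuting--nilpotent computation of Sobaje (recorded in Proposition~\ref{prop:linear}) and the rank estimate \cite[Thm 1.12]{FPS}. First I would fix a scheme-theoretic point $(\ul B,\langle s,t\rangle)$ of $\cC_r(\cN_p(\fg))\times\bP^1$, write $K$ for its residue field and $V=K\otimes M$, and use Theorem~\ref{thm:thetas}(1),(2),(5) together with Proposition~\ref{prop:specialize} to identify the specialization of $\Theta_{\fg/\bP^1,r}$ at this point, as an operator on $V$, with (a nonzero scalar multiple of)
\[
s\cdot\Theta^{exp}_{\fg,r,\ul B}\;+\;t\cdot\Theta_{\fg,r,\ul B}
\;=\;s\sum_{i=0}^{r-1}\Theta_{\fg,r|i,\ul B}\;+\;t\,\Theta_{\fg,r,\ul B},
\]
using $\Phi^*_{\fg,r}(\Theta_{\bG,r})=\Theta_{\fg,r}$ from Theorem~\ref{thm:thetas}(1). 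All terms here lie in the commutative $K$-subalgebra $\cA\subset K\otimes k\bG_{(r)}$ generated by the pairwise commuting $p$-nilpotent elements $\Theta_{\fg,r|0,\ul B},\dots,\Theta_{\fg,r|r-1,\ul B}$ (commutativity as in the proof of Proposition~\ref{prop:linear}); and since Jordan type is insensitive to the field extension $k(\ul B)\hookrightarrow K$, the right-hand condition $JT(\Theta^{exp}_{\fg,r,\ul B},k(\ul B)\otimes M)=\tfrac{m}{p}[p]$ may also be read off inside $\cA$ acting on $V$.

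Next I would invoke Sobaje's expansion \cite[Prop 2.3]{Sob}, in the form used to prove Proposition~\ref{prop:linear}: $\Theta_{\fg,r,\ul B}=\sum_{i=0}^{r-1}\Theta_{\fg,r|i,\ul B}+R_{\ul B}$, where $R_{\ul B}$ is a $K$-linear combination of products of at least two of the commuting factors $\Theta_{\fg,r|i,\ul B}$, so $R_{\ul B}$ lies in the square of the augmentation ideal of $\cA$. Substituting, the specialized operator becomes $(s+t)\,\Theta^{exp}_{\fg,r,\ul B}+t\,R_{\ul B}$, whose ``linear part'' is $(s+t)\,\Theta^{exp}_{\fg,r,\ul B}$ modulo products of at least two of the $\Theta_{\fg,r|i,\ul B}$. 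When $s+t\neq 0$ I would rescale by $(s+t)^{-1}$ and apply \cite[Thm 1.12]{FPS} exactly as in the proof of Proposition~\ref{prop:linear} (which treats $\Theta_{\fg,r,\ul B}$ itself): this gives $JT\!\big((s+t)\Theta^{exp}_{\fg,r,\ul B}+tR_{\ul B},V\big)=\tfrac{m}{p}[p]$ if and only if $JT(\Theta^{exp}_{\fg,r,\ul B},V)=\tfrac{m}{p}[p]$, which is the desired equivalence at every point of the dense open locus $\{s+t\neq 0\}$, in particular at the generic point of each $\bP^1$-fibre.

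The remaining point, which I expect to be the main obstacle, is the fibrewise locus $s+t=0$: there the linear part vanishes and the specialized operator is the pure higher-order term $s(\Theta^{exp}_{\fg,r,\ul B}-\Theta_{\fg,r,\ul B})=-sR_{\ul B}$, so one cannot simply cite \cite[Thm 1.12]{FPS}. Here I would argue directly inside $\cA$, comparing for each $j$ with $1\le j<p$ the rank of the $j$-th power of $-sR_{\ul B}$ with that of the $j$-th power of $\Theta^{exp}_{\fg,r,\ul B}$ on $V$, exploiting the filtration of $V$ by powers of the augmentation ideal of $\cA$ and the same nilpotence and commutativity input behind \cite[Thm 1.12]{FPS} (see also \cite[Prop 8]{CLN}, \cite[Lem 6.4]{SFB2}); this is the one place where uniformity over \emph{all} of $\bP^1$, rather than only over its generic point, must be verified. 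Modulo this step, the proof is a mechanical transcription of the arguments already given for Theorem~\ref{thm:thetas}, Proposition~\ref{prop:linear}, and Proposition~\ref{prop:proj-exp}, applied to the bihomogeneous operator $\Theta_{\fg/\bP^1,r}$ of Definition~\ref{defn:Theta/P1}.
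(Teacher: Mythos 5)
Your treatment of the open locus $s+t\neq 0$ coincides with the paper's entire proof: one writes the specialization as $(s+t)\,\Theta^{exp}_{\fg,r,\ul B}+t\,(\Theta_{\fg,r,\ul B}-\Theta^{exp}_{\fg,r,\ul B})$, notes that the second summand is a combination of products of at least two commuting $p$-nilpotent factors, rescales by $(s+t)^{-1}$, and invokes \cite[Thm 1.12]{FPS} (or \cite[Prop 8]{CLN}) exactly as in Proposition \ref{prop:linear}. The published argument asserts this ``for any $\langle s,t\rangle$'' and stops there; it does not isolate the locus $s+t=0$, so on the region where the two arguments overlap they are the same.

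You are right, however, that $s+t=0$ is the crux, and the gap you leave open there cannot be closed by a finer rank analysis inside $\cA$. At $\langle s,t\rangle=\langle 1,-1\rangle$ the specialized operator is $\Theta^{exp}_{\fg,r,\ul B}-\Theta_{\fg,r,\ul B}=-R_{\ul B}$, an element of the \emph{square} of the augmentation ideal of the subalgebra generated by the commuting $p$-nilpotent factors, i.e.\ an operator whose linear part vanishes. The degenerate case of the very results you cite says that freeness is governed by the linear part, and such an element never has Jordan type $\tfrac{m}{p}[p]$ on a nonzero module (already for a single monomial $uv$ one has $\rk\bigl((uv)^{p-1}\bigr)\leq\rk(u^{p-1})\leq m/p$, with equality forcing the nilpotent $v$ to act invertibly on the nonzero $v$-stable subspace $u^{p-1}M$). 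Consequently, at points with $s+t=0$ the right-hand condition fails whenever $M\neq 0$, while the left-hand condition can certainly hold --- take $M$ a nonzero projective $\bG_{(r)}$-module; for $r\leq 2$ one even gets the zero operator at $\langle 1,-1\rangle$ since then $\Theta^{exp}_{\fg,r}=\Theta_{\fg,r}$. So the biconditional, read at every scheme-theoretic point of $\bP^1$ as the statement demands, does not hold; your proof is correct precisely where the statement is correct, and the obstacle you flag is a defect of the statement (and of the published one-paragraph proof, whose claim that the difference from $\Theta^{exp}_{\fg,r,\ul B}$ is purely higher order is accurate, up to the nonzero scalar $s+t$, only when $s+t\neq 0$), not of your argument.
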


\begin{proof}
Observe that 
$$\Theta_{\fg/\bP^1,r,(\ul B,\langle s,t \rangle)} \ = \ s \cdot \Theta_{\fg,r,\ul B}^{\exp} \ + \
t \cdot \Theta_{\fg,r,\ul B} \quad \in \quad k(\ul B,\langle s,t \rangle) \otimes k\bG_{(r)}.$$
As recalled in the proof of Proposition \ref{prop:linear}, $\Theta_{\fg,r,\ul B} - \Theta_{\fg,r,\ul B}^{\exp}$ 
is a linear combination of terms each of which is a product of at least 2 $p$-nilpotent factors.  Thus,
for any $\langle s,t \rangle \in \bP^1$, $s \cdot \Theta_{\fg,r,\ul B}^{\exp} \ + \
t \cdot \Theta_{\fg,r,\ul B}$ differs from $\Theta_{\fg,r,\ul B}^{\exp}$ by a sum of terms
each of which is a product of at least 2 $p$-nilpotent factors.   Consequently, the
proposition follows from \cite[Prop 8]{CLN}.
\end{proof}

\vskip .1in

If $(\bG, \cE_{(r)})$  be an affine group scheme of exponential type of height $r$, if $j$ is a positive
integer between 1 and $r-1$,  and if $M$ is a finite dimensional $\bG_{(r)}$-module, then $M$
 is said to have constant $j$-rank if for every scheme-theoretic point $\ul B \in \cC_r(\cN_p(\fg))$ the
 rank of $\Theta_{\fg,r,\ul B}^j: k(\ul B) \otimes M \to k(\ul B) \otimes M$ is 
 independent of $\ul B$.  (See \cite[Defn 3.12]{FP3}.)
 Thus, $M$ has constant Jordan type if and only if $M$ has constant $j$-type for each $j, \ 1 \leq j < r$.

\begin{prop}
\label{prop:bundle-equiv}
Let $(\bG, \cE_{(r)})$  be an affine group scheme of exponential type of height $r$,
and  let $M$ be a
$\bG_{(r)}$-module.  Then for any $j, 0 < j < p$,
there are quasi-coherent sheaves ${\cKer}\{ (\tilde \Theta_{\fg/\bP^1,r,M})^j \}, \
{\cCoker}\{ (\tilde \Theta_{\fg/\bP^1,r,M})^j \}, \ {\cIm}\{ (\tilde \Theta_{\fg/\bP^1,r,M})^j \}$
on $\bP^1 \times \bP\cC_r(\cN_p(\fg))$ satisfying
\begin{enumerate}
\item
The fibers over $\langle 0, t \rangle \in \bP^1$ of these sheaves are naturally identified with 
the sheaves on $\bP\cC_r(\cN_p(\fg))$ given by the pull-backs along
$\Phi_{\fg,r}^*$ of the corresponding sheaves on $\bP V_r(\bG)$ for $\tilde \Theta_{\bG,r,M}^j$.
\item 
The fibers over $\langle s, 0 \rangle \in \bP^1$ of these sheaves are naturally identified with 
the corresponding sheaves on $\bP\cC_r(\cN_p(\fg))$ for $(\tilde \Theta_{\bG,r,M}^{exp})^j$.
\item
If $M$ is a finite dimensional $\bG_{(r)}$-module of constant $j$-rank, then 
these sheaves are  locally free, coherent sheaves
on $\cC_r(\cN_p(\fg)) \times \bP^1$. 
\end{enumerate}

In other words, for any $\bG_{(r)}$-module of
constant $j$-type, taking kernels,  cokernels, and images of 
$(\tilde \Theta_{\bG,r,M})^j$ and $(\tilde \Theta_{\fg,r,M}^{exp})^j$
yield $\bA^1$-homotopy equivalent vector bundles on $\bP\cC_r(\cN_p(\fg))$.  
\end{prop}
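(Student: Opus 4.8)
The plan is to define the three sheaves on $\bP^1 \times \bP\cC_r(\cN_p(\fg))$ as the kernel, cokernel, and image of the map of graded (bigraded) modules induced by the global section $\tilde\Theta_{\fg/\bP^1,r}$ of Definition \ref{defn:Theta/P1}. Concretely, tensoring $\Theta_{\fg/\bP^1,r} \in k[s,t]\otimes k[\cC_r(\cN_p(\fg))]\otimes k\bG_{(r)}$ with $M$ produces, exactly as in Theorem \ref{thm:global-univ}(1) and Proposition \ref{prop:proj-fg}(1), a bihomogeneous $p$-nilpotent $k[s,t]\otimes k[\cC_r(\cN_p(\fg))]$-linear operator
\[
\tilde\Theta_{\fg/\bP^1,r,M}: (k[s,t]\otimes k[\cC_r(\cN_p(\fg))]\otimes M)^\sim \ \to\ (k[s,t]\otimes k[\cC_r(\cN_p(\fg))]\otimes M)^\sim(1,p^{r-1}),
\]
and I set ${\cKer}\{(\tilde\Theta_{\fg/\bP^1,r,M})^j\}$, ${\cCoker}\{(\tilde\Theta_{\fg/\bP^1,r,M})^j\}$, ${\cIm}\{(\tilde\Theta_{\fg/\bP^1,r,M})^j\}$ to be the associated sheaves of the kernel, cokernel, image of the $j$-th iterate. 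Since $p$-nilpotency of $\Theta_{\fg/\bP^1,r}$ follows from $p$-nilpotency of $1\otimes u_{r-1}$ (its $i$-th power is $(\cU_{\ldots})_*((1\otimes u_{r-1})^i)$ up to the scalars $s,t$), these are well-defined coherent sheaves.

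Next I would verify assertions (1) and (2) by base change. Restricting the bigraded module along the closed point $\langle 0,t\rangle \in \bP^1$ (i.e. setting $s=0$) replaces $\Theta_{\fg/\bP^1,r}$ by $t\otimes\Phi_{\fg,r}^*(\Theta_{\bG,r})$, which is a unit multiple of $\Phi_{\fg,r}^*(\Theta_{\bG,r}) = \Theta_{\fg,r}$ by Theorem \ref{thm:thetas}(1); since taking kernel/cokernel/image commutes with the flat base change along a point of $\bP^1$ and with ${}^\sim$, the fiber is the pull-back along $\Phi_{\fg,r}^*$ of the corresponding sheaf on $\bP V_r(\bG)$ built from $\tilde\Theta_{\bG,r,M}^j$. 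Symmetrically, restricting along $\langle s,0\rangle$ (setting $t=0$) gives the sheaf for $(\tilde\Theta_{\fg,r,M}^{exp})^j$ on $\bP\cC_r(\cN_p(\fg))$. This is essentially bookkeeping with Proposition \ref{prop:proj-fg}(2),(3) and Proposition \ref{prop:proj-exp}(2),(3) describing restrictions on the affine opens $U_h$.

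For assertion (3), suppose $M$ has constant $j$-rank. On each affine open $U_h \times (\bP^1 \setminus Z(\ell))$ (for $h \in k[\cC_r(\cN_p(\fg))]_{p^{r-1}}$ a non-zero divisor and $\ell$ a linear form on $\bP^1$) the operator becomes, after clearing the invertible $h$ and $\ell$, an operator of the form $s\cdot\Theta_{\fg,r,M}^{exp} + t\cdot\Theta_{\fg,r,M}$ with $\Theta_{\fg,r,M} - \Theta_{\fg,r,M}^{exp}$ a sum of products of at least two $p$-nilpotent, pairwise commuting factors (as recalled in the proof of Proposition \ref{prop:linear}). By \cite[Prop 8]{CLN} — exactly the input used in Proposition \ref{prop:overP1} — the rank of the $j$-th power of this operator at a geometric point $(\ul B, \langle s,t\rangle)$ depends only on $\ul B$ (not on $\langle s,t\rangle$), hence by the constant $j$-rank hypothesis for $M$ it is constant on all of $U_h \times (\bP^1\setminus Z(\ell))$. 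A $p$-nilpotent endomorphism of a free module over a Noetherian ring whose $j$-th power has locally constant rank has locally free kernel, cokernel, and image (the standard argument: the rank function being both upper and lower semi-continuous and locally constant forces the kernel/image to be projective — this is the content of \cite[Thm 4.13]{FP3} applied in this setting), so the three sheaves are locally free coherent on $\bP^1\times\bP\cC_r(\cN_p(\fg))$. The final "$\bA^1$-homotopy equivalent vector bundles" sentence is then immediate: a locally free sheaf on $\bP^1\times\bP\cC_r(\cN_p(\fg))$ restricts to $\bA^1$-homotopy equivalent (in fact isomorphic, since $\bP^1$ is connected and the Picard group argument or the fact that vector bundles on $\bP^1_S$ pulled back from $S$ along the two sections agree) bundles over the two rational points $\langle 1,0\rangle$ and $\langle 0,1\rangle$, which by (1) and (2) are the $(\tilde\Theta_{\fg,r,M}^{exp})^j$-bundle and the $\Phi_{\fg,r}^*$-pullback of the $(\tilde\Theta_{\bG,r,M})^j$-bundle respectively.

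The main obstacle I anticipate is the passage from "rank of $j$-th power is locally constant on the product" to "kernel/cokernel/image are locally free on the product" — one must be a little careful that constancy holds across the $\bP^1$-direction and not merely fiberwise, which is precisely where Proposition \ref{prop:overP1}'s use of \cite[Prop 8]{CLN} (rank of a sum of a $p$-nilpotent operator and products of $\geq 2$ commuting $p$-nilpotents equals the rank of the single operator) is essential; everything else is a routine transcription of the arguments already carried out for $\tilde\Theta_{\bG,r,M}$ and $\tilde\Theta_{\fg,r,M}^{exp}$ in Theorem \ref{thm:restr} and \cite[Thm 4.13]{FP3}.
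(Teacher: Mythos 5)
Your construction of the three sheaves and your treatment of assertions (1) and (2) follow the paper's proof essentially verbatim (the paper likewise just restricts the bigraded module along $k[s,t]\to k(\langle a,b\rangle)$; note that restriction to a fibre is a closed immersion, not a flat base change, so the commutation of $\cKer$ and $\cIm$ with it is asserted rather than proved in both accounts). The genuine gap is in your argument for (3). You claim, citing \cite[Prop 8]{CLN}, that the rank of the $j$-th power of $s\cdot\Theta^{exp}_{\fg,r,\ul B}+t\cdot\Theta_{\fg,r,\ul B}$ depends only on $\ul B$ and not on $\langle s,t\rangle$. That result (like \cite[Thm 1.12]{FPS} and \cite[Lem 6.4]{SFB2}) only says that adding a sum of products of at least two commuting $p$-nilpotent factors preserves \emph{projectivity} --- equivalently, whether the Jordan type is the maximal possible $\frac{m}{p}[p]$ --- which is exactly why Proposition \ref{prop:overP1} is stated only for the Jordan type $\frac{m}{p}[p]$ and why Proposition \ref{prop:linear} is stated only for the maximal Jordan type. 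It does not preserve ranks of powers in general: if $x$ acts as $0$ and $y$ as multiplication on $k[y]/y^3$, then $x$ has rank $0$ while $x+y\cdot y$ has rank $1$. (Relatedly, at $\langle 1,-1\rangle$ the operator degenerates to $\Theta^{exp}_{\fg,r,\ul B}-\Theta_{\fg,r,\ul B}$, which has no linear part at all, so even the projectivity comparison is delicate there.) The paper instead proves (3) by rerunning the argument of \cite[Thm 4.13]{FP3} directly on the bigraded operator; you have correctly located the crux --- constancy of the $j$-rank across the $\bP^1$-direction --- but the tool you invoke does not deliver it, so this step needs a different justification.

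Separately, your parenthetical claim that the restrictions to $\langle 1,0\rangle$ and $\langle 0,1\rangle$ are ``in fact isomorphic'' because vector bundles on $\bP^1_S$ restricted along two sections agree is false: a vector bundle on $\bP^1\times S$ need not be pulled back from $S$, and its restrictions to two sections are in general only $\bA^1$-homotopy equivalent. Indeed, the Remark immediately following this proposition poses the existence of $\bA^1$-equivalent but non-isomorphic examples as an open question, which your claim would trivialize.
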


\begin{proof}
By Proposition \ref{prop:action} and  Theorem \ref{thm:thetas},  $\Theta_{\fg/\bP^1,r}$ of 
Definition \ref{defn:Theta/P1} is bihomogeneous
of bidegree $(1,p^{r-1})$.  Thus, the bigraded module
\begin{equation}
\label{eqn:ker-A1} 
ker\{ (\Theta_{\fg/\bP^1,r})^j: (k[s,t] \otimes k[\cC_r(\cN_p(\fg))] \otimes M)
\ \to \ (k[s,t] \otimes k[\cC_r(\cN_p(\fg))] \otimes M)[1,p^{r-1}]
\end{equation}
 determines the quasi-coherent sheaves ${\cKer}\{ (\tilde \Theta_{\fg/\bP^1,r,M})^j \}, \
{\cCoker}\{ (\tilde \Theta_{\fg/\bP^1,r,M})^j \}, \\  {\cIm}\{ (\tilde \Theta_{\fg/\bP^1,r,M})^j \}$
on $\bP^1 \times \bP \cC_r(\fg)$.  The first two assertions follow from the observation that the fiber above 
$\langle a,b \rangle \in \bP^1$ of a sheaf on $\bP^1 \times \bP\cC_r(\cN_p(\fg))$ is the sheaf associated
to the bigraded graded module (\ref{eqn:ker-A1}) obtained by extending along $k[s,t] \to 
k(\langle a, b \rangle) \in \bP^1$ which sends $s$ to $a$ and $t$ to $b$.

For $M$ a finite dimensional $\bG_{(r)}$-module of constant $j$-type, the verification of \cite[Thm 4.13]{FP3} 
showing that $\cKer\{ \tilde\Theta_{\bG,r,M}^j\}, \ \Coker\{ \tilde\Theta_{\bG,r,M}^j\}, 
\ \cIm\{ \tilde\Theta_{\bG,r,M}^j\}$ are locally free sheaves of $\bP V_r(\bG)$ applies 
equally to verify that
the sheaves $\cKer\{ (\tilde\Theta_{\fg,r,M}^{exp})^j\}, \\ \cCoker\{(\tilde\Theta_{\bG,r,M}^{exp})^j\}, \
\cIm\{ (\tilde\Theta_{\bG,r,M}^{exp})^j\}$ are locally free sheaves on $\bP\cC_r(\cN_p(\fg))$, and  applies 
 with only notational changes to show that
 ${\cKer}\{ (\tilde \Theta_{\fg/\bP^1,r,M})^j \}, \
{\cCoker}\{ (\tilde \Theta_{\fg/\bP^1,r,M})^j \}, \\  {\cIm}\{ (\tilde \Theta_{\fg/\bP^1,r,M})^j \}$
are locally free sheaves on $\bP^1\times \bP\cC_r(\cN_p(\fg))$
\end{proof}

\vskip .1in

\begin{remark}
As shown in \cite{BP}, most vector bundles on $\bP^{r-1}$ can be realized by kernel, cokernel, 
and image sheaves
associated to modules of constant Jordan type for elementary abelian $p$-groups of rank $r$.
For each such vector bundle on $\bP^{r-1}$, there is a corresponding vector bundle on $w\bP(1,p,\ldots,p^{r-1})$
associated to a module of constant Jordan type for  $\bG_{a(r)}$.
It would be interesting to find examples in the context of Proposition \ref{prop:bundle-equiv}(3)
of locally free sheaves on $\bP \cC_r(\fg)$ which are $\bA^1$-equivalent but not isomorphic.

One possible source of such examples might come from $\bG = \bG_{a(r)}$, $I \subset k\bZ/p^{\times r}$
the augmentation of $k\bZ/p^{\times r}$, and $M$ the $\bG$-module corresponding 
to the $k\bZ/p^{\times r}$-module $I^m/I^n$ with $n > m$.  In this case, we would be comparing
$\bA^1$-equivalent vector bundles on the singular variety $w\bP(1,p,\ldots,p^{r-1})$.
An approach might be to look at local invariants of these bundles at singular points of
$w\bP(1,p,\ldots,p^{r-1})$.
\end{remark}

\vskip .2in


\section{Jordan types for linear algebraic groups}
\label{sec:lin-alg-grp}

In  \cite[Defn 4.4]{F15}, we formulated the local Jordan type of  a finite dimensional rational $\bG$-module $M$ for 
$\bG$ a linear algebraic group equipped with a structure of exponential type.  
There is some subtlety to this construction in order to ``package"  the Jordan types of $M_{|\bG_{(r)}}$
of restrictions of $M$ to all Frobenius kernels of $\bG$.  This involves reordering $r$-tuples of $p$-nilpotent, 
pairwise commuting elements
$(B_0,\ldots, B_{r-1})$ of $\fg$ so that they appear as $(B_{r-1},\ldots, B_0)$.
The definition of the Jordan type of a finite dimensional rational $\bG$-module $M$ is then formulated
in terms of the action of a colimit of $p$-nilpotent operators, rather than as a colimit of Jordan 
types of restrictions $M_{|\bG_{(r)}}$ of $M$ to the Frobenius kernels $\bG_{(r)}$ of $\bG$.

In this final section, we observe that this approach leads to Jordan type functions
for finite dimensional rational $\bG$-modules $M$ provided that 
$\bG$ is of ``exponential type of infinite height".  In Definition \ref{defn:JT-Gmod}, we extend the construction of 
 $JT^{exp}_{\fg,r,M}(-):  \cC_r(N_p(\fg)) \ \to \ \cY$ of Definition \ref{defn:local-exp} for $\bG_{(r)}$-modules
  to $JT_{\bG,\infty,M}^{exp}(-):\cC_\infty(\fg) \to \cY$ for finite dimensional
 rational $\bG$-modules.   As shown in Proposition \ref{prop:stable} (correcting an error of 
 \cite[Prop 4.8]{F15}), $JT_{\bG,\infty,M}^{exp}(-)$ is a twisted stabilization of 
 $JT_{\bG,r,M_{|\bG_{(r)}}}^{exp}(-)$.

\vskip .1in

We say that an affine group scheme $\bG$ of finite type over $k$ is a linear algebraic group
if $\bG$ is reduced and irreducible.   In what follows, $k\bG \ \equiv \varinjlim_r k\bG_{(r)}$, the 
``distribution algebra" of $\bG$.  We define $\cC_\infty(\cN_p(\fg))$ to be the ind-scheme
$$\cC_\infty(\cN_p(\fg)) \ \equiv \ \varinjlim_r \cC_r(N_p(\fg)); \quad  \cC_{r}(N_p(\fg)) \to \cC_{r+1}(\cN_p(\fg)),$$
where $(B_0,\ldots, B_{r-1}) \mapsto (B_0,\ldots, B_{r-1},0).$
We define the ``coordinate algebra" 
$$ k[\cC_\infty(\cN_p(\fg))]  \quad \equiv \quad \varinjlim_r k[\cC_r(\cN_p(\fg))].$$
We define a scheme-theoretic (respectively, geometric) point of  $\cC_\infty(\cN_p(\fg))$  to be an
element in the direct limit of  scheme-theoretic (respectively, geometric) points of  $ \{ \cC_r(\cN_p(\fg)) \}$.
We can view such a point $\iota_x: \Spec K \to \cC_\infty(\cN_p(\fg))$ as given by a ``continuous" map of 
algebras $\iota_x^*: k[\cC_\infty(\cN_p(\fg))] \to K$.

\vskip .1in

We provide analogues of the constructions given in Construction \ref{construct:cU}.

\begin{defn}
\label{defn:infinite-ht}
Let $\bG$ be a linear algebraic group with Lie algebra $\fg$, equipped with a morphism
$$\cE: \bG_a \times \cN_p(\fg) \quad \to \quad \bG$$ 
such that  \ $(\bG_{(r)}, \ \cE_{|r})$ \ constitutes an affine group scheme of exponential 
type of height $r$ for each $r > 0$.
Here,  $\cE_{|r}: \bG_{a(r)} \times \cN_p(\fg) \to \bG_{(r)}$ is the restriction of $\cE$.
We say that $\bG$ is of exponential type of infinite height.

For any (geometric) point $B$ of $\cN_p(\fg)$, we denote by 
$\cE_{|r,B}: k(B) \times \bG_{a(r)} \ \to \ k(B) \otimes \bG$ the restriction of $\cE_{|r}$
along $\Spec k(B) \to \cN_p(\fg)$.

We define 
$$\cE_{\infty|r} \ = \ \cE_{|r} \circ (1\times pr_s): \bG_{a(r)} \times \cC_\infty(\cN_p(\fg)) \ \to \
\bG_{a(r)} \times \cN_p(\fg) \ \to \ \bG_{(r)}$$
and set 
$$\cU_{\fg,\infty|r} \ \equiv \ \cE_{\infty|r}\times pr_{\cC_\infty(\cN_p(\fg))}: 
\bG_{a(r)} \times \cC_\infty(\cN_p(\fg)) \ \to \ \bG_{(r)} \times \cC_\infty(\cN_p(\fg)).$$

We define the operator (a formal limit of well defined partial sums) 
\begin{equation}
\label{eqn:theta-infty}
\Theta_{\fg,\infty}^{exp} \ \equiv \ \sum_{s\geq 0} (\cU_{\fg,\infty|s})_*(1\otimes u_s) \ 
= \sum_{s\geq 0} \in \ 
k[\cC_\infty(\cN_p(\fg))] \otimes k\bG.
\end{equation}
\end{defn}

\vskip .1in

As verified in \cite[Prop 2.6]{F15}, for any linear algebraic group $(\bG,\cE)$ of exponential 
type of infinite height,  any finite dimensional $\bG$-module $M$,
any element $m \in M$, and any $B \in \cN_p(\fg)$, the element 
$(\cE_{|r,B_s})_*(1\otimes u_s)$ in $k[\cC_\infty(\cN_p(\fg))] \otimes k\bG$ acts trivially on 
$m$ for all sufficiently large $s$.  
This justifies the following proposition which should be compared with Definition \ref{defn:local-exp}.

\begin{prop}
\label{defn:Theta-formal} 
For any finite dimensional $\bG$-module $M$,  the formal sum $\Theta_{\fg,\infty}^{exp}$ of (\ref{eqn:theta-infty}) 
determines a well defined $p$-nilpotent $k[\cC_\infty(\fg)]$-linear operator 
$$\Theta_{\fg,\infty}^{exp}: k[\cC_\infty(\cN_p(\fg))] \otimes M \ \to \ k[\cC_\infty(\cN_p(\fg))] \otimes M.$$

The specialization at a scheme-theoretic point $\ul B \in \cC_\infty(\cN_p(\fg))$ of $\Theta_{\fg,\infty}^{exp}$ 
determines
the operator   
$$\Theta_{\fg,\infty,\ul B}^{exp} \ = \ \sum_{s \geq 0} (\cE_{|s,B_s})_*(1\otimes u_{s-1}): 
k[\ul B] \otimes M \ \to \ k[\ul B] \otimes M ,$$
(denoted $\sum_{s \geq 0} (\cE_{B_s})_*(u_s)$ in \cite{F15}).
\end{prop}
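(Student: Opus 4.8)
The plan is to reduce the statement to the pointwise, finite-height results already established, using the fact that the infinite objects are genuine colimits of the finite ones. First I would note that the key input is the cited result \cite[Prop 2.6]{F15}: for a fixed finite-dimensional $\bG$-module $M$ and a fixed element $m \in M$, the operators $(\cE_{|s,B_s})_*(1\otimes u_s)$ annihilate $m$ for all sufficiently large $s$. Since $M$ is finite dimensional, one can choose a single $N$ (depending on $M$ alone, by taking the max over a basis) such that, for any commutative $k$-algebra $A$ and any $\ul B \in \cC_\infty(\cN_p(\fg))(A)$, the truncated sum $\sum_{s=0}^{N-1} (\cU_{\fg,\infty|s})_*(1\otimes u_s)$ agrees with $\Theta_{\fg,\infty}^{exp}$ in its action on $A\otimes M$. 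Concretely, $\cC_\infty(\cN_p(\fg)) = \varinjlim_r \cC_r(\cN_p(\fg))$ and $k[\cC_\infty(\cN_p(\fg))] = \varinjlim_r k[\cC_r(\cN_p(\fg))]$, so any given scheme-theoretic point $\ul B$ already lands in some $\cC_r(\cN_p(\fg))$ with $r \geq N$, and on that finite stage the action of $\Theta_{\fg,\infty}^{exp}$ on $k[\cC_\infty(\cN_p(\fg))]\otimes M$ restricts to $\Theta^{exp}_{\fg,r}$ (up to the trailing zero components), which is a well-defined $k[\cC_r(\cN_p(\fg))]$-linear operator by Theorem \ref{thm:thetas}(4) and Definition \ref{defn:simplify}. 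Hence the colimit operator is well defined and $k[\cC_\infty(\cN_p(\fg))]$-linear.

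Second I would verify $p$-nilpotence. Each summand $(\cU_{\fg,\infty|s})_*(1\otimes u_s)$ is $p$-nilpotent because it is the image under the algebra map $(\cU_{\fg,\infty|s})_*$ of the $p$-nilpotent element $1\otimes u_s \in k[\cC_\infty(\cN_p(\fg))]\otimes k\bG_{a(\infty)}$ (here $u_s^p = 0$ in the group algebra of $\bG_{a}$). On any $A\otimes M$ the operator $\Theta^{exp}_{\fg,\infty}$ acts through a finite partial sum $\sum_{s=0}^{N-1}$; restricting further to a point $\ul B$ lying in stage $\cC_r(\cN_p(\fg))$ this is exactly $\Theta^{exp}_{\fg,r,\ul B}$, which is $p$-nilpotent by Theorem \ref{thm:thetas}(4)--(5). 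Since $p$-nilpotence can be checked after extension of scalars to a covering family of such points (equivalently, it is the statement that a certain polynomial map of coordinate rings has image in a nilpotent ideal, visible already at each finite stage), this gives the claim; alternatively one invokes \cite[Thm 1.12]{FPS} / \cite[Prop 8]{CLN} as in the proof of Proposition \ref{prop:linear} to compare $\Theta^{exp}_{\fg,\infty,\ul B}$ with the (manifestly $p$-nilpotent) honest product exponential.

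Third, for the formula for the specialization: given a scheme-theoretic point $\ul B = (B_0, B_1, \ldots) \in \cC_\infty(\cN_p(\fg))$ with residue field $k(\ul B)$, the point factors through some finite $\cC_r(\cN_p(\fg))$, i.e. $B_s = 0$ for $s \geq r$, and by the discussion above $(\cE_{|s, B_s})_*(1\otimes u_{s})$ acts as zero on $k(\ul B)\otimes M$ for $s$ beyond some bound. So $\Theta^{exp}_{\fg,\infty,\ul B}$ acting on $k(\ul B)\otimes M$ is the finite sum $\sum_{s\geq 0}(\cE_{|s,B_s})_*(1\otimes u_s)$; identifying this with the specialization of the colimit operator is then just the compatibility of specialization with the colimit structure, exactly as in Theorem \ref{thm:thetas}(5). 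The naming convention $\sum_{s\geq 0}(\cE_{B_s})_*(u_s)$ from \cite{F15} is then recorded as a translation of notation, noting the index shift relative to Definition \ref{defn:local-exp}.

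The main obstacle I anticipate is purely bookkeeping rather than conceptual: one must be careful that the truncation bound $N$ beyond which the summands vanish on $M$ is uniform enough to make ``$\Theta^{exp}_{\fg,\infty}$ acts on $k[\cC_\infty(\cN_p(\fg))]\otimes M$'' a literal finite-sum statement over the ind-scheme (not merely a pointwise one), and that the grading/indexing of the $u_s$ here — with $u_s$ dual to $t^{p^s}$, so that the height-$r$ piece uses $u_{r-s-1}$ but the infinite version uses $u_s$ directly because of the reordering $(B_0,\ldots,B_{r-1})\mapsto(B_{r-1},\ldots,B_0)$ discussed at the start of the section — is tracked consistently. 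Once the ``eventually zero'' input from \cite[Prop 2.6]{F15} is invoked, everything else follows by transporting the finite-height assertions of Theorem \ref{thm:thetas} through the colimit.
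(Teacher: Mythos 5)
Your proposal is correct and follows essentially the same route as the paper, which justifies the proposition solely by the citation of \cite[Prop 2.6]{F15} (the ``eventually zero'' statement) together with finite-dimensionality of $M$ and the finite-height results of Theorem \ref{thm:thetas}; your elaborations (uniform truncation bound over a basis, reduction of $p$-nilpotence and of the specialization formula to a finite stage of the colimit) are exactly the details the paper leaves implicit. The indexing caveat you flag at the end is also the right thing to watch, and matches the reordering discussion at the start of Section \ref{sec:lin-alg-grp}.
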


\vskip .1in

The following definition of $JT_{\bG,\infty,M}(\ul B)$ is essentially that of \cite[Defn 4.4]{F15}.

\begin{defn}
\label{defn:JT-Gmod} 
For any finite dimensional $\bG$-module $M$, 
we define  the Jordan type of $M$ at $\ul B \in \cC_\infty(\fg)$ to be 
\begin{equation}
\label{eqn:JT-infty}
JT_{\fg,\infty,M}^{exp}(\ul B) \quad \equiv \quad 
JT(\sum_{s \geq 0} (\cE_{|s,B_s})_*(1\otimes u_s),k(\ul B)\otimes M),
\end{equation}
the Jordan type of the $p$-nilpotent action of \ $ k(\ul B) 
\otimes_{k[\cC_\infty(\cN_p(\fg))]} \Theta_{\fg,\infty}^{exp} \in k(\ul B)\otimes k\bG$ \
acting on $k(\ul B)\otimes M$.

Thus, (\ref{eqn:JT-infty}) determines the Jordan type function
$$JT_{\fg,\infty,M}^{exp}(-):  \ \cC_\infty(\cN_p(\fg)) \quad \to \quad \cY.$$
\end{defn}

\vskip .1in 

We point out that \cite[Prop 4.8]{F15} is wrong as stated.  The following proposition provides a correction.
The proof of Proposition \ref{prop:stable} is an immediate consequence of the observation
above that $(\cE_{|r,B_s})_*(1\otimes u_s)$ in $k[\cC_\infty(\fg)] \otimes k\bG$ acts trivially on $m \in M$ 
 for all sufficiently large $s$.

\begin{prop}
\label{prop:stable}
Adopt the notation and hypotheses of Definition \ref{defn:JT-Gmod}.  If $M$ is finite dimensional, 
then 
$$JT_{\bG,\infty,M}^{exp}(-) \ = \ JT_{\bG,r,M_{|\bG_{(r)}}}^{exp}(-) \circ  \Lambda^r:  \  \cC_\infty(\cN_p(\fg))
\ \to \ \cC_r(\cN_p(\fg)) \ \to \ \cY, \quad r \gg 0,$$
where $\Lambda^r: \cC_\infty(\cN_p(\fg)) \to \cC_r(\cN_p(\fg))$ is given by 
sending $(B_0, \ldots, B_n,\ldots )$ to $(B_{r-1},\ldots,B_0)$.

In particular, $JT_{\bG,\infty,M}(-)$ is continuous.
\end{prop}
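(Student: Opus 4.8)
The plan is to show that, for a \emph{fixed} finite dimensional $\bG$-module $M$, the formal operator $\Theta_{\fg,\infty}^{exp}$ truncates uniformly, so that the infinite-height Jordan type function collapses onto a single finite-height one.

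First I would record the uniform truncation bound underlying Proposition \ref{defn:Theta-formal}: there is an integer $r_0 = r_0(M)$ such that for every $s \geq r_0$ and every commutative $k$-algebra $A$ the operator $(\cE_{\infty|s})_*(1\otimes u_s)$ acts as zero on $A\otimes M$. Indeed, choosing a closed embedding $\rho\colon \bG\hookrightarrow GL_N$ with $N=\dim M$ realizing $M$, the morphism $\rho\circ\cE\colon \bG_a\times\cN_p(\fg)\to GL_N$ is given on coordinate algebras by polynomials $(\rho\circ\cE)^*(X_{i,j})\in k[t]\otimes k[\cN_p(\fg)]$ of some bounded degree $d$ in $t$; since $(\cE_{|s,B})_*(1\otimes u_s)$ reads off the coefficient of $t^{p^s}$ in the $\cE_{|s,B}$-image, it annihilates $M$ once $p^s>d$, uniformly in the point $B$. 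Set $r_0$ with $p^{r_0}>d$.

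Next, fix any $r\geq r_0$ and any scheme-theoretic point $\ul B=(B_0,B_1,\dots)\in\cC_\infty(\cN_p(\fg))$. By Step 1 the specialization $\Theta_{\fg,\infty,\ul B}^{exp}$, acting on $k(\ul B)\otimes M$, equals the finite sum $\sum_{s=0}^{r-1}(\cE_{|s,B_s})_*(1\otimes u_s)$: the slots $B_s$ with $s\geq r$ contribute nothing. Unwinding $\cE_{r|s}$ into its Frobenius-twisted constituents as in Construction \ref{construct:cU} and reversing the indexing via $\Lambda^r$, this operator is identified, as an endomorphism of $k(\ul B)\otimes M$, with the operator $\Theta_{\fg,r,\Lambda^r(\ul B)}^{exp}$ that defines $JT^{exp}_{\fg,r,M_{|\bG_{(r)}}}(\Lambda^r(\ul B))$ in Definition \ref{defn:local-exp}. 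Equal operators have equal Jordan types, so $JT^{exp}_{\bG,\infty,M}(\ul B)=JT^{exp}_{\bG,r,M_{|\bG_{(r)}}}(\Lambda^r(\ul B))$; as $\ul B$ and $r\geq r_0$ were arbitrary, this gives the asserted equality of functions for all $r\gg 0$. For continuity, note that $\Lambda^r\colon \cC_\infty(\cN_p(\fg))\to\cC_r(\cN_p(\fg))$ is a morphism of (ind-)schemes — on coordinate algebras it is the reindexing/zero-padding map $k[\cC_r(\cN_p(\fg))]\to k[\cC_\infty(\cN_p(\fg))]$ — hence continuous, while $JT^{exp}_{\fg,r,M_{|\bG_{(r)}}}(-)\colon\cC_r(\cN_p(\fg))\to\cY$ is continuous by Theorem \ref{thm:cont} applied to the $p$-nilpotent operator $\Theta_{\fg,r}^{exp}$ on the free $k[\cC_r(\cN_p(\fg))]$-module $k[\cC_r(\cN_p(\fg))]\otimes M$; composing gives continuity of $JT^{exp}_{\bG,\infty,M}(-)$.

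The main obstacle is the bookkeeping in the third paragraph: one must check that the order-reversal $\Lambda^r$, together with the Frobenius twists built into $\cE_{r|s}$, exactly matches the ``ascending'' indexing $B_s\mapsto u_s$ used to define $\Theta_{\fg,\infty}^{exp}$. This is the ``subtlety'' in packaging the restrictions $M_{|\bG_{(r)}}$ flagged at the start of the section, and is precisely where \cite[Prop 4.8]{F15} went wrong by omitting the reversal; everything else is formal once the uniform truncation bound is in hand.
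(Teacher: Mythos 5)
Your proof is correct and follows the paper's own (very terse) argument: the paper likewise deduces the proposition immediately from the observation that $(\cE_{|s,B_s})_*(1\otimes u_s)$ annihilates a fixed finite dimensional $M$ for all sufficiently large $s$, so the formal sum defining $\Theta_{\fg,\infty}^{exp}$ truncates and, after the order-reversal $\Lambda^r$, coincides with the operator defining $JT^{exp}_{\fg,r,M_{|\bG_{(r)}}}$, with continuity then inherited from the finite-height case. The only cosmetic slip is that the representation $\rho:\bG \to GL_N$ afforded by $M$ need not be a closed embedding, but your degree-bound argument for the uniform truncation only uses that $\rho\circ\cE$ is a morphism of schemes, so nothing is lost.
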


\vskip .1in

One can interpret the Proposition \ref{prop:stable} as saying for $M$ a finite dimensional 
$\bG$-module that the Jordan type function $JT_{\bG,\infty,M}^{exp}(-)$ is a twisted stabilization
 (with respect to increasing $r$) of the exponential Jordan type functions $JT_{\bG,r,M_{|\bG_{(r)}}}^{exp}(-)$ 
 of Definition {\ref{defn:local-exp}.

\vskip .2in



\begin{thebibliography}{20} 

\bibitem{Alex} P. Alexandroff,  {\em Diskrete Räume}, Mat. Sb. New Series {\bf 22} (1937), 501–518.

\bibitem{B-R} M. Beltrametti, L. Robbiano, {\em Introduction to the theory of weighted projective spaces}, 
Expo. Math {\bf 4} (1986), 111 - 162.


%

\bibitem{BIKP}, D.J Benson, S. Iyengar, H. Krause, J. Pevtsova, {Stratification for module categories of
finite group schemes}, J. Amer. Math Soc {\bf 31} (2018), 265 - 302.

\bibitem{BP}  D. J. Benson, J. Pevtsova, {\em A realization theorem for modules of constant Jordan type and 
vector bundles}, Trans. Amer. Math Soc. {\bf 364} (2012), 6459 - 6478.





\bibitem{C} J. Carlson, {\em The varieties and the cohomology ring of a module}, J. Algebra
{\bf 85} (1983), 104 -143.

\bibitem{CF} J. Carlson, E. Friedlander, {\em Exact category of modules of constant Jordan type}, 
Progr. Math {\bf 269} (2009), 267 -- 290.

\bibitem{CFP} J. Carlson, E. Friedlander, J. Pevtsova, {\em Modules of constant Jordan type}, 
Journal f\'ur die reine und angewandte Mathematik {\bf 614} (2008), 191 -- 234.

\bibitem{CLN} J. Carlson, Z. Lin, D. Nakano, {\em Support varieties for modules over Chevalley groups
and classical Lie algebras}, Trans. A.M.S. {\bf 360} (2008), 1870 - 1906.

\bibitem{Dolg} I. Dolgachev, {\em Weighted projective varieties}, in Group actions and vector fields,
Lecture Notes in Math {\bf 956} (1982), 34 - 71.
%

%




%

\bibitem{Eisen} D. Eisenbud, {\em Commutative algebera with a view towards algebraic geometry}, 
Graduate Texts in Math {\bf 150}, Springer-Verlag, 1994.

%

\bibitem{F91} E. Friedlander, {\em Algebraic cycles, Chow varieties, and Lawson homology},  Compositio Math {\bf 77} 
(1991), 55 -  93

\bibitem{F15} E. Friedlander, {\em Support varieties for rational representations,} Compositio Math {\bf 151} 
(2015), 765-792.
%

%

%

\bibitem{F23} E. Friedlander, {\em Support varieties and stable categories for algebraic groups},
Compositio Math. {\bf 159} (2023), 746 - 779.




\bibitem{FP1} E. Friedlander, J. Pevtsova, {\em $\Pi$-supports for modules 
for finite group schemes},  Duke. Math. J. {\bf 139} (2007), 317--368.


\bibitem {FP2} E. Friedlander, J. Pevtsova, Generalized support varieties for finite group schemes,
Documenta Mathematica - Extra volume Suslin (2010), 197-222.

\bibitem{FP3} E. Friedlander, J. Pevtsova, {\em Constructions for infinitesimal group schemes}, Trans. Amer. 
Math. Soc. {\bf 363} (2011), 6007 -- 6076.



\bibitem{FPS} E. Friedlander, J. Pevtsova, A. Suslin, {\em Generic and maximal Jordan types}, Invent. Math {\bf 168} (2007),
485 -- 522.


\bibitem{FS} E. Friedlander, A. Suslin, {\em Cohomology of finite group schemes over a field}, Invent. Math. 
{\bf 127} (1997), 209-270.



\bibitem{Har} R. Hartshorne, {Algebraic Geometry}. Grad. Texts in Math {\bf 52}, Springer-Verlag 1977.


%
%

\bibitem{J} J.C. Jantzen, {\em Representations of Algebraic groups -- 2nd ed}, Mathematical Surveys and 
Monographs {\bf 107}, Amer. Math Soc., 2003.

\bibitem{John} P.T. Johnstone, {\em Stone spaces (1st paperback ed)}, Cambridge University Press, 1986.

\bibitem{Lang}  S. Lang, {\em Algebra}, Graduate Texts in Math {\bf 211}, Springer-Verlag, 2002.



%
%
%
%

\bibitem{Ngo} N. Ngo, {On nilpotent commuting varieties and cohomology of Frobenius kernels}, 
J. Algebra {\bf 425} (2015), 65 - 84.




\bibitem{Sob} P. Sobaje,  {\em Support varieties for Frobenius kernels of classical groups}, 
J. Pure Appl Algebra {\bf 216} (2013), 2657 - 2664.

\bibitem{Sob2} P. Sobaje,  {\em On exponentiation and infinitesimal one-parameter subgroups
of reductive groups}, J. Algebra {\bf 385} (2013), 14 - 26.




\bibitem{SFB1} A. Suslin, E. Friedlander, C. Bendel,
{\em Infinitesimal 1-parameter subgroups and cohomology},
J. Amer. Math. Soc. {\bf 10} (1997), 693-728.

\bibitem{SFB2} A. Suslin, E. Friedlander, C. Bendel,
{\em Support varieties for infinitesimal group schemes},
J. Amer. Math. Soc. {\bf 10} (1997), 729-759.






\end{thebibliography}
\end{document}